\documentclass[a4paper,11pt,french,english]{amsart}
\usepackage[english]{babel}
\usepackage[utf8]{inputenc}
\usepackage{lmodern}
\usepackage[T1]{fontenc}
\usepackage{amsmath,amsthm,amssymb,amsfonts}
\usepackage{amscd}
\usepackage{pstricks}
\usepackage{pst-node}
\usepackage[a4paper,left=3.4cm,right=3.4cm,top=4cm,bottom=4cm]{geometry}
\usepackage{enumerate}
\usepackage{enumitem}
\usepackage[linktocpage=true]{hyperref}
\usepackage{url}
\usepackage{csquotes}

\newcommand{\Z}{\mathbb{Z}}

\newcommand{\N}{\mathcal{N}}
\newcommand{\C}{\mathcal{C}}
\newcommand{\F}{\mathcal{F}}

\newcommand{\cl}{\mathrm{cl}}

\newcommand{\sub}{\ensuremath{\operatorname{Sub}}}%

\renewcommand{\H}{\ensuremath{\mathcal{H}}}%
\newcommand{\A}{\ensuremath{\mathcal{A}}}%
\newcommand{\env}{\ensuremath{\mathrm{Env}}}%

\title[]{On closure operations in the space of subgroups and applications}
  \date{June 9, 2025}	

\author{Dominik Francoeur}
\address{Universidad Autónoma de Madrid, C/ Francisco Tomás y Valiente 7, 28049 Madrid, Spain}
\email{dominik.francoeur@uam.es}

\author{Adrien Le Boudec}
\address{CNRS, UMPA - ENS Lyon, 46 all\'ee d'Italie, 69364 Lyon, France}
\email{adrien.le-boudec@ens-lyon.fr}

\thanks{This work had been initiated within the framework of the Labex Milyon (ANR-10- LABX-0070) of Universite de Lyon, within the program "Investissements d'Avenir" (ANR-11-IDEX-0007) operated by the French National Research Agency (ANR)}


\theoremstyle{plain}
\newtheorem{thm}{Theorem}[section]
\newtheorem{prop}[thm]{Proposition}
\newtheorem{cor}[thm]{Corollary}
\newtheorem{lem}[thm]{Lemma}

\newtheorem{thm-intro}{Theorem}
\newtheorem{prop-intro}{Proposition}
\newtheorem{cor-intro}{Corollary}

\theoremstyle{definition}
\newtheorem{defi}[thm]{Definition}

\newtheorem{ex}[thm]{Example}
\newtheorem{rmq}[thm]{Remark}

\begin{document}

\maketitle

\begin{abstract}
We establish some interactions between uniformly recurrent subgroups (URSs) of a group $G$ and cosets topologies $\tau_\mathcal{N}$ on $G$ associated to a family $\N$ of normal subgroups of $G$. We show that when $\N$ consists of finite index subgroups of $G$, there is a natural closure operation $\H \mapsto \mathrm{cl}_\mathcal{N}(\H)$ that associates to a URS $\H$ another URS $\mathrm{cl}_\mathcal{N}(\H)$, called the $\tau_\mathcal{N}$-closure of $\H$. We give a  characterization of the URSs $\H$ that are $\tau_\mathcal{N}$-closed in terms of stabilizer URSs. This has consequences on arbitrary URSs when $G$ belongs to the class of groups for which every faithful minimal profinite action is topologically free. We also consider the largest amenable URS $\A_G$, and prove that for certain coset topologies on $G$, almost all subgroups $H \in \A_G$ have the same closure. For groups in which amenability is detected by a set of laws (a property that is variant of the Tits alternative), we deduce a criterion for $\A_G$ to be a singleton based on residual properties of $G$. 

\textbf{Keywords}: profinite topology and other coset topologies, space of subgroups, uniformly recurrent subgroups, minimal actions on compact spaces, proximal and strongly proximal actions, C*-simplicity.
\end{abstract}

\section{Introduction}

Let $G$ be a group\footnote{The main situation we have in mind is when $G$ is countable. Some of our results will require this countability assumption.}. We denote by $\N_G$ the set of normal subgroups of $G$. Let $\N \subseteq \N_G$ be a family of normal subgroups of $G$ that is filtering: for every $N_1,N_2 \in \mathcal{N}$ there exists $N_3 \in \mathcal{N}$ such that $N_3 \leq N_1 \cap N_2$. There is a  group topology  $\tau_\mathcal{N}$ on $G$ associated to $\N$, defined by declaring that the family of cosets $g N$, $g \in G$, $N \in \mathcal{N}$, forms a basis for $\tau_\mathcal{N}$. When $\mathcal{N}$ is the family of all  finite index normal subgroups of $G$, $\tau_\mathcal{N}$ is  the profinite topology on $G$. If $p$ is a prime and $\mathcal{N}$ is the family of finite index normal subgroups $N$ of $G$ such that $G/N$ is a $p$-group, $\tau_\mathcal{N}$ is the pro-$p$ topology.

If $H$ is a subgroup of $G$, the closure of $H$ with respect to $\tau_\mathcal{N}$ is denoted by $\mathrm{cl}_{\mathcal{N}}(H)$. In the case of the profinite topology, we use the shorter notation $\mathrm{cl}(H)$. The closure operation defines a map  \[ \mathrm{cl}_\mathcal{N} :  \sub(G) \to \sub(G), \, H \mapsto \mathrm{cl}_\mathcal{N}(H). \] Here $\sub(G)$ is the set of subgroups of $G$. That set is equipped with the topology inherited from the set $\left\lbrace 0,1\right\rbrace ^G$ of all subsets of $G$, equipped with the product topology. The space $\sub(G)$ is a compact space. The group $G$ acts on $\sub(G)$ by conjugation, and this action is by homeomorphisms. The first object of study of this article is the behaviour of the map $\mathrm{cl}_\mathcal{N}$ with respect to the dynamical system $ G \curvearrowright \sub(G)$. 

It follows from the definitions that the map $\mathrm{cl}_\mathcal{N}$ is always increasing, idempotent, and $G$-equivariant. In general $\mathrm{cl}_\mathcal{N}$ is far from being continuous. This failure of continuity already happens in the most classical case where $\tau_\mathcal{N}$ is the profinite topology. An elementary example illustrating this is the group $G = \Z[1/p]$  of $p$-adic rational numbers, for which the map $\mathrm{cl}$ is not upper semi-continuous on $\sub(G)$ (see Remark  \ref{rmq-p-adic-rationals}). Another example is $G = F_k$ (a finitely generated non-abelian free group of rank $k$). M. Hall showed that every finitely generated subgroup $H$ of $F_k$ verifies $\mathrm{cl}(H) = H$ \cite{Hall-coset-rep} (i.e.\ $F_k$ is a LERF group). Since finitely generated subgroups always form a dense subset in the space of subgroups, it follows that $\mathrm{cl}$ is the identity on a dense set of points. However $\mathrm{cl}$ is not the identity everywhere, for instance because $F_k$ admits infinite index subgroups $H$ such that $\mathrm{cl}(H) = F_k$ (e.g.\ any infinite index maximal subgroup). So $\mathrm{cl}$ is not lower semi-continuous on $\sub(F_k)$. 

The starting result of this article is that if we restrict to minimal subsystems of $\sub(G)$ (i.e.\ non-empty closed minimal $G$-invariant subsets of $\sub(G)$), the situation is better behaved. Recall that a minimal subsystem $\H \subset \sub(G)$ is called a URS (Uniformly Recurrent Subgroup) \cite{GW-urs}.   

\begin{prop-intro} \label{prop-intro-usc}
Let $\N \subseteq \N_G$ be a family of finite index normal subgroups of $G$, and let $\H$ be a URS of $G$. Then the following hold: \begin{enumerate}
	\item \label{item-usc-intro} The restriction ${\mathrm{cl}_\mathcal{N}}_{| \H}: \H \to \sub(G)$ is upper semi-continuous.
	\item \label{item-closure-intro} There exists a  unique URS contained in $\overline{\left\lbrace \mathrm{cl}_\N(H) : H \in \H \right\rbrace }$, denoted  $\mathrm{cl}_\mathcal{N}(\H)$, and called the $\tau_\N$-closure of $\H$. 
\end{enumerate}
\end{prop-intro}

The proposition  also holds in a more general situation not necessarily requiring that  $\N$ consists of finite index subgroups of $G$ (see Proposition  \ref{prop-Nclosure-usc}).

Statement (\ref{item-closure-intro}) says that there is a natural closure operation  \[ \mathrm{URS}(G) \to\mathrm{URS}(G) , \, \H \mapsto \mathrm{cl}_\mathcal{N}(\H), \] where $ \mathrm{URS}(G) $ is the set of URSs of the group $G$. We say that a URS $\H$ is closed for the topology $\tau_\N$ if $\mathrm{cl}_\N(\H) = \H$. When $G$ is a countable group, this happens if and only if there is a dense $G_\delta$-set of points $H \in \H$ such that $H$ is closed for the topology $\tau_\N$.

Recently URSs were studied and appeared in a large amount of works, including \cite{LBMB-subdyn,Bou-Houd,Fra-Gel,LBMB-growth-solv}. We refer notably to the introduction of \cite{LBMB-growth-solv} for more references. A common theme is to establish rigidity results  saying that the set of URSs of certain groups is  restricted, or to establish connections between certain group theoretic properties  of the ambient group and properties of its URSs. We believe that in certain situations the above process $\H \mapsto  \mathrm{cl}_\N(\H)$, and more generally the consideration of coset topologies on the ambient group, can be profitably used to study properties of  URSs. In Sections  \ref{sec-prof-closure} and  \ref{sec-A_G} we exhibit situations where it is indeed the case. In the remainder of this introduction we shall describe these results. 

When $\N$ consists of finite index subgroups, the property that a URS $\H$ is closed for the topology $\tau_\N$ admits the following natural characterization. Glasner--Weiss showed that to every minimal action of $G$ on a compact space $X$, there is a naturally associated URS of $G$, called the stabilizer URS of $X$, and denoted $S_G(X)$ \cite{GW-urs}. We say that the action of $G$ on a compact space $X$ is pro-$\N$ if $G \times X \to X$ is continuous, where $G$ is equipped with the topology $\tau_\N$ (see Proposition  \ref{prop-caract-pro-N-space} for characterizations of this property). 

\begin{prop-intro} \label{prop-intro-profinitelyclosed-caract}
Suppose that $G$ is a countable group and that $\N$ consists of finite index subgroups of $G$. For a URS $\H$ of $G$, the following are equivalent: \begin{enumerate}
		\item \label{item-intro-H-closed} $\H$ is closed for the topology $\tau_\N$. 
		\item \label{item-intro-H-comes-pro}  There exists a pro-$\N$ compact minimal $G$-space $X$ such that $S_G(X) = \H$. 
	\end{enumerate} 
\end{prop-intro}

 In the case of the profinite topology, the notion of pro-$\N$ $G$-space coincides with the classical notion of profinite $G$-space.  So in that situation the above proposition says that a URS $\H$ is closed for the profinite topology if and only if $\H$ is the stabilizer URS associated to a minimal profinite action of $G$. Consequences on all URSs can be drawn out of this when $G$ belongs to the class of groups for which, for a faithful minimal compact $G$-space, profinite implies topologically free.  See Proposition \ref{prop-PIF-unfaithful}. This class of groups includes non-abelian free groups, and more generally any group $G$ admitting an  isometric action on a hyperbolic space with unbounded orbits such that the $G$-action on its limit set is faithful. It also includes  hereditarily just-infinite groups. Recall that a group $G$ is just-infinite if $G$ is infinite and $G/N$ is finite for every non-trivial normal subgroup $N$, and $G$ is hereditarily just-infinite  if every finite index subgroup of $G$ is just-infinite. We call a subgroup $H$ of $G$ co-finitely dense in $G$ for the profinite topology if the profinite closure of $H$ has finite index in $G$. For hereditarily just-infinite groups we obtain:

\begin{prop-intro} \label{prop-intro-HJI}
	Let $G$ be a hereditarily just-infinite group, and let $\H$ be a non-trivial URS of $G$. Then for every $H \in \H$, $H$ is co-finitely dense in $G$ for the profinite topology.
\end{prop-intro}

In cases where we know a priori that the group $G$ has the property that the only subgroups that are co-finitely dense are the finite index subgroups, we deduce that such a group $G$ admits no continuous URS (a URS is continuous if it is not a finite set). See Corollary \ref{cor-HJI-noURS}, and the surrounding discussion for context and examples.

Another setting in which we show that the consideration of a coset topology $\tau_\N$ is fruitful with respect to the study of URSs is the case amenable URSs. A URS $\H$ is amenable if it consists of amenable subgroups. Every  group $G$ admits a largest amenable URS (with respect to a natural partial order), which is the stabilizer URS associated to the action of $G$ on its Furstenberg boundary (the largest minimal and strongly proximal compact $G$-space). This URS is denoted $\A_G$ and is called the Furstenberg URS of $G$. The action of $G$ on $\A_G$ is minimal and strongly proximal. $\A_G$ is either a singleton, in which case we have $\A_G = \left\{\mathrm{Rad}(G)\right\}$, where $\mathrm{Rad}(G)$ is the amenable radical of $G$, or $\A_G$ is continuous. We refer to \cite{LBMB-subdyn} for a more detailed discussion.

Let $\F$ denote the class of groups $G$ such $\A_G$ is a singleton. Equivalently, $G$ belongs to $\F$ if and only if every amenable URS of $G$ lives inside the amenable radical of $G$. The class $\F$ is known to be very large. It plainly contains amenable groups. It also contains all linear groups, all groups with non-vanishing $\ell^2$-Betti numbers, all hyperbolic groups, and more generally all acylindrically hyperbolic groups. We refer to \cite{BKKO} for references and details. Examples of groups outside the class $\F$ have been given in \cite{LB-C*simple}. 

The following result provides a criterion for a group to be in $\F$ that is based on residual properties of the group.  If $\C$ is a class of groups, a group $G$ is residually-$\C$ if the intersection of all normal subgroups $N$ such that $G/N \in \C$ is trivial.

\begin{thm-intro} \label{thm-intro-solvable-case}
	Let $G$ be a group such that every amenable subgroup of $G$ is virtually solvable. If $G$ is residually-$\F$, then $G$ is in $\F$. 
\end{thm-intro}

We point out that this theorem is applicable without necessarily relying on other methods related to $\F$ to verify the assumption that the group is residually-$\F$. The point is that the statement applies provided that $G$ is residually-$\C$ for some subclass $\C$ of $\F$ that is potentially much smaller. For instance the theorem applies and is already interesting if $G$ is residually finite. 

Every group satisfying the Tits alternative has the property that every amenable subgroup is virtually solvable. We refer to \S \ref{subsec-proof-Thm1} for examples of groups that are known to satisfy the Tits alternative. 

One interest of such a statement is that it is based on intrinsic algebraic properties of the group. It does not require the group $G$ to admit a rich action of geometric flavour, or to have an explicit minimal and strongly proximal compact $G$-space at our disposal. The residual properties are used as a tool in Theorem \ref{thm-intro-solvable-case}, but the confrontation of residual properties and the class $\F$ is also motivated by the fact that it is not known whether there exist residually finite groups $G$ with trivial amenable radical such that $G$ does not belong to $\F$. The groups from \cite{LB-C*simple} are never residually finite (and some of them are virtually simple). 

As an application, Theorem \ref{thm-intro-solvable-case} allows  to recover the following result from \cite{BKKO}:

\begin{cor-intro}[Breuillard--Kalantar--Kennedy--Ozawa] \label{cor-linear-F}
	If $G$ is a linear group, then $G$ is in $\F$. 
\end{cor-intro}

 The proof from \cite{BKKO} relies on linear group technology. Here the argument to deduce Corollary \ref{cor-linear-F} from Theorem \ref{thm-intro-solvable-case} uses a reduction to the case of finitely generated groups, and then only appeals to Malcev's theorem that finitely generated linear groups are residually finite, and the Tits alternative \cite{Tits72}.

The consideration of the class $\F$  is also motivated by the result of Kalantar--Kennedy that a group $G$ belongs to $\F$ if and only if the quotient of $G$ by its amenable radical is a $C^\ast$-simple group (that is, its reduced $C^\ast$-algebra is simple) \cite{KK}. We refer to the survey of de la Harpe \cite{dlHarpe} for an introduction and  historical developments on $C^\ast$-simple groups, and to the Bourbaki seminar of Raum for recent developments \cite{Raum-Bourbaki}. Hence using the result of Kalantar--Kennedy, Theorem \ref{thm-intro-solvable-case} can be reinterpreted as a criterion to obtain $C^\ast$-simplicity (under the assumption on amenable subgroups) based on residual properties of the group. See Corollary \ref{cor-Csimple}.  We are not aware of other results of this kind. 

The proof of Theorem \ref{thm-intro-solvable-case} is based on the following proposition, of independent interest. Given a group $G$, we denote by $\N_G(\F)$ the set of normal subgroups of $G$ such that $G/N \in \F$. The set $\N_G(\F)$ is stable under taking finite intersections (Lemma  \ref{lem-coset-furst-topology}), and we can consider  the coset topology on $G$ associated to $\N_G(\F)$ (and more generally to a subset $\N \subseteq \N_G(\F)$). The following result says that within the Furstenberg URS $\A_G$, almost all points have the same closure for such a topology (for technical reasons we are led to make some countability assumptions). 

\begin{prop-intro} \label{prop-intro-pro-F-closure-AG}
	Let $G$ be a countable group, and let $\N$ be a countable subset of $\N_G(\F)$. Then there exists a normal subgroup $M$ of $G$ and a comeager subset $\H_0 \subseteq \A_G$ such that $\cl_\N(H) = M$ for every $H \in \H_0$.
\end{prop-intro}

The proof of the proposition makes crucial use of the strong proximality of the action of $G$ on $\A_G$.  The proof of Theorem \ref{thm-intro-solvable-case}  is easily deduced from the proposition. The additional point is to ensure that the closed normal subgroup $M$ appearing in the conclusion of the proposition remains amenable, and this is where the two assumptions in the theorem are used. We refer to Section  \ref{sec-A_G} for details. Here we only mention that the actual setting in which we prove Theorem \ref{thm-intro-solvable-case} does not necessarily require amenable subgroups to be virtually solvable. The assumption that we need is that amenability within subgroups of $G$ can be detected by a set of laws (Definition \ref{defi-laws-detect}), a property that can be thought of as a version of the Tits alternative (Proposition \ref{prop-Tits-law-detects}). See Theorem  \ref{thm-solvable-case-bis} for the more general formulation of the theorem. 

\bigskip

\textbf{Acknowledgements.} Thanks are due to Uri Bader and Pierre-Emmanuel Caprace. We can trace back that the possibility of using Proposition \ref{prop-usc} specifically in the space of subgroups to build a  URS starting from another one and a semi-continuous map had been originally brought to our attention by them several years ago. We also tank a referee who provided various comments that improved the exposition of the paper. 

\section{Preliminaries}

A space $X$ is a $G$-space if $G$ admits a continuous action $G \times X \rightarrow X$. Throughout the paper we make the standing assumption that $G$-spaces are non-empty.  The action (or the $G$-space $X$) is \textbf{minimal} if all orbits are dense. For $x \in X$ we write $G_x$ for the stabilizer of $x$ in $G$, and $G_x^0$ for the set of  $g \in G$ such that $g$ acts trivially on a neighbourhood of $x$.  The action of $G$ on $X$ is free if $G_x = \left\lbrace 1 \right\rbrace $ for every $x \in X$, and \textbf{topologically free} if $G_x^0 = \left\lbrace 1 \right\rbrace $ for every $x \in X$.

Let $X,Y$ be compact spaces. A continuous surjective map $\pi: Y \to X$  is called \textbf{irreducible} if every proper closed subset of $Y$ has a proper image in $X$. If $X,Y$ are compact $G$-spaces and $\pi: Y \to X$ is a continuous surjective $G$-equivariant map, we say that $X$ is a factor of $Y$, and that $Y$ is an extension of $X$. When $\pi: Y \to X$ is  irreducible, we also say that $Y$ is an irreducible extension of $X$. If $\pi: Y \to X$ is  irreducible, then $X$ is minimal if and only if $Y$ is minimal. Also for $X,Y$ minimal, $\pi: Y \to X$ is irreducible if and only if it is \textbf{highly proximal}: for every $x \in X$ the fiber $\pi^{-1}(x)$ is compressible  \cite{AG-distal}.

	\subsection{Semi-continuous maps} \label{sem-cont-ext}
	
	 If $Y$ is a locally compact space, we denote by $2^Y$ the space of closed subsets of $Y$, endowed with the Chabauty  topology. The space $2^Y$ is compact.
	
	Let $X$ be a compact $G$-space. A map $\varphi \colon X \to 2^Y$ is \textbf{upper semi-continuous} if for every compact subset $K$ of $Y$, $\left\lbrace x \in X :  \varphi(x) \cap K = \emptyset \right\rbrace $ is open in $X$. It is \textbf{lower semi-continuous} if for every open subset $U$ of $Y$, $\left\lbrace x \in X :  \varphi(x) \cap U \neq \emptyset \right\rbrace $ is open in $X$. We say that $\varphi$ is semi-continuous if it is either upper or lower semi-continuous.

Let $\varphi: X \to 2^Y$ be a semi-continuous map, and $X_\varphi \subseteq X$ be the set of points where $\varphi$ is continuous. Let

\[ F_{\varphi} := \overline{ \left\{ \left( x, \varphi(x)\right)  \, : \, x \in X \right\}} \subseteq X \times 2^Y, \]

\[ E_{\varphi} := \overline{ \left\{ \left( x, \varphi(x) \right)  \, : \, x \in X_\varphi \right\}} \subseteq F_{\varphi}(X), \]

\[ T_{\varphi} := \overline{ \left\{ \varphi(x) \, : \, x \in X \right\}}, \]

\[ S_{\varphi} := \overline{ \left\{ \varphi(x) \, : \, x \in X_\varphi \right\}}. \]

We denote by $\eta : X \times 2^Y \to X$ and $p: X \times 2^Y  \to 2^Y$  the projections to the first and second coordinate. If $Y$ is second-countable, semi-continuity of $\varphi$ implies that $X_\varphi$ is a comeager subset of $X$ \cite[Theorem~VII]{Kuratowski1928}.

\begin{prop} \label{prop-usc}
Suppose $X$ is a minimal compact $G$-space, $Y$ is a locally compact $G$-space, and $\varphi: X \to 2^Y$ is $G$-equivariant and semi-continuous. Then the following hold:
\begin{enumerate}[label=(\roman*)]
	\item $F_{\varphi}$ has a unique non-empty minimal closed $G$-invariant subset $E_{\varphi}'$ , and $T_{\varphi}$ has a unique minimal closed $G$-invariant subset $S_{\varphi}'$, and $p(E_{\varphi}') = S_{\varphi}'$ .
	\item The extension $\eta : E_{\varphi}' \to X$ is highly proximal.
\end{enumerate}

If moreover $Y$ is second-countable, then $E_{\varphi}' = E_{\varphi}$ and $S_{\varphi}' = S_{\varphi}$ .
\end{prop}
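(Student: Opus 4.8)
The plan is to separate the two projections and to reduce the general (possibly non-metrizable) situation to a second-countable one. First I would dispose of the statements about $T_\varphi$, since they follow formally from those about $F_\varphi$: the map $p \colon F_\varphi \to T_\varphi$ is a continuous surjective $G$-equivariant map, and such a map carries a flow with a unique minimal subset to one with a unique minimal subset. Indeed, if $E_\varphi'$ is the unique minimal subset of $F_\varphi$, then every orbit closure in $F_\varphi$ contains $E_\varphi'$, so for any minimal $N \subseteq T_\varphi$ and any lift $z$ of a point of $N$ one gets $p(E_\varphi') \subseteq p(\overline{Gz}) = \overline{Gp(z)} \subseteq N$, forcing $N = p(E_\varphi')$. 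Thus it suffices to prove the $F_\varphi$-statements and to set $S_\varphi' := p(E_\varphi')$, after which $p(E_\varphi') = S_\varphi'$ is a tautology. From now on I assume $\varphi$ upper semi-continuous, the lower semi-continuous case being dual (with $\subseteq$ replaced by $\supseteq$ below).

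The core of the argument is the case where $Y$ is second-countable, and it rests on one pointwise fact: over a continuity point the fiber of $\eta$ in $F_\varphi$ is a singleton. If $x_0 \in X_\varphi$ and $(x_0,C) \in F_\varphi$, write $(x_0,C) = \lim_i (x_i,\varphi(x_i))$ with $x_i \to x_0$; continuity of $\varphi$ at $x_0$ forces $\varphi(x_i) \to \varphi(x_0)$, so $C = \varphi(x_0)$ and $\eta^{-1}(x_0) \cap F_\varphi = \{(x_0,\varphi(x_0))\}$. When $Y$ is second-countable, $X_\varphi$ is comeager, hence non-empty, and $G$-invariant. For any minimal $M \subseteq F_\varphi$ one has $\eta(M) = X$ by minimality of $X$, so $M$ meets every fiber and therefore contains $(x_0,\varphi(x_0))$ for each $x_0 \in X_\varphi$; taking closures, $M \supseteq E_\varphi$. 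Applying this to a minimal $M \subseteq E_\varphi$ gives $M = E_\varphi$, so $E_\varphi$ is minimal and is the unique minimal subset, and I set $E_\varphi' = E_\varphi$. For high proximality, fix $x \in X$ and some $x_0 \in X_\varphi$, and use minimality of $X$ to pick a net $g_i \in G$ with $g_i x \to x_0$; then $g_i(\eta^{-1}(x)\cap E_\varphi) = \eta^{-1}(g_i x)\cap E_\varphi$ by equivariance, and since any accumulation point of these (non-empty) fibers lies in $\eta^{-1}(x_0)\cap E_\varphi = \{(x_0,\varphi(x_0))\}$, they converge to this singleton in $2^{E_\varphi}$. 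Hence every fiber is compressible, $\eta \colon E_\varphi \to X$ is highly proximal, and finally $S_\varphi = p(E_\varphi) = S_\varphi'$ because $p$ is continuous and $E_\varphi$ is the closure of the graph over $X_\varphi$. This settles (i), (ii) and the second-countable refinement at once.

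For the general case I would second-countabilise the target. The plan is to realise $2^Y$ as an inverse limit of hyperspaces $2^{Y_\alpha}$, where $Y_\alpha$ runs over a cofinal directed family of second-countable locally compact $G$-factors of $Y$ obtained from proper equivariant maps $f_\alpha \colon Y \to Y_\alpha$ (for instance by writing the one-point compactification $Y^+$ as an inverse limit of metrizable $G$-factors and deleting the image of $\infty$). Each $f_\alpha$ induces a continuous equivariant map $\rho_\alpha \colon 2^Y \to 2^{Y_\alpha}$, $C \mapsto \overline{f_\alpha(C)}$, and I set $\varphi_\alpha := \rho_\alpha \circ \varphi$. Properness gives $\{x : \varphi_\alpha(x)\cap K = \emptyset\} = \{x : \varphi(x)\cap f_\alpha^{-1}(K) = \emptyset\}$ for compact $K \subseteq Y_\alpha$, so each $\varphi_\alpha$ is again upper semi-continuous and the core case applies, yielding a unique minimal $E_{\varphi_\alpha}$ with $\eta \colon E_{\varphi_\alpha} \to X$ highly proximal. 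The maps $\mathrm{id}\times\rho_\alpha$ carry $F_\varphi$ onto $F_{\varphi_\alpha}$ and exhibit $F_\varphi = \varprojlim F_{\varphi_\alpha}$; since each connecting map preserves the unique minimal subset by the first paragraph, $E_\varphi' := \varprojlim E_{\varphi_\alpha}$ is the unique minimal subset of $F_\varphi$, and high proximality of $\eta|_{E_\varphi'}$ follows because compressibility of fibers passes to inverse limits.

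The hard part will be this last reduction rather than the dynamics. Making it rigorous requires producing the $Y_\alpha$ with proper connecting maps in a way that is cofinal enough to recover $2^Y = \varprojlim 2^{Y_\alpha}$ (hence $F_\varphi = \varprojlim F_{\varphi_\alpha}$), checking that $\rho_\alpha$ is well defined and continuous for the Chabauty topologies, and verifying that both uniqueness of the minimal subset and high proximality are preserved under inverse limits of $G$-flows over the fixed base $X$. I emphasise that $X$ itself never needs to be metrizable: the cited theorem of Kuratowski giving $X_{\varphi_\alpha}$ comeager only requires the target $2^{Y_\alpha}$ to be second-countable together with $X$ being a Baire space, which holds since $X$ is compact Hausdorff.
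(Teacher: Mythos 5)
Your first two paragraphs are correct. The reduction of the $T_{\varphi}$-statements to the $F_{\varphi}$-statements via $p$ works, and your second-countable argument (singleton fibres of $\eta$ over points of $X_\varphi$, comeagerness and $G$-invariance of $X_\varphi$, hence every minimal subset of $F_\varphi$ contains $E_\varphi$, plus compression of an arbitrary fibre along a net with $g_i x \to x_0 \in X_\varphi$) is a complete and clean proof of (i), (ii) and the refinement when $Y$ is second countable; this covers the paper's principal application, namely $Y = G$ a countable discrete group. Note that the paper gives no argument of its own but cites \cite{Glasner-compress} and \cite{AG-distal}, where the general case is proved directly by net/enveloping-semigroup (circle operation) arguments with no metrizability reduction; your route through continuity points is genuinely different from theirs.

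The genuine gap is your third paragraph, and it is not a matter of missing routine detail: the approximating system you postulate cannot exist in the stated generality. First, a second-countable locally compact space is $\sigma$-compact, and properness pulls $\sigma$-compactness back along $f_\alpha$; so if $Y$ is not $\sigma$-compact --- for instance $Y$ an uncountable discrete set, which is exactly the case $Y=G$ needed in Proposition~\ref{prop-stab-URS} for an uncountable group --- there is \emph{no} proper continuous map from $Y$ to \emph{any} second-countable locally compact space, and your family $\{Y_\alpha\}$ is empty. Your explicit recipe runs into the same wall: a metrizable factor $Y^+ \to Y^+_\alpha$ of the one-point compactification may send points of $Y$ to $\infty_\alpha$, so deleting the image of $\infty$ does not even produce maps defined on $Y$; ruling that out amounts to putting a strictly positive element of $C_0(Y)$ into the corresponding invariant subalgebra, which again forces $\sigma$-compactness. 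Second, and independently, second-countable \emph{$G$-invariant} factors require in general that $G$ be countable (only then is the $G$-invariant subalgebra generated by a separable subalgebra still separable), whereas the proposition is stated for an arbitrary group and the paper does apply it to possibly uncountable groups (e.g.\ to define $\mathcal{A}_G$ before the reduction to countable subgroups in Lemma~\ref{lem-approx-furst-URS}). Concretely, for $G$ uncountable discrete acting on $Y=G$ by translation, the orbit of any nonzero element of $C_0(Y)$ is non-separable in sup norm, so $Y^+$ has no non-trivial metrizable $G$-factor and your inverse system collapses to a point, recovering nothing. Your inverse-limit formalities (images of unique minimal sets, minimality and irreducibility passing to directed inverse limits over a fixed base) are fine, but they never get off the ground. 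If one compactifies first and uses Vietoris hyperspaces $2^{Y^+_\alpha}$ of compact metrizable $G$-factors, obstruction one disappears and your scheme can be completed for countable $G$; but for the statement as written, the non-metrizable case needs a different idea --- precisely what the cited references supply.
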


\begin{proof}
See  Glasner \cite[Theorem 2.3]{Glasner-compress} and Auslander--Glasner \cite[Lemma I.1]{AG-distal}. 
\end{proof}

\subsection{The space of subgroups and URSs}

We denote by $\sub(G)$ the space of subgroups of $G$, equipped with the product topology from $\left\lbrace 0,1\right\rbrace ^G$. It is a compact $G$-space, where $G$ acts  by conjugation. When $G$ is countable, the space $\sub(G)$ is second-countable. If $H \in \sub(G)$, we denote by $H^G$ the $G$-conjugates of $H$, i.e.\ the $G$-orbit of $H$ in $\sub(G)$.

 A URS of $G$ is a (non-empty) minimal closed $G$-invariant subset of $\sub(G)$. By Zorn's lemma every (non-empty) closed $G$-invariant subset of $\sub(G)$ contains a URS.  A URS is \textbf{finite} if it is a finite $G$-orbit. A URS that is not finite is called \textbf{continuous}. By minimality and compactness, a continuous URS has no isolated points. The singleton $\left\lbrace \left\lbrace 1\right\rbrace \right\rbrace $ is called the trivial URS. If $\mathcal{P}$ is a property of groups, we say that a URS $\H$ has $\mathcal{P}$ if $H$ has $\mathcal{P}$ for every $H \in \H$. 
 
 \begin{defi}
 	If $\H$ is a URS of $G$, we denote by $\env(\H)$ the subgroup generated by all subgroups $H$ in $\H$, called the envelope of $\H$. The subgroup $\env(\H)$ is normal in $G$, and it is the smallest normal subgroup of $G$ containing some subgroup $H \in \H$.
 \end{defi}

Every minimal compact $G$-space naturally gives rise to a URS \cite{GW-urs}:

\begin{prop} \label{prop-stab-URS}
If $X$ is a compact $G$-space, then the stabilizer map $S: X \to \sub(G)$, $x \mapsto G_x$, is $G$-equivariant and upper semi-continuous. In particular if $X$ is minimal, then Proposition \ref{prop-usc} applies. 
\end{prop}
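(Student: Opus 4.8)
The plan is to verify the two asserted properties of $S$ directly from the definitions and then to invoke Proposition~\ref{prop-usc}. Throughout I regard $\sub(G)$ as a subspace of $2^G$, where $G$ carries the discrete topology; with this convention the Chabauty topology on $2^G$ restricts to the product topology on $\sub(G) \subseteq \{0,1\}^G$, so that speaking of semi-continuity of $S$ in the sense of \S\ref{sem-cont-ext} is meaningful. Note that in this setting every subset of $G$ is closed, and that the compact subsets of $G$ are exactly the finite ones.

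First I would establish $G$-equivariance. Since $G$ acts on $\sub(G)$ by conjugation, what is needed is $S(gx) = g\,S(x)\,g^{-1}$ for all $g \in G$ and $x \in X$. This is the elementary chain of equivalences $h \in G_{gx} \iff h(gx) = gx \iff (g^{-1}hg)x = x \iff g^{-1}hg \in G_x \iff h \in g G_x g^{-1}$, which gives $G_{gx} = g G_x g^{-1}$, as required. Next comes upper semi-continuity. By the definition in \S\ref{sem-cont-ext} applied with $Y = G$, it suffices to show that for every compact — equivalently finite — subset $K \subseteq G$ the set $\{x \in X : G_x \cap K = \emptyset\}$ is open in $X$. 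I would rewrite this set as the finite intersection $\bigcap_{g \in K} \{x \in X : gx \neq x\}$. For each fixed $g$, continuity of the action $G \times X \to X$ makes $x \mapsto (gx,x)$ a continuous map $X \to X \times X$; since $X$ is compact Hausdorff its diagonal is closed, so the preimage of its complement, namely $\{x : gx \neq x\}$, is open, and a finite intersection of open sets is open. The only point requiring care is the identification of topologies noted above, which guarantees that the compact-set formulation of upper semi-continuity reduces to a condition ranging over finite subsets of $G$; beyond this bookkeeping there is no genuine obstacle, as the statement is the foundational observation of Glasner--Weiss.

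Finally, for the last sentence I would simply observe that $G$ endowed with the discrete topology is a locally compact $G$-space for the conjugation action, so that, taking $\varphi = S$, all the hypotheses of Proposition~\ref{prop-usc} are satisfied as soon as $X$ is minimal; that proposition then applies verbatim.
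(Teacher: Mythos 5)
Your proof is correct. Note, however, that the paper itself gives no argument for this proposition at all: it is stated as a known fact and attributed to Glasner--Weiss \cite{GW-urs}, so there is no "paper proof" to match yours against. What you have written is the standard direct verification, and it is complete: the equivariance computation $G_{gx}=gG_xg^{-1}$ is right, and the upper semi-continuity argument correctly reduces, via discreteness of $G$ (compact $=$ finite), to showing each set $\left\lbrace x : gx \neq x \right\rbrace$ is open, which your diagonal argument handles (using that compact spaces are Hausdorff here). You were also right to flag the one genuine piece of bookkeeping, namely that for discrete $G$ the Chabauty topology on $2^G$ coincides with the product topology inherited from $\left\lbrace 0,1\right\rbrace^G$, so that the semi-continuity notion of \S\ref{sem-cont-ext} and the topology the paper puts on $\sub(G)$ agree; without that identification the appeal to Proposition~\ref{prop-usc} would not typecheck. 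The final observation that $G$ with the conjugation action is a locally compact $G$-space, so that Proposition~\ref{prop-usc} applies verbatim when $X$ is minimal, is exactly what the paper's "in particular" clause requires. In short: your proposal supplies the elementary proof the paper outsources to a citation.
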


\begin{defi}
If $X$ is a minimal compact $G$-space, the unique URS contained in $\overline{ \left\{ G_x \, : \, x \in X \right\}}$ is denoted $S_G(X)$, and is called the \textbf{stabilizer URS} associated to the $G$-space $X$.
\end{defi}

One verifies that the $G$-action on $X$ is topologically free if and only if the URS $S_G(X)$ is trivial.

\begin{lem} \label{lem-containing-closed}
	Let $H,K,L$ be subgroups of $G$ such that $H \leq L$. If $K$ belongs to the closure of the $L$-orbit of $H$ in $\sub(G)$, then $K \leq L$.
\end{lem}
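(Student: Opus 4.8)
The plan is to reduce the statement to the closedness of the set of subgroups contained in $L$, which follows directly from the definition of the topology on $\sub(G)$. The crucial elementary observation is that the $L$-orbit of $H$ is entirely contained in $L$: since $H \leq L$ and $L$ is a subgroup, for every $l \in L$ the conjugate $l H l^{-1}$ satisfies $l H l^{-1} \leq L$. Hence the whole $L$-orbit $\{l H l^{-1} : l \in L\}$ lies in the set
\[ C_L := \left\{ M \in \sub(G) : M \leq L \right\}. \]

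First I would show that $C_L$ is closed in $\sub(G)$. Identifying each subgroup with its indicator function in $\{0,1\}^G$, membership $M \leq L$ is equivalent to requiring that $g \notin M$ for every $g \in G \setminus L$. For a fixed $g$, the set $\{M \in \sub(G) : g \notin M\}$ is the preimage of $\{0\}$ under the coordinate projection $\sub(G) \to \{0,1\}$ at the coordinate $g$, which is continuous for the product topology; since $\{0\}$ is clopen in $\{0,1\}$, this set is clopen. Therefore
\[ C_L = \bigcap_{g \in G \setminus L} \left\{ M \in \sub(G) : g \notin M \right\} \]
is an intersection of (cl)open sets, hence closed.

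Finally, since $C_L$ is a closed subset of $\sub(G)$ containing the $L$-orbit of $H$, it contains the closure of that orbit. As $K$ is assumed to lie in this closure, we conclude $K \in C_L$, that is, $K \leq L$, as desired.

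I do not expect any genuine obstacle here: the only point requiring care is the verification that $C_L$ is closed, which is purely formal and relies solely on the fact that containment in $L$ is a conjunction of conditions on individual coordinates of $\{0,1\}^G$. The argument makes no use of minimality or of the hypotheses on $\tau_\N$; it is a general closedness fact about the Chabauty-type topology on $\sub(G)$.
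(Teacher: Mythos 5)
Your proof is correct and follows exactly the paper's argument: the paper likewise observes that the set of subgroups contained in $L$ (written there as $\sub(L)$) is a closed subset of $\sub(G)$ containing the $L$-orbit of $H$, hence its closure. You merely spell out the coordinate-wise verification that this set is closed, which the paper leaves implicit.
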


\begin{proof}
	The subset $\sub(L)$ is a closed subset of $\sub(G)$, and contains the $L$-orbit of $H$ since  $H \leq L$.
\end{proof}

\begin{lem} \label{lem-quotient-urs-lsc}
Let $N$ be a normal subgroup of $G$. Then the map $\sub(G) \to \sub(G)$, $H \mapsto HN$, is $G$-equivariant and lower semi-continuous.
\end{lem}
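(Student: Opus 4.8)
The plan is to verify the two assertions separately by direct computation, exploiting that here the target space $Y = G$ is discrete, so that the Chabauty topology on $2^G$ restricts to the product topology on $\sub(G) \subseteq \{0,1\}^G$ already fixed on $\sub(G)$.

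For the $G$-equivariance, I would first record that $HN$ is genuinely a subgroup of $G$, which uses that $N$ is normal. Since the action on $\sub(G)$ is by conjugation, it then suffices to compute, for $g \in G$, that $g(HN)g^{-1} = (gHg^{-1})(gNg^{-1}) = (gHg^{-1})N$, where the last equality is $gNg^{-1} = N$. This says precisely that $H \mapsto HN$ commutes with the conjugation action, so the map is $G$-equivariant; this step is immediate.

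For lower semi-continuity, I would unwind the definition: it requires that for every open $U \subseteq G$ the set $\{H \in \sub(G) : HN \cap U \neq \emptyset\}$ be open. As $G$ is discrete, every open $U$ is a union of singletons, and the condition ``$HN \cap U \neq \emptyset$'' is the union over $g \in U$ of ``$g \in HN$''; since a union of open sets is open, it suffices to treat $U = \{g\}$ for a single $g \in G$. I would then rewrite the membership condition: $g \in HN$ means $g = hn$ for some $h \in H$, $n \in N$, equivalently $gn^{-1} \in H$ for some $n \in N$, equivalently (as $N^{-1} = N$) $gn \in H$ for some $n \in N$. This yields
\[ \{H \in \sub(G) : g \in HN\} = \bigcup_{n \in N} \{H \in \sub(G) : gn \in H\}. \]
Each set on the right is a basic clopen cylinder for the product topology on $\{0,1\}^G$ (the coordinate indexed by $gn$ equals $1$), hence open in $\sub(G)$, so their union is open. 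This establishes lower semi-continuity.

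I do not expect a genuine obstacle here; the only point requiring care is the bookkeeping that converts the defining condition into a statement about cylinder sets. The essential observation is that the existential quantifier over $N$ in the description of $HN$ produces a \emph{union}, which preserves openness, whereas the condition dual to upper semi-continuity would instead produce an intersection of clopen sets — which clarifies why the conclusion here is one-sided, in contrast to the upper semi-continuity of the stabilizer map in Proposition \ref{prop-stab-URS}.
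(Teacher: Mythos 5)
Your proof is correct and is essentially the paper's argument: the paper, too, reduces to a single $g\in G$ (discreteness of $G$) and, for each $H$ with $g\in HN$, uses the cylinder set of subgroups containing an element $h\in H\cap gN$ as the required neighbourhood, which is exactly one of the cylinders $\left\lbrace H' \in \sub(G) : gn \in H'\right\rbrace$ in your union. The only difference is presentational — you exhibit the set $\left\lbrace H : g\in HN\right\rbrace$ globally as a union of basic open sets, while the paper verifies openness pointwise — and your closing remark about the existential quantifier yielding a union is a fair account of why the statement is one-sided.
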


\begin{proof}
	It is $G$-equivariant because $N$ is normal in $G$. Since $G$ is discrete, lower semi-continuity means that for every $g \in G$ and every $H \in \sub(G)$ such that $g \in HN$, there is a neighbourhood of $H$ in which $g \in H'N$ remains true. If $h \in H$ is such that $g \in hN$, then the set of subgroups of $G$ containing $h$ is such a neighbourhood.
\end{proof}

\subsection{Coset topologies on groups}

Let $G$ be a group. We denote by $\N_G$ the set of normal subgroups of $G$. We make the standing convention that when considering a family  $\N \subseteq \N_G$ of normal subgroups of $G$, we always assume that $\N$ is non-empty.

\begin{defi}
Let $\mathcal{N} \subseteq \N_G$. If $E$ is a subset of $G$, we denote \[ \mathrm{cl}_{\mathcal{N}}(E) = \bigcap_{N \in \N} EN. \] 
\end{defi}

We say that $\N$ is filtering if for every $N_1,N_2 \in \mathcal{N}$ there exists $N_3 \in \mathcal{N}$ such that $N_3 \leq N_1 \cap N_2$. We record the following \cite[Chap. III]{Bourbaki-top-gen}:

\begin{prop} \label{prop-coset-topology-bourbaki}
Fix  $\mathcal{N} \subseteq \N_G$ . Then: \begin{enumerate}
	\item the family of cosets $g N$, $g \in G$, $N \in \mathcal{N}$, forms a subbasis for a group topology $\tau_\mathcal{N}$ on $G$.
		\item The topology $\tau_\mathcal{N}$ is Hausdorff if and only if $\bigcap_{ \N} N = \left\lbrace  1 \right\rbrace $. 
		\item  Suppose that $\mathcal{N}$ is filtering. Then for every subset $E$ of $G$,  the closure of $E$ with respect to $\tau_\mathcal{N}$ is equal to $\mathrm{cl}_{\mathcal{N}}(E)$.
\end{enumerate}
\end{prop}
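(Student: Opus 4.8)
The plan is to treat the three statements in turn, the key observation throughout being that the normality of the members of $\N$ makes all the relevant topological-group axioms automatic.

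For (1), I would first pass from $\N$ to the family $\mathcal{M}$ of all finite intersections $N_1 \cap \cdots \cap N_k$ with $N_i \in \N$. Each such intersection is again a \emph{normal} subgroup of $G$, and a short computation (using that $x \in gN$ forces $gN = xN$) shows that the basic open sets of the subbasis topology are exactly the translates $xM$ with $M \in \mathcal{M}$; thus $\mathcal{M}$ is a filter base of normal subgroups that is a neighbourhood base at the identity for $\tau_\N$. It then suffices to invoke the standard Bourbaki criterion: a filter base $\mathcal{M}$ at $1$ defines a unique group topology provided that for every $M \in \mathcal{M}$ there are members of $\mathcal{M}$ contained in $MM$, in $M^{-1}$, and in $gMg^{-1}$ for each $g \in G$. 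Since each $M$ is a subgroup we have $MM = M$ and $M^{-1} = M$, and since each $M$ is normal we have $gMg^{-1} = M$; all three conditions hold with $M$ itself, so $\tau_\N$ is a group topology.

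For (2), I would use that a topological group is Hausdorff if and only if the singleton $\{1\}$ is closed, equivalently $\overline{\{1\}} = \{1\}$. I would then compute $\overline{\{1\}}$ directly: an element $g$ lies in $\overline{\{1\}}$ iff every basic neighbourhood of $g$ contains $1$, and since these neighbourhoods are the sets $gM$ with $M \in \mathcal{M}$, this says $g^{-1} \in M$, hence $g \in M$, for every $M$. As $\bigcap_{M \in \mathcal{M}} M = \bigcap_{N \in \N} N$, we obtain $\overline{\{1\}} = \bigcap_{\N} N$, and (2) follows. For (3), the filtering hypothesis is what upgrades the cosets $gN$ from a subbasis to an honest neighbourhood base at $g$: given $N_1, N_2$ one chooses $N_3 \le N_1 \cap N_2$ in $\N$, so that $gN_3 \subseteq gN_1 \cap gN_2$. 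Hence $g$ lies in the $\tau_\N$-closure of $E$ iff $gN \cap E \neq \emptyset$ for every $N \in \N$. Using normality ($N = N^{-1}$), the condition $gN \cap E \neq \emptyset$ is equivalent to $g \in EN$, so $g \in \overline{E}$ iff $g \in EN$ for all $N$, i.e. iff $g \in \bigcap_{N \in \N} EN = \cl_\N(E)$.

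I expect no serious obstacle here, as everything reduces to verifications of standard topological-group facts; the only points requiring care are the two distinct roles played by the hypotheses on $\N$. Normality is used at every stage — to make finite intersections normal, to obtain the conjugation axiom in (1) for free, and to identify $gN$ with $Ng$ in (2) and (3) — whereas the filtering assumption is genuinely needed only in (3), where it collapses neighbourhoods to single cosets and thereby expresses the closure as the single intersection $\bigcap_{N} EN$ rather than an intersection over all finite sub-intersections.
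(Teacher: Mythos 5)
Your proof is correct. The paper does not actually prove this proposition: it records it as a standard fact with a citation to Bourbaki's \emph{Topologie g\'en\'erale}, Chapter III, so your self-contained verification is precisely the argument that the citation stands in for --- reduce to the filter base $\mathcal{M}$ of finite intersections (normal, since finite intersections of normal subgroups are normal), check the neighbourhood-base axioms via $MM = M$, $M^{-1} = M$ and $gMg^{-1} = M$, then compute $\overline{\{1\}}$ for the Hausdorff criterion and compute $\overline{E}$ under the filtering hypothesis. Two minor inaccuracies of attribution, neither of which affects correctness: first, the identity $N^{-1} = N$ invoked in part (3) is a subgroup property, not a consequence of normality (normality is genuinely needed only to make $\tau_\N$ a group topology, through $gMg^{-1} = M$); second, the Bourbaki criterion requires, for each $M \in \mathcal{M}$, some $V \in \mathcal{M}$ with $VV \subseteq M$, not a member contained in $MM$ as you phrase it --- this is immaterial here since $V = M$ satisfies both, but your phrasing is not the correct general statement. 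Your diagnosis of where filtering is genuinely needed is exactly right: without it, the $\tau_\N$-closure of $E$ is $\bigcap_{M \in \mathcal{M}} EM$, which can be strictly smaller than $\bigcap_{N \in \N} EN$.
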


If $\C$ is a class of groups, we denote by $\N_G(\C)$ the normal subgroups of $G$ such that $G/N \in \C$. Note that a group $G$ is residually-$\C$ if and only if $\bigcap_{ \N_G(\C)} N = \left\lbrace  1 \right\rbrace $. 

 When $\C$ is the class of all finite groups and $\N = \N_G(\C)$,  $\tau_\N$ is the profinite topology on $G$. For simplicity we write $\mathrm{cl}(H)$ for the closure in the profinite topology.  When $\C$ is the class of all finite $p$-groups ($p$ is a prime number) and $\N = \N_G(\C)$,  $\tau_\N$ is the pro-$p$ topology. In that case we write $\mathrm{cl}_p(H)$ for the closure in the pro-$p$ topology.

\subsection{Laws} \label{subsec-identity}

Let $w = w(x_1,\ldots,x_k)$ be a word in $k$ letters $x_1,\ldots,x_k$, meaning that $w$ is an element of the free group $F_k$ freely generated by $x_1,\ldots,x_k$. We assume $w$ is non-trivial. Given a group $G$, the word $w$ naturally defines a map $G^k \to G$, a $k$-tuple $(g_1,\ldots,g_k)$ being mapped to the element $w(g_1,\ldots,g_k)$ of $G$ that is obtained by replacing each $x_i$ by $g_i$. We denote by $\Sigma_w(G) \subseteq G^k$ the set of $(g_1,\ldots,g_k)$ such that $w(g_1,\ldots,g_k) = 1$.  We say that $G$  \textbf{satisfies the law} $w$ if  $\Sigma_w(G) =G^k$.

\begin{ex} \label{ex-law-solv}
Abelian groups are the groups that satisfy the law $w_1 = [x_1,x_2] =  x_1 x_2 x_1^{-1} x_2^{-1}$. More generally  let $(w_\ell)_{\ell \geq 1}$ be the sequence of laws defined inductively by the rule: if $w_\ell$ is already defined and involves $2^\ell$ variables, then \[ w_{\ell + 1}(x_{1},\ldots,x_{2^{\ell +1}}) = [w_\ell(x_1,\ldots,x_{2^{\ell}}), w_\ell(x_{2^{\ell}+1},\ldots,x_{2^{\ell +1}})].  \] Then a group $G$ is solvable of length at most $\ell$ if and only if $G$ satisfies the law $w_\ell$. 
\end{ex}

\begin{lem} \label{lem-closure-keeps-law}
	Suppose $G$ is a Hausdorff topological group, and let $w \in F_k$. Then $\Sigma_w(G)$ is a closed subset of $G^k$. In particular if a subgroup $H$ of $G$ satisfies the law  $w$, then so does its closure.
\end{lem}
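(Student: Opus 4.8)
The plan is to realize $\Sigma_w(G)$ as the preimage of a point under a continuous map. Writing $w = x_{i_1}^{\epsilon_1} \cdots x_{i_n}^{\epsilon_n}$ with each $\epsilon_j \in \{+1,-1\}$, the associated word map $\omega \colon G^k \to G$, $\omega(g_1,\ldots,g_k) = w(g_1,\ldots,g_k)$, is obtained by composing the coordinate projections $G^k \to G$, the inversion map $G \to G$, and the multiplication map $G \times G \to G$. Since $G$ is a topological group these building blocks are all continuous, and the projections are continuous, so $\omega$ is continuous.

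Next, because $G$ is Hausdorff the singleton $\{1\}$ is closed in $G$. Hence $\Sigma_w(G) = \omega^{-1}(\{1\})$ is the preimage of a closed set under a continuous map, and is therefore closed in $G^k$. This gives the first assertion.

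For the second assertion I would reformulate the hypothesis as $H^k \subseteq \Sigma_w(G)$, which is exactly what it means for $H$ to satisfy the law $w$, and then pass to closures. In the product topology on $G^k$ one has the standard identity $\overline{H^k} = (\overline{H})^k$. Since $\Sigma_w(G)$ is closed, $\overline{H^k} \subseteq \Sigma_w(G)$, and therefore $(\overline{H})^k \subseteq \Sigma_w(G)$. Recalling that the closure of a subgroup of a topological group is again a subgroup, $\overline{H}$ is a subgroup all of whose $k$-tuples lie in $\Sigma_w(G)$, i.e.\ $\overline{H}$ satisfies the law $w$.

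I do not expect a substantial obstacle here: the argument is entirely formal once continuity of the word map is noted. The only points demanding a moment of care are the two standard topological-group facts being invoked, namely that the word map is continuous (so that $\Sigma_w(G)$ is genuinely the preimage of a closed set) and that closure commutes with finite products while preserving the subgroup property.
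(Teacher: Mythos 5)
Your proposal is correct and follows essentially the same route as the paper: continuity of the word map together with the Hausdorff hypothesis shows $\Sigma_w(G)$ is the closed preimage of $\{1\}$. The only difference is that you spell out the \enquote{in particular} step (via $\overline{H^k} = (\overline{H})^k$ and the fact that $\overline{H}$ is a subgroup), which the paper leaves implicit.
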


\begin{proof}
Since $G$ is Hausdorff, $\left\lbrace 1 \right\rbrace $ is closed in $G$. The map $G^k \to G$ associated to $w$ being continuous, the preimage $\Sigma_w(G)$ of $\left\lbrace 1 \right\rbrace $ is a closed subset of $G^k$.
\end{proof}

\section{The $\tau_\mathcal{N}$-closure of a URS}

Let $\H$ be a closed subset of $\sub(G)$, and $L$ a subgroup of $G$. Recall that $ L^G$ is the set of $G$-conjugates of $L$. For every $\Sigma \subseteq L^G$, we write \[\H_\Sigma =  \left\lbrace H \in \H : \forall K \in L^G, \, H \subset K \Leftrightarrow K \in \Sigma \right\rbrace. \] The relevance of this definition is indicated by the following two lemmas.

\begin{lem} \label{lem-urs-disj-union}
If $\H_\Sigma \cap \H_{\Sigma'} \neq \emptyset$ then $\Sigma = \Sigma'$, and $\H$ is the disjoint union of the $\H_\Sigma$ when $\Sigma$ ranges over subsets of $L^G$. 
\end{lem}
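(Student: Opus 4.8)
The plan is to recognize the sets $\H_\Sigma$ as the fibers of a single well-defined map, from which both assertions follow formally. For each $H \in \H$ I would set $\Sigma(H) = \left\{ K \in L^G : H \subseteq K \right\}$, the collection of conjugates of $L$ that contain $H$; this is a genuine subset of $L^G$ canonically attached to each $H \in \H$.

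The first observation to make is that the defining condition for membership in $\H_\Sigma$ is exactly an equality of sets. Unwinding the definition, $H \in \H_\Sigma$ means that for every $K \in L^G$ one has $H \subseteq K$ if and only if $K \in \Sigma$; this says precisely that $\left\{ K \in L^G : H \subseteq K \right\} = \Sigma$, that is, $\Sigma(H) = \Sigma$. Hence $\H_\Sigma = \left\{ H \in \H : \Sigma(H) = \Sigma \right\}$ is exactly the fiber over $\Sigma$ of the map $\H \to \mathcal{P}(L^G)$, $H \mapsto \Sigma(H)$, where $\mathcal{P}(L^G)$ denotes the power set of $L^G$.

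Both claims are then immediate from the fact that the fibers of a map partition its domain. For the first assertion, if $H \in \H_\Sigma \cap \H_{\Sigma'}$ then $\Sigma = \Sigma(H) = \Sigma'$. For the second, every $H \in \H$ lies in $\H_{\Sigma(H)}$, so the family $\left( \H_\Sigma \right)_\Sigma$ covers $\H$ as $\Sigma$ ranges over subsets of $L^G$, and disjointness is precisely the content of the first assertion.

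I expect no serious obstacle here, since the statement is a bookkeeping reformulation rather than a substantive result: the only point that requires any care is the logical unwinding of the biconditional in the definition of $\H_\Sigma$ into the single set equality $\Sigma(H) = \Sigma$, after which the partition structure is automatic.
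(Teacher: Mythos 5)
Your proof is correct and is essentially the paper's own argument: the paper likewise notes that every $H \in \H$ lies in $\H_\Sigma$ for $\Sigma = \left\lbrace K \in L^G : H \subset K \right\rbrace$, which is exactly your observation that the $\H_\Sigma$ are the fibers of $H \mapsto \Sigma(H)$. Your write-up just makes the bookkeeping explicit.
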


\begin{proof}
The first assertion is consequence of the definitions. The second assertion is also clear since for every $H \in \H$, one has $H \in \H_\Sigma$ with $\Sigma =  \left\lbrace  K \in L^G : H \subset K \right\rbrace.$
\end{proof}

\begin{lem} \label{lem-urs-clopen-profinite}
Let $L$ be a subgroup of $G$ such that $L^G$ is finite. Suppose $\H$ is a URS of $G$. Then $\H_\Sigma$ is a clopen subset of $\H$ for every $\Sigma \subseteq L^G$.
\end{lem}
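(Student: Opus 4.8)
The plan is to leverage the finiteness of $L^G$ together with the minimality of $\H$ in order to show that the number of $G$-conjugates of $L$ containing a given subgroup does not depend on the chosen subgroup of $\H$. Once this numerical invariant is known to be constant, the sets $\H_\Sigma$ organize into a \emph{finite} partition of $\H$ into closed sets, and a finite partition of a space into closed sets consists automatically of clopen sets. So the whole difficulty is concentrated in establishing this constancy.

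To set things up, I would write $L^G = \{K_1,\dots,K_n\}$ and, for each $K \in L^G$, put $C_K := \{H \in \H : H \subseteq K\} = \H \cap \sub(K)$; since $\sub(K)$ is closed in $\sub(G)$ (exactly as in the proof of Lemma \ref{lem-containing-closed}), each $C_K$ is closed in $\H$. For $H \in \H$ set $\Sigma(H) := \{K \in L^G : H \subseteq K\}$, so that by definition $\H_\Sigma = \{H \in \H : \Sigma(H) = \Sigma\}$ (compare Lemma \ref{lem-urs-disj-union}). The substance of the argument is that the integer-valued function $f(H) := |\Sigma(H)|$ is constant on $\H$. First, $f$ is $G$-invariant: for $g \in G$ one has $gHg^{-1} \subseteq K \iff H \subseteq g^{-1}Kg$, and as conjugation permutes $L^G$ this gives $\Sigma(gHg^{-1}) = g\,\Sigma(H)\,g^{-1}$, hence $f(gHg^{-1}) = f(H)$. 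Second, for every integer $m$ the set $\{H \in \H : f(H) \geq m\} = \bigcup_{|S| = m} \bigcap_{K \in S} C_K$, where $S$ ranges over the $m$-element subsets of $L^G$, is a finite union of finite intersections of the closed sets $C_K$, hence closed. Since $f$ takes only finitely many values it attains a maximum $c$, and then $\{H : f(H) = c\} = \{H : f(H) \geq c\}$ is a nonempty closed $G$-invariant subset of $\H$; by minimality it equals $\H$, so $f \equiv c$.

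With $f \equiv c$, the conclusion is immediate. As every $H$ satisfies $|\Sigma(H)| = c$, the sets $\bigcap_{K \in S} C_K$, for $S$ ranging over the $c$-element subsets of $L^G$, cover $\H$ (each $H$ lies in $\bigcap_{K \in \Sigma(H)} C_K$ with $\Sigma(H)$ of size $c$) and are pairwise disjoint (a common point of two distinct such $S$ would force $|\Sigma(H)| \geq |S \cup S'| > c$). Each such intersection is exactly $\{H : \Sigma(H) = S\}$, so $\H$ is partitioned into finitely many closed sets, whence every piece is also open, i.e.\ clopen. Finally $\H_\Sigma = \{H : \Sigma(H) = \Sigma\}$ coincides with one of these clopen pieces when $|\Sigma| = c$ and is empty otherwise, so it is clopen in either case. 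The only nontrivial input throughout is that the manifestly closed set $C_K$ is in fact \emph{open} in $\H$; this fails for general closed invariant subsets, and recovering it is precisely what the constancy of $f$ — and therefore minimality — buys us, which is where I expect the main difficulty of the argument to reside.
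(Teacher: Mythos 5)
Your proof is correct, and it shares the paper's overall skeleton: show that the counting function $f(H)=|\{K\in L^G : H\subseteq K\}|$ is constant on $\H$ using minimality, then conclude by the finite partition of $\H$ into the sets $\H_\Sigma$. But your execution is dual to the paper's at both steps. For the constancy, the paper works at a point where the count attains its \emph{minimum} $r$: since non-containment is an open condition, the count is $\leq r$ (hence $=r$) on a neighbourhood of that point, and minimality is then invoked in its \emph{dense-orbit} form to propagate $r$ to all of $\H$, using conjugation-invariance of the count. You instead work with the \emph{maximum} $c$: the superlevel sets $\{f\geq m\}$ are closed, being finite unions of intersections of the closed sets $C_K=\H\cap\sub(K)$, so $\{f=c\}=\{f\geq c\}$ is a nonempty closed $G$-invariant subset, and minimality in its \emph{no-proper-closed-invariant-subset} form finishes the step; this is arguably slicker, since it avoids any pointwise neighbourhood argument. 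For the clopen conclusion the roles are again reversed: the paper proves $\H_\Sigma$ is \emph{open} directly (near $H\in\H_\Sigma$, a subgroup avoids every conjugate outside $\Sigma$ yet is contained in exactly $|\Sigma|$ conjugates, so by pigeonhole it is contained in all of $\Sigma$) and deduces closedness from the partition of Lemma \ref{lem-urs-disj-union}; you prove $\H_\Sigma$ is \emph{closed} directly (it equals $\bigcap_{K\in\Sigma}C_K$ once $f\equiv c$, when $|\Sigma|=c$, and is empty otherwise) and deduce openness from the finiteness of the partition into closed sets. Both routes use exactly the same ingredients -- finiteness of $L^G$, closedness of containment (as in Lemma \ref{lem-containing-closed}), conjugation-invariance of the count, and minimality -- so neither is more general; yours trades the paper's two local arguments for two global ones.
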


\begin{proof}
	For $H$ in $\H$, we let $n(H)$ be the number of conjugates of $L$ containing $H$. By our assumption, the number $n(H)$ is finite. We claim that $n(H)$ is constant on $\H$. In order to see this, take $H \in \H$ such that $n(H)  = r$ is minimal. Since not being contained in a subgroup is an open condition, one can find a neighbourhood $V$ of $H$  such that $n(H') \leq r$ for every $H' \in V$. Hence by minimality of $r$ we have $n(H') = r$ for every $H' \in V$. Now for every $K \in \H$, by minimality of the $G$-action on $\H$ the subset $V$ contains a conjugate of $K$. Since $n(K)$ is invariant under conjugation, we deduce  $n(K) = r$.

	Now fix $\Sigma \subseteq L^G$ such that $\H_\Sigma$ is non-empty, and let $H \in \H_\Sigma$. Again there is a neighbourhood $V$ of $H$ in $\H$ such that for every $H'$ in $V$, we have $H' \not\subset J$ for every $J \in L^G \setminus  \Sigma$. Moreover by the previous paragraph we have $n(H') = n(H)$. Hence by the pigeonhole principle we deduce that $H' \subset J$ for every $J \in \Sigma$. This shows that $\H_\Sigma$ is open. Since the family $(\H_\Sigma)$ forms a partition of $\H$ by Lemma \ref{lem-urs-disj-union}, it follows that $\H_\Sigma$ is also closed.
\end{proof}

\begin{defi}
Let $\mathcal{N} \subseteq \N_G$. We say that a URS $\H$ of $G$ is \textbf{$\N$-finitary} if $(HN)^G$ is finite for every $H \in \H$, $N \in \N$. 
\end{defi}

\begin{prop} \label{prop-Nclosure-usc}
Let $\mathcal{N} \subseteq \N_G$, and let $\H$ be a URS of  $G$ that is $\N$-finitary. Then the map $\H \to \sub(G)$, $H \mapsto \mathrm{cl}_\N(H)$, is upper semi-continuous. 
\end{prop}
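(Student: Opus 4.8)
The plan is to reduce the claim to a statement about the individual maps $H \mapsto HN$ and then exploit the $\N$-finitary hypothesis through the clopen partition of Lemma~\ref{lem-urs-clopen-profinite}. First I would note that $\sub(G) \subseteq 2^G$ is taken with $G$ discrete, so the compact subsets of $G$ are exactly the finite ones, and upper semi-continuity of $H \mapsto \mathrm{cl}_\N(H)$ amounts to asking that $\{H \in \H : g \notin \mathrm{cl}_\N(H)\}$ be open for every single $g \in G$ (a finite $K$ is then handled by a finite intersection). Since $\mathrm{cl}_\N(H) = \bigcap_{N \in \N} HN$, one has $g \notin \mathrm{cl}_\N(H)$ iff $g \notin HN$ for some $N \in \N$, so $\{H : g \notin \mathrm{cl}_\N(H)\} = \bigcup_{N \in \N}\{H \in \H : g \notin HN\}$. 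It therefore suffices to prove that, for each fixed $N \in \N$ and $g \in G$, the set $\{H \in \H : g \notin HN\}$ is open in $\H$.

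The main step is to produce, around any $H_1 \in \H$ with $g \notin H_1 N$, a clopen neighbourhood on which the condition $g \notin H'N$ persists. I would set $L = H_1 N$; by the $\N$-finitary hypothesis $L^G = (H_1 N)^G$ is finite, so Lemma~\ref{lem-urs-clopen-profinite} applies to $L$. Taking $\Sigma_1 = \{K \in L^G : H_1 \subseteq K\}$, the set $\H_{\Sigma_1}$ is a clopen neighbourhood of $H_1$. The key observation is that $L$ is itself one of the conjugates indexing $\Sigma_1$ (indeed $L \in L^G$ and $H_1 \subseteq L$, so $L \in \Sigma_1$); hence every $H' \in \H_{\Sigma_1}$ satisfies $H' \subseteq L$ by the defining property of $\H_{\Sigma_1}$. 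Since $N$ is normal and $N \subseteq L = H_1 N$, this yields $H'N \subseteq LN = L$, and therefore $g \notin H'N$ because $g \notin L$. Thus $\H_{\Sigma_1}$ witnesses openness at $H_1$, and $\{H \in \H : g \notin HN\}$ is open.

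Assembling these, $\{H : g \notin \mathrm{cl}_\N(H)\}$ is a union of open sets and hence open, and for finite $K \subseteq G$ the set $\{H : \mathrm{cl}_\N(H) \cap K = \emptyset\} = \bigcap_{g \in K}\{H : g \notin \mathrm{cl}_\N(H)\}$ is a finite intersection of opens, giving the asserted upper semi-continuity. The point to watch — and the reason the hypothesis is needed — is that $H \mapsto HN$ is in general only \emph{lower} semi-continuous (Lemma~\ref{lem-quotient-urs-lsc}), which makes $\{H : g \in HN\}$ open, the opposite of what is wanted. I expect the only real subtlety to be the main step above: recognizing that membership in the clopen piece $\H_{\Sigma_1}$ forces containment in $L = H_1 N$ itself, after which normality of $N$ collapses $H'N$ inside $L$. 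The finiteness of $(H_1 N)^G$ is precisely what upgrades lower semi-continuity to upper semi-continuity of $H \mapsto HN$ on the minimal set $\H$.
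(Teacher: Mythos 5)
Your proof is correct and follows essentially the same route as the paper's: both hinge on applying Lemma~\ref{lem-urs-clopen-profinite} to $L = HN$ (finite conjugacy class by the $\N$-finitary hypothesis) to obtain a clopen neighbourhood on which $H' \leq HN$, whence $H'N \leq HN$ and the excluded element stays excluded, then intersecting over the finitely many elements of $K$. The only difference is organizational — you reduce first to openness of $\{H : g \notin HN\}$ for fixed $N$ and spell out explicitly why membership in $\H_{\Sigma_1}$ forces $H' \subseteq L$, a detail the paper's proof leaves implicit.
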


\begin{proof}
Let $K$ be a finite subset of $G$, and let $H \in \H$ such that $\mathrm{cl}_\N(H) \cap K  = \emptyset$. One shall prove that $\mathrm{cl}_\N(H') \cap K  = \emptyset$ remains true for every $H'$ inside a neighbourhood of $H$ in $\H$. Let $g \in K$. By definition of $\mathrm{cl}_\N(H)$, there exists $N_g \in \mathcal{N}$ such that $g \notin H N_g $. Since $L = H N_g$ verifies that $(HN_g)^G$ is finite, according to Lemma \ref{lem-urs-clopen-profinite} one can find a neighbourhood $V_g$ of $H$ in $\H$ such that $H' \leq  HN_g$ for every $H' \in V_g$. A fortiori we have $H' N_g \leq  HN_g$ and hence $\mathrm{cl}_\N(H') \leq  HN_g$. Since $K$ is finite, taking the intersection over all $g \in K$ we obtain a neighbourhood $V$ of $H$ in $\H$ such that $\mathrm{cl}_\N(H') \leq \bigcap_{g \in K} HN_g$ for every $H' \in V$. Since $K$ does not intersect $\bigcap_{g \in K} HN_g$, the neighbourhood $V'$ satisfies the required property.
\end{proof}

\begin{rmq}
If $\mathcal{N}$ consists of finite index normal subgroups of $G$, then trivially every URS of $G$ is $\N$-finitary. Hence the previous proposition applies.
\end{rmq}

\begin{rmq} \label{rmq-p-adic-rationals}
Here we still consider the case where $\mathcal{N}$ consists of finite index normal subgroups of $G$, and we point out that in general the map $\sub(G) \to \sub(G)$, $H \mapsto \mathrm{cl}_\mathcal{N}(H)$, is \textit{not} upper semi-continuous. Therefore it is necessary to restrict to a URS in Proposition  \ref{prop-Nclosure-usc} in order to obtain upper semi-continuity. As an illustration, consider the group $G = \Z[1/p]$  of $p$-adic rational numbers. For $n \geq 1$, let $H_n = p^n \Z$. Then $G/H_n$ is a Prüfer $p$-group, and hence has no proper finite index subgroup. So $G$ has no proper finite index subgroup containing $H_n$, or equivalently $\mathrm{cl}(H_n) = G$. On the other hand $(H_n)$ converges to the trivial subgroup in $ \sub(G)$, which is closed for the profinite topology since $G$ is residually finite. Hence $H \mapsto \mathrm{cl}(H)$ is not upper semi-continuous. 
\end{rmq}

\begin{cor} \label{cor-closure-URS}
Let $\mathcal{N} \subseteq \N_G$, and $\H$ a URS of $G$ that is $\N$-finitary.  Then the set \[ \overline{\left\lbrace \mathrm{cl}_\N(H) : H \in \H \right\rbrace }\] contains a unique URS of $G$, that will be denoted $\mathrm{cl}_\N(\H)$. Moreover when $G$ is countable, there is a dense $G_\delta$ subset $\H_0 \subseteq \H$ such that \[ \mathrm{cl}_\N(\H) = \overline{\left\lbrace \mathrm{cl}_\N(H) : H \in \H_0 \right\rbrace }.\] 
\end{cor}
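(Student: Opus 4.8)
The plan is to apply Proposition \ref{prop-usc} to the map $\varphi = {\mathrm{cl}_\N}_{|\H} \colon \H \to 2^G$, where $G$ is viewed as a discrete (hence locally compact) $G$-space for the conjugation action, so that $\sub(G) \subseteq 2^G$. Since $\H$ is a URS, it is a minimal compact $G$-space, so the hypotheses of Proposition \ref{prop-usc} are met once we verify that $\varphi$ is $G$-equivariant and semi-continuous. Equivariance of $\mathrm{cl}_\N$ holds because every $N \in \N$ is normal (as already recorded in the introduction), and upper semi-continuity of $\varphi$ is precisely the content of Proposition \ref{prop-Nclosure-usc}, which applies because $\H$ is assumed to be $\N$-finitary. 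In the notation introduced before Proposition \ref{prop-usc}, taking $X = \H$ and $Y = G$ gives $T_\varphi = \overline{\{ \mathrm{cl}_\N(H) : H \in \H\}}$, the very set appearing in the statement.

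With this set-up, Proposition \ref{prop-usc}(i) furnishes a unique minimal closed $G$-invariant subset $S_\varphi'$ of $T_\varphi$. The one point requiring a (short) argument is that $S_\varphi'$ is genuinely a URS, i.e.\ that it lies inside $\sub(G)$ rather than merely inside $2^G$. This follows because each value $\mathrm{cl}_\N(H)$ is the $\tau_\N$-closure of a subgroup, hence itself a subgroup of $G$, so $\{\mathrm{cl}_\N(H) : H \in \H\} \subseteq \sub(G)$; as $\sub(G)$ is closed in $2^G$, the whole of $T_\varphi$ lies in $\sub(G)$. Thus $S_\varphi'$ is a minimal closed $G$-invariant subset of $\sub(G)$, that is, a URS. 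It is moreover the \emph{unique} URS contained in $T_\varphi$: any such URS is a minimal closed $G$-invariant subset of $T_\varphi$, of which there is exactly one by Proposition \ref{prop-usc}(i). We therefore set $\mathrm{cl}_\N(\H) := S_\varphi'$.

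For the second assertion, assume $G$ is countable, so that $G$ (as the target $Y$) is second-countable. Then the final clause of Proposition \ref{prop-usc} gives $S_\varphi' = S_\varphi = \overline{\{ \mathrm{cl}_\N(H) : H \in X_\varphi \}}$, where $X_\varphi \subseteq \H$ is the set of points at which $\varphi$ is continuous. By the theorem of Kuratowski cited just before Proposition \ref{prop-usc}, $X_\varphi$ is comeager in $\H$; since $G$ countable makes $\H$ a compact metrizable (hence Baire) space and $2^G$ metrizable, the set $X_\varphi$ is in addition a $G_\delta$ and is dense, so it is a dense $G_\delta$. Taking $\H_0 = X_\varphi$ yields $\mathrm{cl}_\N(\H) = S_\varphi = \overline{\{\mathrm{cl}_\N(H) : H \in \H_0\}}$, as required.

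In summary, the statement is essentially a formal consequence of Propositions \ref{prop-Nclosure-usc} and \ref{prop-usc} once $\varphi$ is identified, and the only genuinely non-formal step is the verification that $T_\varphi \subseteq \sub(G)$, ensuring that the abstract minimal subsystem produced by Proposition \ref{prop-usc} is a URS; the remaining work is the routine bookkeeping that $X_\varphi$ is a dense $G_\delta$.
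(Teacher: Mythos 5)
Your proposal is correct and follows essentially the same route as the paper: the paper's proof likewise consists of invoking Proposition \ref{prop-Nclosure-usc} for upper semi-continuity on $\H$ and then applying Proposition \ref{prop-usc}. The extra details you supply (that $T_\varphi \subseteq \sub(G)$, and that the continuity points form a dense $G_\delta$) are exactly the routine verifications the paper leaves implicit.
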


\begin{proof}
Proposition  \ref{prop-Nclosure-usc} asserts that $\H \to \sub(G)$, $H \mapsto \mathrm{cl}_\N(H)$, is upper semi-continuous. This allows to invoke Proposition  \ref{prop-usc}, from which the statement follows. 
\end{proof}

\begin{cor} \label{cor-closure-URS-closed}
	Let $\mathcal{N} \subseteq \N_G$, and $\H$ a URS of $G$ that is $\N$-finitary.  \begin{enumerate}
		\item \label{item-exists-closed-subgroup} If there exists $H \in \H$ such that $H = \mathrm{cl}_\N(H)$, then $\H = \mathrm{cl}_\N(\H)$.
		\item \label{item--closure-closed} If $G$ is countable, then $ \mathrm{cl}_\N(\mathrm{cl}_\N(\H)) = \mathrm{cl}_\N(\H)$. 
	\end{enumerate}
\end{cor}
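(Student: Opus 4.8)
The plan rests on two elementary properties of the map $\mathrm{cl}_\N$ on $\sub(G)$. First, it is $G$-equivariant: since every $N \in \N$ is normal, $g(HN)g^{-1} = (gHg^{-1})N$, and intersecting over $N \in \N$ gives $g\,\mathrm{cl}_\N(H)\,g^{-1} = \mathrm{cl}_\N(gHg^{-1})$. Second, it is idempotent on individual subgroups: for any subgroup $H$ and any $N \in \N$ one has $\mathrm{cl}_\N(H) \subseteq HN$, hence $\mathrm{cl}_\N(H)N \subseteq HN$, and in fact $\mathrm{cl}_\N(H)N = HN$ since conversely $H \subseteq \mathrm{cl}_\N(H)$; intersecting over $N$ then yields $\mathrm{cl}_\N(\mathrm{cl}_\N(H)) = \bigcap_{N} \mathrm{cl}_\N(H)N = \bigcap_{N} HN = \mathrm{cl}_\N(H)$. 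In particular every subgroup of the form $\mathrm{cl}_\N(H)$ is $\tau_\N$-closed.

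For statement (\ref{item-exists-closed-subgroup}), suppose $H \in \H$ satisfies $H = \mathrm{cl}_\N(H)$. By $G$-equivariance every conjugate $gHg^{-1}$ is again $\tau_\N$-closed, so the whole orbit $H^G$ is contained in $\left\lbrace \mathrm{cl}_\N(K) : K \in \H \right\rbrace$. Since $\H$ is minimal, $H^G$ is dense in $\H$, whence $\H = \overline{H^G} \subseteq \overline{\left\lbrace \mathrm{cl}_\N(K) : K \in \H \right\rbrace}$. As $\H$ is a URS contained in the latter set, the uniqueness clause of Corollary \ref{cor-closure-URS} forces $\mathrm{cl}_\N(\H) = \H$.

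For statement (\ref{item--closure-closed}) I would reduce to (\ref{item-exists-closed-subgroup}) applied to the URS $\mathrm{cl}_\N(\H)$. Using countability, the second part of Corollary \ref{cor-closure-URS} provides a dense $G_\delta$ subset $\H_0 \subseteq \H$ with $\mathrm{cl}_\N(\H) = \overline{\left\lbrace \mathrm{cl}_\N(H) : H \in \H_0 \right\rbrace}$; in particular $\mathrm{cl}_\N(H) \in \mathrm{cl}_\N(\H)$ for every $H \in \H_0$. Fixing any such $H$ and setting $K = \mathrm{cl}_\N(H)$, the idempotence above gives $K = \mathrm{cl}_\N(K)$, so $\mathrm{cl}_\N(\H)$ contains a $\tau_\N$-closed subgroup. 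Applying statement (\ref{item-exists-closed-subgroup}) to the URS $\mathrm{cl}_\N(\H)$ then yields $\mathrm{cl}_\N(\mathrm{cl}_\N(\H)) = \mathrm{cl}_\N(\H)$.

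The one point that must be secured before invoking (\ref{item-exists-closed-subgroup}) in the last step is that $\mathrm{cl}_\N(\H)$ is itself $\N$-finitary, so that $\mathrm{cl}_\N(\cdot)$ is defined on it; establishing this is the main obstacle. When $\N$ consists of finite index subgroups it is automatic: for any $K$ and any $N \in \N$ the subgroup $KN$ contains the finite index subgroup $N$, and $G/N$ being finite there are only finitely many subgroups of $G$ containing $N$, so $(KN)^G$ is finite. In general I would aim to show that every $K \in \mathrm{cl}_\N(\H)$ and $N \in \N$ satisfy that $(KN)^G$ is finite by comparing $KN$ with the images $\left\lbrace HN : H \in \H \right\rbrace$, which all have finite orbits since $\H$ is $\N$-finitary. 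The relevant inputs are that $K \mapsto KN$ is $G$-equivariant and lower semi-continuous (Lemma \ref{lem-quotient-urs-lsc}), that the identity $\mathrm{cl}_\N(H)N = HN$ holds for every subgroup, and that on the dense set $\left\lbrace \mathrm{cl}_\N(H) : H \in \H_0 \right\rbrace$ the value $KN$ already equals such an image $HN$; transferring this control to every point of the minimal system $\mathrm{cl}_\N(\H)$, via the highly proximal extension supplied by Proposition \ref{prop-usc}, is the delicate step.
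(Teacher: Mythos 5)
Your treatment of (\ref{item-exists-closed-subgroup}), and your reduction of (\ref{item--closure-closed}) to (\ref{item-exists-closed-subgroup}) via the dense $G_\delta$ set from Corollary \ref{cor-closure-URS} together with the idempotence identity $\mathrm{cl}_\N(\mathrm{cl}_\N(H)) = \mathrm{cl}_\N(H)$, is exactly the paper's proof: the paper phrases (\ref{item-exists-closed-subgroup}) as ``the hypothesis makes $\H \cap \overline{\{\mathrm{cl}_\N(K) : K \in \H\}}$ non-empty, so minimality forces $\H \subseteq \overline{\{\mathrm{cl}_\N(K) : K \in \H\}}$, and uniqueness in Corollary \ref{cor-closure-URS} concludes''; your orbit-density phrasing is the same argument.

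The point you leave open is the only real difference, and you are right that it matters: to apply (\ref{item-exists-closed-subgroup}) to the URS $\K = \mathrm{cl}_\N(\H)$ one needs $\K$ to be $\N$-finitary --- indeed the expression $\mathrm{cl}_\N(\mathrm{cl}_\N(\H))$ is only defined under that hypothesis --- and the paper's one-sentence proof of (\ref{item--closure-closed}) is silent about this. However, the fix is much shorter than the route you sketch; no highly proximal extensions or transfer along the minimal system are needed, only a squeeze. Let $K \in \overline{\{\mathrm{cl}_\N(H) : H \in \H\}}$, and write $K = \lim_i \mathrm{cl}_\N(H_i)$ for a net with $H_i \in \H$; after passing to a subnet, assume $H_i \to H \in \H$. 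Upper semi-continuity (Proposition \ref{prop-Nclosure-usc}) gives $K \leq \mathrm{cl}_\N(H)$: if $g \notin \mathrm{cl}_\N(H)$, then eventually $g \notin \mathrm{cl}_\N(H_i)$, so $g \notin K$. On the other hand $H \leq K$: if $g \in H$, then eventually $g \in H_i \leq \mathrm{cl}_\N(H_i)$, so $g \in K$. Hence $H \leq K \leq \mathrm{cl}_\N(H)$, and for every $N \in \N$ your identity $\mathrm{cl}_\N(H)N = HN$ yields
\[ HN \leq KN \leq \mathrm{cl}_\N(H)N = HN, \]
so $KN = HN$ and $(KN)^G = (HN)^G$ is finite. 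Thus every subgroup in $\overline{\{\mathrm{cl}_\N(H) : H \in \H\}}$, in particular every element of $\mathrm{cl}_\N(\H)$, has the required finiteness property, so $\mathrm{cl}_\N(\H)$ is $\N$-finitary and your application of (\ref{item-exists-closed-subgroup}) goes through. Note that the squeeze gives a little more: $\mathrm{cl}_\N(K) = \bigcap_N KN = \bigcap_N HN = \mathrm{cl}_\N(H)$ for every such $K$, so $\overline{\{\mathrm{cl}_\N(K) : K \in \K\}} \subseteq \overline{\{\mathrm{cl}_\N(H) : H \in \H\}}$, and the uniqueness clause of Corollary \ref{cor-closure-URS} alone then forces $\mathrm{cl}_\N(\K) = \K$; so statement (\ref{item--closure-closed}) in fact holds without the countability hypothesis.
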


\begin{proof}
The assumption in (\ref{item-exists-closed-subgroup}) implies that \[ \H \cap \overline{\left\lbrace \mathrm{cl}_\N(H) : H \in \H \right\rbrace } \neq \emptyset. \] So by minimality $\H$ is contained in $\overline{\left\lbrace \mathrm{cl}_\N(H) : H \in \H \right\rbrace }$. Corollary  \ref{cor-closure-URS} then implies $\H = \mathrm{cl}_\N(\H)$. (\ref{item--closure-closed}) follows from the second statement in Corollary  \ref{cor-closure-URS} and  (\ref{item-exists-closed-subgroup}). 
\end{proof}

\begin{defi}
Suppose that $\N$ is filtering, and  let $\H$ be a URS of  $G$ that is $\N$-finitary.
We say that a URS $\H$ is closed for the topology $\tau_\N$ if $\mathrm{cl}_\N(\H) = \H$. 
\end{defi}

\section{On  profinite closures of a URS} \label{sec-prof-closure}

\subsection{Profinitely closed URSs and profinite actions}

In all this section we assume that  $\mathcal{N} \subseteq \N_G$ is filtering, and that $\N$ consists of finite index subgroups of $G$. Let $\widehat{G}^\N $ be the inverse limit of the inverse system of finite groups  $G/N$, $N \in \N$, and $\psi: G \to \widehat{G}^\N$ the associated canonical group homomorphism (for simplicity we omit $\N$ in the notation in $\psi$). The group $\widehat{G}^\N $ is profinite, and $\psi: G \to \widehat{G}^\N$ is continuous, where $G$ is equipped with the topology $\tau_\N$. Recall that if $H$ is a subgroup of $G$, one has $\psi^{-1}(\overline{\psi(H)}) = \cl_\N(H)$.

\begin{prop} \label{prop-stab-urs-completion}
Let $\N$ be as above.	Then the following hold:
	\begin{enumerate}
		\item \label{item-prof-closed-uniqueURS} Let $H$ be a subgroup of $G$ such that $H = \cl_\N(H)$. Then the closure of the conjugacy class of $H$ contains a unique URS.	
		\item \label{item-prof-coset}  Let $\H$ be a URS of $G$. Let $H \in \H$, and $L = \overline{\psi(H)}$. Then the stabilizer URS associated to the left translation action of $G$ on $\widehat{G}^\N / L$ s equal to  $\cl_\N(\H)$. 
	\end{enumerate}
\end{prop}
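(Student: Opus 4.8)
The plan is to treat both statements through a single geometric object: for a closed subgroup $L \le \widehat{G}^\N$, the left translation action of $G$ (through $\psi$) on the profinite coset space $X_L := \widehat{G}^\N / L$. First I would record two elementary facts. Since $\psi(G)$ is dense in $\widehat{G}^\N$ and, for any $x \in X_L$, the orbit map $\widehat{G}^\N \to X_L$, $h \mapsto h \cdot x$, is continuous and surjective (the ambient profinite group acting transitively), the image of the dense set $\psi(G)$ is dense; hence every $G$-orbit in $X_L$ is dense and $G \acts X_L$ is minimal. Second, a direct stabilizer computation: $g' \in G$ fixes $\psi(g) L$ if and only if $\psi(g^{-1} g' g) \in L$, i.e.\ if and only if $g' \in g\, \psi^{-1}(L)\, g^{-1}$; so along the dense orbit $G \cdot L$ the point stabilizers are exactly the $G$-conjugates of $\psi^{-1}(L)$. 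As $G \acts X_L$ is minimal, Proposition~\ref{prop-stab-URS} together with Proposition~\ref{prop-usc} applies, and $\overline{\{G_x : x \in X_L\}}$ contains a unique URS, namely $S_G(X_L)$.

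For statement~(\ref{item-prof-closed-uniqueURS}), I would take $L = \overline{\psi(H)}$, so that $\psi^{-1}(L) = \cl_\N(H) = H$ by the hypothesis $H = \cl_\N(H)$ and the recalled identity $\psi^{-1}(\overline{\psi(H)}) = \cl_\N(H)$. By the stabilizer computation, the conjugacy class $H^G$ equals $\{G_x : x \in G \cdot L\}$, hence $\overline{H^G} \subseteq \overline{\{G_x : x \in X_L\}} =: T$. Since $T$ contains a unique URS, every URS contained in the closed $G$-invariant set $\overline{H^G} \subseteq T$ must coincide with it, while existence of at least one URS in $\overline{H^G}$ follows from Zorn's lemma. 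This yields that $\overline{H^G}$ contains a unique URS, equal to $S_G(X_L)$.

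For statement~(\ref{item-prof-coset}), I would reduce to the first part by applying it to the subgroup $K := \cl_\N(H)$, which is $\cl_\N$-closed by idempotence and satisfies $\overline{\psi(K)} = \overline{\psi(H)} = L$. Part~(\ref{item-prof-closed-uniqueURS}) then says that the unique URS in $\overline{K^G}$ is $S_G(\widehat{G}^\N / \overline{\psi(K)}) = S_G(X_L)$, exactly the object in the statement. It remains to identify this URS with $\cl_\N(\H)$. Using $G$-equivariance of $\cl_\N$ one has $K^G = \{g\,\cl_\N(H)\,g^{-1} : g \in G\} = \{\cl_\N(H') : H' \in H^G\}$, and since $H^G \subseteq \H$ this gives the inclusion of closed $G$-invariant sets $\overline{K^G} \subseteq \overline{\{\cl_\N(H') : H' \in \H\}}$. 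The right-hand side contains the unique URS $\cl_\N(\H)$ by Corollary~\ref{cor-closure-URS}; since $S_G(X_L) \subseteq \overline{K^G}$ is a URS sitting inside it, uniqueness forces $S_G(X_L) = \cl_\N(\H)$.

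The routine parts are the stabilizer calculation and the bookkeeping of ``unique URS inside a closed invariant set''. The step I expect to require the most care is the minimality of $G \acts X_L$: this is precisely where the density of $\psi(G)$ in the profinite completion enters, and it is what licenses the application of Proposition~\ref{prop-usc}; without minimality the set $T$ need not contain a unique URS. A secondary point to get right is the compatibility $\psi^{-1}(\overline{\psi(H)}) = \cl_\N(H)$ and the equivariance identity $g\,\cl_\N(H)\,g^{-1} = \cl_\N(gHg^{-1})$, which is what lets part~(\ref{item-prof-coset}) be deduced cleanly from part~(\ref{item-prof-closed-uniqueURS}) rather than re-run from scratch.
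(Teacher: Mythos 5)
Your proof is correct and follows essentially the same route as the paper's: both rest on the coset space $X_L = \widehat{G}^\N/L$, its minimality coming from density of $\psi(G)$, the identification of the stabilizer of the coset $L$ with $\psi^{-1}(L) = \cl_\N(H)$, and the uniqueness of the URS inside $\overline{\left\lbrace G_x : x \in X_L \right\rbrace}$ via Propositions \ref{prop-stab-URS} and \ref{prop-usc}. The only (inessential) difference is in part (\ref{item-prof-coset}): the paper notes that $\overline{\left\lbrace G_x : x \in X_L \right\rbrace}$ and $\overline{\left\lbrace \cl_\N(K) : K \in \H \right\rbrace}$ share the point $\cl_\N(H)$ and concludes that their unique URSs coincide, whereas you reduce to part (\ref{item-prof-closed-uniqueURS}) applied to $K = \cl_\N(H)$ and use the inclusion $\overline{K^G} \subseteq \overline{\left\lbrace \cl_\N(K') : K' \in \H \right\rbrace}$ together with Corollary \ref{cor-closure-URS} --- both are short bookkeeping steps on top of the same core argument.
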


\begin{proof}
Write $L = \overline{\psi(H)}$ and $X = \widehat{G}^\N / L$, which is a minimal compact $G$-space since $G$ has dense image in $\widehat{G}$. The stabilizer of the coset $L \in X$ in $G$ is $\psi^{-1}(L) = \cl_\N(H)$. So in case $H = \cl_\N(H)$, one has \[ \overline{H^G} \subseteq \overline{ \left\{ G_x \, : \, x \in X \right\}} . \] By Zorn's lemma $\overline{H^G}$ contains at least one URS, and it follows that is contains exactly one because  $\overline{ \left\{ G_x \, : \, x \in X \right\}} $ has this property by Proposition \ref{prop-stab-URS}. Hence (\ref{item-prof-closed-uniqueURS}) holds. 

For (\ref{item-prof-coset}), we have \[ \overline{ \left\{ G_x \, : \, x \in X \right\}} \cap \overline{ \left\{ \cl_\N(K) \, : \, K \in \H \right\}} \neq \emptyset. \] Each one of these two sets contains a unique URS, namely $S_G(X)$ and $\cl_\N(\H)$. Hence equality $S_G(X) = \cl_\N(\H)$ follows.
\end{proof}

\begin{prop} \label{prop-caract-pro-N-space}
Let $\N$ be as above, and let $X$ be a compact totally disconnected $G$-space. The following are equivalent:
	\begin{enumerate}
		\item \label{item-prof} $G \times X \to X$ is continuous, where $G$ is equipped with the topology $\tau_\N$;
		\item \label{item-clopen-finite-orb} for every clopen subset $U$ of $X$, the stabilizer of $U$ in $G$ is open for the topology $\tau_\N$;
		\item \label{item-extends-completion} $G \times X \to X$ extends to a continuous action of $\widehat{G}^\N$ on $X$.
	\end{enumerate}
If $X$ is a minimal $G$-space, these are also equivalent to:
\begin{enumerate}[resume] 
	\item there exists a closed subgroup $L$ of $\widehat{G}^\N$ such that $X$ is isomorphic to $\widehat{G}^\N / L$ as a $G$-space (where $G$ acts on $\widehat{G}^\N / L$ by left translations).
\end{enumerate}
\end{prop}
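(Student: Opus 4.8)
The plan is to establish the cycle of implications $(1)\Rightarrow(2)\Rightarrow(3)\Rightarrow(1)$ among the three general conditions, and then, under the minimality hypothesis, to add $(3)\Rightarrow(4)$ together with the trivial $(4)\Rightarrow(3)$. Throughout I would rely on two basic features of the completion: the image $\psi(G)$ is dense in $\widehat{G}^\N$, and $\tau_\N$ is exactly the initial topology induced by $\psi$ from the profinite topology on $\widehat{G}^\N$, since the basic $\tau_\N$-neighbourhoods $N\in\N$ of the identity are precisely the preimages under $\psi$ of the defining open subgroups of $\widehat{G}^\N$. In particular $\psi\colon (G,\tau_\N)\to \widehat{G}^\N$ is continuous, which makes $(3)\Rightarrow(1)$ immediate: precomposing a continuous action $\widehat{G}^\N\times X\to X$ with $\psi\times\mathrm{id}$ yields a continuous map $G\times X\to X$ for the topology $\tau_\N$.

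For $(1)\Rightarrow(2)$ I would fix a clopen $U\subseteq X$ and use that $\mathrm{Stab}_G(U)$ is a subgroup, so that it suffices to exhibit some $N\in\N$ fixing $U$ setwise (a subgroup is $\tau_\N$-open as soon as it contains such an $N$). For each $x\in X$, continuity of the action at $(e,x)$ provides a basic neighbourhood $N_x\in\N$ of the identity and a clopen $U_x\ni x$ with $N_xU_x$ contained in $U$ if $x\in U$ and in $X\setminus U$ otherwise; since $e\in N_x$, the set $U_x$ lies entirely on one side of $U$. Covering the compact space $X$ by finitely many $U_{x_i}$ and using that $\N$ is filtering to find a single $N\in\N$ below all the $N_{x_i}$, I would conclude that $N$ preserves both $U$ and $X\setminus U$, whence $N\leq\mathrm{Stab}_G(U)$ and the stabilizer is $\tau_\N$-open.

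The step $(2)\Rightarrow(3)$ is the technical heart, and I expect it to be the main obstacle. Assuming every clopen stabilizer is $\tau_\N$-open, such a stabilizer contains some $N\in\N$ and hence has finite index, so the $G$-orbit of each clopen subset of $X$ is finite; intersecting the finitely many stabilizers along an orbit and invoking the filtering property shows that the $G$-action on that orbit factors through a finite quotient $G/N$ with $N\in\N$, hence through $\widehat{G}^\N$. This lets me define an action of $\widehat{G}^\N$ on the Boolean algebra of clopen subsets of $X$ by Boolean-algebra automorphisms, extending the $G$-action, and by Stone duality this corresponds to an action of $\widehat{G}^\N$ on $X$ by homeomorphisms extending the original one. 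The remaining point is continuity of $\widehat{G}^\N\times X\to X$: for every clopen $W$ the assignment $\hat g\mapsto \hat g W$ is locally constant, as it factors through a finite quotient of $\widehat{G}^\N$, so given $(\hat g_0,x_0)$ and a clopen $V\ni \hat g_0 x_0$ one finds a neighbourhood of $\hat g_0$ fixing the clopen set $\hat g_0^{-1}V$, and then $\hat g_0^{-1}V$ is a clopen neighbourhood of $x_0$ whose image lands in $V$. The delicate part is to organise the finiteness of orbits coherently so that Stone duality cleanly produces a genuine, continuous $\widehat{G}^\N$-action.

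Finally, under minimality, $(4)\Rightarrow(3)$ holds since left translation of $\widehat{G}^\N$ on $\widehat{G}^\N/L$ is continuous for any closed $L$. For $(3)\Rightarrow(4)$ I would first note that the continuous $\widehat{G}^\N$-action on $X$ is transitive: each $\widehat{G}^\N$-orbit contains a dense $G$-orbit and is closed, being the continuous image of the compact group $\widehat{G}^\N$, hence equals $X$. Fixing a base point $x_0$ with closed stabilizer $L=\mathrm{Stab}_{\widehat{G}^\N}(x_0)$, the orbit map $\hat g L\mapsto \hat g x_0$ is a continuous $\widehat{G}^\N$-equivariant bijection $\widehat{G}^\N/L\to X$ from a compact space to a Hausdorff space, so it is a homeomorphism; restricting equivariance along $\psi$ gives the desired $G$-space isomorphism.
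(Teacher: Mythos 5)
Your proposal is correct, and its overall skeleton — the cycle $(\ref{item-prof})\Rightarrow(\ref{item-clopen-finite-orb})\Rightarrow(\ref{item-extends-completion})\Rightarrow(\ref{item-prof})$, with $(\ref{item-extends-completion})\Rightarrow(\ref{item-prof})$ via continuity of $\psi$, and item (4) obtained from transitivity of a minimal action of a compact group — coincides with the paper's. Where you genuinely diverge is the key implication $(\ref{item-clopen-finite-orb})\Rightarrow(\ref{item-extends-completion})$. The paper handles it in two lines: it takes $L$ to be the closure of the image of $G$ in $\mathrm{Homeo}(X)$, notes that $L$ is profinite because every clopen subset has finite $G$-orbit, and invokes the universal property of $\widehat{G}^\N$ (citing Wilson's book) to extend the continuous homomorphism $G \to L$ to a continuous homomorphism $\widehat{G}^\N \to L$; joint continuity of the action then comes for free from continuity of the evaluation map $\mathrm{Homeo}(X)\times X \to X$. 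You instead construct the extension by hand: finite clopen orbits whose pointwise stabilizers contain members of $\N$, an induced action of $\widehat{G}^\N$ on the Boolean algebra of clopen sets, Stone duality, and an explicit local-constancy argument for joint continuity. Both arguments pivot on exactly the same observation (clopen stabilizers are $\tau_\N$-open, hence clopen orbits are finite with kernels containing elements of $\N$); the paper's version outsources compactness and continuity to standard profinite machinery, while yours is self-contained at the cost of more bookkeeping. The step you flag as delicate is in fact routine: given finitely many clopen orbits, the filtering property yields a single $N\in\N$ in the kernel of the $G$-action on all of them, and since $G$ surjects onto $G/N$, every element of $\widehat{G}^\N$ acts on these orbits exactly as some element of $G$ does; this gives well-definedness, multiplicativity and the Boolean-automorphism property simultaneously. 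Finally, your compactness argument for $(\ref{item-prof})\Rightarrow(\ref{item-clopen-finite-orb})$ just spells out what the paper records as a consequence of the definitions.
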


\begin{proof}
	Since $X$ is totally disconnected, clopen subsets form a basis of the topology on $X$. The equivalence between (\ref{item-prof}) and (\ref{item-clopen-finite-orb}) is therefore a consequence of the definitions. Suppose these conditions hold, and let $L$ be the closure of the image of $G$ in the group $\mathrm{Homeo}(X)$. Since it follows in particular from (\ref{item-clopen-finite-orb}) that every clopen subset of $X$ has a finite $G$-orbit, the group 
	$L$ is a profinite group. Since $G \to L$ is continuous, by the universal property of $\widehat{G}^\N$ \cite[Prop.\ 1.4.1--1.4.2]{Wilson-book}, $G \to L$ extends to a continuous homomorphism $\widehat{G}^\N \to L$. So (\ref{item-extends-completion}) holds. 	Finally  (\ref{item-extends-completion})  implies  (\ref{item-prof}) because $\psi: G \to \widehat{G}^\N$ is continuous.

	The last statement is clear since a minimal continuous action of a compact group on a compact space is necessarily transitive.
\end{proof}

\begin{defi} \label{defi-proN}
The $G$-action on $X$ is called \textbf{pro-$\N$} if it satisfies the equivalent conditions (\ref{item-prof})-(\ref{item-clopen-finite-orb})-(\ref{item-extends-completion}). We also say that the $G$-space $X$ is pro-$\N$.
\end{defi}

In case where $\N$ consists of all finite index normal subgroups of $G$, this corresponds to the common notion of profinite $G$-space. 

\begin{prop} \label{prop-URS-profinitelyclosed-caract}
Let $\H$ be a URS of a countable group $G$. Then the following are equivalent: \begin{enumerate}
	\item \label{item-H-closed} $\H$ is closed for the topology $\tau_\N$. 
	\item \label{item-H-comes-pro-comple}  For every $H \in \H$, the stabilizer URS associated to the left translation action of $G$ on $\widehat{G}^\N/\overline{\psi(H)}$ is equal to  $\H$. 
	\item \label{item-H-comes-pro}  There exists a minimal $G$-space $X$ that is pro-$\N$ such that $S_G(X) = \H$. 
\end{enumerate} 
\end{prop}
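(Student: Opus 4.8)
The plan is to establish the cycle of implications $(\ref{item-H-closed}) \Rightarrow (\ref{item-H-comes-pro-comple}) \Rightarrow (\ref{item-H-comes-pro}) \Rightarrow (\ref{item-H-closed})$. Throughout I would use that $\N$ consists of finite index subgroups, so that $\H$ is automatically $\N$-finitary and all the results of the preceding section apply. The first two implications are essentially bookkeeping against the results already proved. For $(\ref{item-H-closed}) \Rightarrow (\ref{item-H-comes-pro-comple})$, I would feed the hypothesis $\cl_\N(\H) = \H$ directly into Proposition \ref{prop-stab-urs-completion}(\ref{item-prof-coset}): for any $H \in \H$ with $L = \overline{\psi(H)}$, that proposition identifies the stabilizer URS of $G \acts \widehat{G}^\N/L$ with $\cl_\N(\H)$, which equals $\H$ by assumption. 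For $(\ref{item-H-comes-pro-comple}) \Rightarrow (\ref{item-H-comes-pro})$, I would fix any $H \in \H$ and take $X = \widehat{G}^\N/\overline{\psi(H)}$; this space is minimal since $\psi(G)$ is dense in $\widehat{G}^\N$, it is pro-$\N$ because it has the form $\widehat{G}^\N/L$ (the last clause of Proposition \ref{prop-caract-pro-N-space}), and $S_G(X) = \H$ by hypothesis, so $X$ witnesses (\ref{item-H-comes-pro}).

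The substance lies in $(\ref{item-H-comes-pro}) \Rightarrow (\ref{item-H-closed})$. Given a minimal pro-$\N$ space $X$ with $S_G(X) = \H$, Proposition \ref{prop-caract-pro-N-space} lets me replace $X$ by $\widehat{G}^\N/L$ for some closed subgroup $L \leq \widehat{G}^\N$. The point-stabilizers are then $G_{\hat{g}L} = \psi^{-1}(\hat{g}L\hat{g}^{-1})$, and since $\hat{g}L\hat{g}^{-1}$ is a closed subgroup and $\psi^{-1}(\overline{\psi(\cdot)}) = \cl_\N(\cdot)$, every such stabilizer is $\tau_\N$-closed. To conclude I would exhibit one such stabilizer inside $\H$ itself and then apply Corollary \ref{cor-closure-URS-closed}(\ref{item-exists-closed-subgroup}), which upgrades a single fixed point of $\cl_\N$ in $\H$ to the full equality $\cl_\N(\H) = \H$. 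To locate a genuine stabilizer in $\H$, I would use that $\sub(G)$ is second-countable (as $G$ is countable) together with the continuity-points statement preceding Proposition \ref{prop-usc}: the stabilizer map $S \colon X \to \sub(G)$ is upper semi-continuous by Proposition \ref{prop-stab-URS}, so its set $X_\varphi$ of continuity points is comeager, hence non-empty, and Proposition \ref{prop-usc} yields $\H = S_G(X) = \overline{\{G_x : x \in X_\varphi\}}$. In particular $\H$ contains an honest point-stabilizer $G_x$, which is $\tau_\N$-closed by the computation above, closing the cycle.

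The hard part is precisely this last step of producing an actual point-stabilizer within $\H$ rather than merely a limit of stabilizers: a limit of $\tau_\N$-closed subgroups need not be $\tau_\N$-closed, which is exactly the failure of upper semi-continuity illustrated in Remark \ref{rmq-p-adic-rationals}. Once the comeager set of continuity points is in hand the conclusion is immediate, but it is this semicontinuity input, valid thanks to the countability of $G$, that prevents the argument from collapsing at the passage to the limit.
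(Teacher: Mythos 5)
Your proof is correct and follows essentially the same route as the paper's: the equivalence of (\ref{item-H-closed}) and (\ref{item-H-comes-pro-comple}) via Proposition \ref{prop-stab-urs-completion}, the easy passage to (\ref{item-H-comes-pro}), and, for the key implication (\ref{item-H-comes-pro}) $\Rightarrow$ (\ref{item-H-closed}), the identification $X \cong \widehat{G}^\N/L$ from Proposition \ref{prop-caract-pro-N-space}, the $\tau_\N$-closedness of point stabilizers, the comeager (hence dense) set of continuity points of the stabilizer map --- valid by countability of $G$ --- to locate an honest stabilizer inside $\H$, and Corollary \ref{cor-closure-URS-closed}(\ref{item-exists-closed-subgroup}) to conclude. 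The only cosmetic difference is that you arrange the three implications in a cycle rather than proving (\ref{item-H-closed}) $\Leftrightarrow$ (\ref{item-H-comes-pro-comple}) directly, and you spell out details (such as why stabilizers of cosets are $\tau_\N$-closed) that the paper states without proof.
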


\begin{proof}
Proposition \ref{prop-stab-urs-completion} implies that (\ref{item-H-closed}) and (\ref{item-H-comes-pro-comple}) are equivalent.  (\ref{item-H-comes-pro-comple}) clearly implies (\ref{item-H-comes-pro}), so we only have to see that (\ref{item-H-comes-pro}) implies  (\ref{item-H-closed}). Let $X$ be a minimal  $G$-space that is pro-$\N$ that admits $\H$ as a stabilizer URS. By Proposition \ref{prop-caract-pro-N-space} there exists a closed subgroup $L$ of $\widehat{G}^\N$ such that $X$ is isomorphic to $\widehat{G}^\N / L$ as a $G$-space. The stabilizers in $G$ for the action on $\widehat{G}^\N / L$ are closed for the  topology $\tau_\N$ on $G$. Since $G$ is countable, there is a dense set of points $x$ in $\widehat{G}^\N / L$ such that $G_x \in S_G(\widehat{G}^\N / L)$ (Proposition  \ref{prop-stab-URS}). Since $S_G(\widehat{G}^\N / L)$ is equal to $\H$ by assumption, it follows that $\H$ contains some elements that are closed for the  topology $\tau_\N$. By Corollary  \ref{cor-closure-URS-closed} this implies $\H = \cl(\H)$. 
\end{proof}

\begin{rmq}
Matte Bon--Tsankov and Elek showed that every URS $\H$ is equal to the stabilizer URS associated to some compact $G$-space \cite{MB-Tsank,Elek}, and among the compact $G$-spaces associated to $\H$ there is a unique one that is universal in a certain sense \cite{MB-Tsank}. We point out that this $G$-space is very different from the $G$-space $\widehat{G}^\N/\overline{\psi(H)}$ associated to the specific setting considered in Proposition \ref{prop-URS-profinitelyclosed-caract}. 
\end{rmq}

\begin{rmq}
In the case of the profinite topology, Proposition \ref{prop-URS-profinitelyclosed-caract} says that a URS $\H$ is closed for the profinite topology if and only if $\H$ is the stabilizer URS associated to a minimal profinite $G$-space. It is worth noting that if $\H$ is such a URS, then the $G$-action on $\H$ need \textit{not} be profinite. Such a phenomenon has been exploited by Matte Bon \cite{MatteBon-bounded-aut} and Nekrashevych \cite{Nek-subst}. 
\end{rmq}

We end this section by showing that in general the restriction of $\cl$ to a URS is not continuous. Recall that a closed subset $F$ of a space $X$ is regular if $F$ equals the closure of its interior. 

\begin{lem} \label{lem-closure-germ-stab}
Let  $X$ be a compact minimal $G$-space such that $\mathrm{Fix}_X(g)$ is a regular closed set of $X$ for every $g \in G$. Then for every $x \in X$, we have $G_x \leq \cl(G_x^0)$. 
\end{lem}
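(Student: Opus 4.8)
The plan is to unwind the definition of the profinite closure and argue one finite quotient at a time. Fix $g \in G_x$ and a finite index normal subgroup $N$ of $G$; since $\cl(G_x^0) = \bigcap_N G_x^0 N$, it suffices to prove $g \in G_x^0 N$, equivalently that the image of $g$ in the finite group $G/N$ lies in the image of $G_x^0$. I would set $\Omega = \mathrm{int}(\mathrm{Fix}_X(g))$; regularity of $\mathrm{Fix}_X(g)$ gives two things at once: $\Omega$ is nonempty (otherwise $\mathrm{Fix}_X(g) = \overline{\Omega}$ would be empty, contradicting $x \in \mathrm{Fix}_X(g)$), and $x \in \mathrm{Fix}_X(g) = \overline{\Omega}$, so every neighbourhood of $x$ meets $\Omega$. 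Note also that any point of $\Omega$ has a neighbourhood fixed pointwise by $g$, so $g \in G_z^0$ for all $z \in \Omega$.

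The geometric core is to locate an element $t$ of the subgroup $P := G_x^0 N$ with $tx \in \Omega$. Here I would use that $N$ is normal of finite index: the space $X$ decomposes into finitely many $N$-minimal components, which are clopen, pairwise disjoint and transitively permuted by $G$; in particular the component $Y$ containing $x$ equals $\overline{Nx}$ and is an open neighbourhood of $x$. Since $G_x^0 \leq G_x$ fixes $x$ it stabilizes $Y$, so $P = G_x^0 N$ stabilizes $Y$ and $\overline{Px} = Y$. Because $Y$ is an open neighbourhood of $x$ and $x \in \overline{\Omega}$, the open set $Y \cap \Omega$ is nonempty; as $Px$ is dense in $Y$, there is $t \in P$ with $tx \in \Omega$.

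To conclude I would transport germ-triviality along $t$. From $tx \in \Omega$ and the identity $G_{tx}^0 = t G_x^0 t^{-1}$ we get $t^{-1} g t \in G_x^0$. Passing to $G/N$ and writing $\pi$ for the projection, $\pi(t) \in \pi(P) = \pi(G_x^0)$ (as $\pi$ kills $N$), and $\pi(t^{-1} g t) = \pi(t)^{-1}\pi(g)\pi(t) \in \pi(G_x^0)$; since $\pi(G_x^0)$ is a subgroup of $G/N$ containing $\pi(t)$, conjugating back yields $\pi(g) \in \pi(G_x^0)$, i.e.\ $g \in G_x^0 N$. As $N$ was arbitrary this gives $g \in \cl(G_x^0)$, and as $g \in G_x$ was arbitrary, $G_x \leq \cl(G_x^0)$.

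The main obstacle, and the reason the statement is not a one-line consequence of minimality, is the final manipulation in $G/N$: a naive choice of $t$ bringing an orbit point of $x$ into $\Omega$ only shows that \emph{some} conjugate of the image of $g$ lies in $\pi(G_x^0)$, which is too weak. The content is to force $t$ into the finite-index subgroup $G_x^0 N$, so that $\pi(t)$ itself lies in $\pi(G_x^0)$ and the conjugation becomes harmless. This is exactly what the $N$-component decomposition delivers, via the observation that $G_x^0$ preserves the component of $x$; verifying that these components are clopen and that $\overline{Px} = Y$ is the step I would carry out most carefully.
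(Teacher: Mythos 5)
Your proof is correct and takes essentially the same route as the paper: both arguments use the finite partition of $X$ into clopen $N$-minimal components, regularity of $\mathrm{Fix}_X(g)$ to produce a nonempty open set of fixed points inside the component $Y$ containing $x$, and density of an orbit of $x$ in $Y$ to move $x$ into that open set. The only cosmetic difference is at the end: the paper chooses the moving element $h$ in $N$ itself (minimality of the $N$-action on $Y$ suffices), so that $g \in hG_x^0h^{-1} \subseteq G_x^0N$ follows at once from normality of $N$ and no conjugation in $G/N$ is needed --- in particular the ``obstacle'' you describe in your last paragraph dissolves if one simply takes $t \in N$, which is a legitimate (indeed more restrictive) choice than $t \in G_x^0N$.
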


\begin{proof}
Fix $x \in X$, $g \in G_x$, and a finite index normal subgroup $N$ of $G$. We want to see that $g \in G_x^0 N$. Since $N$ has finite index in $G$ and $G$ acts minimally on $X$, each minimal closed $N$-invariant subset of $X$ is clopen, and the minimal closed $N$-invariant subsets form a finite partition $\left\lbrace U_1, \ldots, U_n \right\rbrace $ of $X$. Let $U_i$ be the one containing $x$. Since $U_i$ is a neighbourhood of $x$ and $g \in G_x$, the assumption that $\mathrm{Fix}_X(g)$ is regular implies that there exists a non-empty open subset $V \subseteq U_i$ on which $g$ acts trivially. Since $N$ acts minimally on $U_i$, one can find $h \in N$ such that $y = hx \in V$. It follows that $g \in G_y^0 = h G_x^0h^{-1}$, and since $h \in N$ we deduce that $g \in G_x^0 N$.
\end{proof}

\begin{prop}
Suppose that $G$ is countable. Let $X$ be a minimal profinite compact $G$-space, and $\H = S_G(X)$. Suppose that $\mathrm{Fix}_X(g)$ is a regular closed set of $X$ for every $g \in G$, and the stabilizer map $X \to \sub(G)$ is not continuous on $X$. Then $\cl: \H \to \sub(G)$ is the identity on a dense set of points, but is not the identity everywhere on $\H$. In particular it is not continuous. 
\end{prop}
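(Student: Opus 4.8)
The plan is to establish the two asserted properties separately and then deduce the failure of continuity formally: if $\cl|_\H$ were continuous, then agreeing with the identity on a dense subset and taking values in the Hausdorff space $\sub(G)$ would force $\cl|_\H = \mathrm{id}_\H$, contradicting the second property. Two preliminary facts will drive the argument. First, since $X$ is profinite, Proposition \ref{prop-caract-pro-N-space} identifies it with a coset space $\widehat{G}^\N/L$, and, exactly as in the proof of Proposition \ref{prop-URS-profinitelyclosed-caract}, each point stabilizer $G_x$ is then closed for the profinite topology, so $\cl(G_x) = G_x$ for all $x \in X$. Second, the regularity hypothesis on the fixed-point sets lets me invoke Lemma \ref{lem-closure-germ-stab}, which gives $G_x \le \cl(G_x^0)$ for every $x$; since $G_x^0 \le G_x$ always, and $\cl$ is monotone, combining these yields $\cl(G_x^0) = G_x$ for every $x \in X$. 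This last identity is what will detect non-closed subgroups inside $\H$.

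For the density statement I would recall that the stabilizer map $S \colon X \to \sub(G)$ is upper semi-continuous (Proposition \ref{prop-stab-URS}), that $X$ is minimal, and that $\sub(G)$ is second-countable because $G$ is countable. Proposition \ref{prop-usc} then gives $\H = S_G(X) = \overline{\{G_x : x \in X_S\}}$, where $X_S \subseteq X$ denotes the (comeager) set of continuity points of $S$; in particular $\{G_x : x \in X_S\}$ is a dense subset of $\H$. By the first preliminary fact each such $G_x$ satisfies $\cl(G_x) = G_x$, so $\cl$ is the identity on this dense subset, as required.

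For the non-triviality statement I would work with the extension furnished by Proposition \ref{prop-usc} applied to $\varphi = S$: the minimal set $E_S' = \overline{\{(x,G_x) : x \in X_S\}} \subseteq X \times \sub(G)$, equipped with the projections $\eta \colon E_S' \to X$ (highly proximal, hence surjective) and $p \colon E_S' \to \H$. The key computation is to control the fibres: for any $(x,K) \in E_S'$, writing $K = \lim_n G_{x_n}$ with $x_n \to x$ in $X_S$, upper semi-continuity of $S$ gives $K \le G_x$, while any element fixing a neighbourhood of $x$ lies in $G_{x_n}$ for all large $n$ and hence in $K$, giving $G_x^0 \le K$. Thus $G_x^0 \le K \le G_x$ throughout $E_S'$, and monotonicity together with the identity $\cl(G_x^0) = G_x$ forces $\cl(K) = G_x$ for every $(x,K) \in E_S'$. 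Now suppose, for contradiction, that $\cl$ is the identity on $\H$. Then $K = \cl(K) = G_x$ for all $(x,K) \in E_S'$, so $S \circ \eta = p|_{E_S'}$; since $p$ is continuous and $\eta$ is a continuous surjection of compact Hausdorff spaces, hence a quotient map, this would make $S$ continuous on $X$, contradicting the hypothesis. Hence there is $(x_0,K_0) \in E_S'$ with $K_0 \subsetneq G_{x_0}$, and $H := K_0 \in \H$ witnesses $\cl(H) = G_{x_0} \neq H$.

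The step I expect to be the crux is this last one: recognizing that the statement \enquote{$\cl$ is the identity on $\H$} should be converted, via the graph-closure extension $E_S'$ and the sandwich $G_x^0 \le K \le G_x$, into continuity of the stabilizer map, and that it is precisely the quotient-map property of the highly proximal factor map $\eta$ that transfers continuity from $E_S'$ back down to $X$. Everything else is bookkeeping on the two semi-continuous maps and the monotonicity of $\cl$.
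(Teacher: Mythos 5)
Your proof is correct and follows the same skeleton as the paper's: density of profinitely closed points in $\H$ coming from the profinite structure of $X$, plus Lemma \ref{lem-closure-germ-stab} applied to an element of $\H$ squeezed strictly between some $G_x^0$ and $G_x$. The differences lie in how the two ingredients are obtained. For the density, the paper simply invokes Proposition \ref{prop-URS-profinitelyclosed-caract} ($\H$ is closed for the profinite topology, hence, $G$ being countable, has a dense set of profinitely closed points), whereas you argue directly that every stabilizer of the profinite action is $\tau_\N$-closed and that the stabilizers over continuity points are dense in $\H$; this is the same underlying observation, just unpacked. For the failure of the identity, the paper cites \cite[Lemma 2.8]{LBMB-subdyn} to produce $x \in X$ and $H \in \H$ with $G_x^0 \leq H \lneq G_x$; you re-derive exactly this instance via the graph closure $E_S'$, the sandwich $G_x^0 \leq K \leq G_x$ on its fibres, and the observation that if all fibres were singletons then the quotient-map property of the surjection $\eta$ between compact Hausdorff spaces would force the stabilizer map to be continuous. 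That quotient-map argument is a clean, self-contained substitute for the external citation, and your equality $\cl(G_x^0) = G_x$ (obtained by combining Lemma \ref{lem-closure-germ-stab} with profinite closedness of $G_x$) is marginally stronger than the one-sided inclusion $G_x \leq \cl(H)$ that the paper uses, though both suffice. One cosmetic point: countability of $G$ does not make $X \cong \widehat{G}^\N/L$ metrizable (a countable group can have uncountably many finite-index subgroups), so points of $E_S'$ should be approximated by nets rather than sequences; with that substitution your argument goes through verbatim.
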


\begin{proof}
By Proposition \ref{prop-URS-profinitelyclosed-caract} the URS $\H$ is closed for the profinite topology. Since $G$ is countable, this means that there is a dense set of $H \in \H$ such that $\cl(H) = H$. Since $x \mapsto G_x $ is not continuous on $X$, one easily verifies that one can find $x \in X$ and $H \in \H$ such that $H \lneq G_x$ and $G_x^0 \leq H$ (see \cite[Lemma 2.8]{LBMB-subdyn}). It follows from Lemma \ref{lem-closure-germ-stab} that $G_x \leq \cl(G_x^0) \leq \cl(H)$, and hence $H$ is properly contained in $\cl(H)$ since it is properly contained in $G_x$. 
\end{proof}

\begin{rmq}
An example of the above situation is provided by $G$ the Grigorchuk group and $X$ the boundary of the defining rooted tree of $G$ \cite[Sec.\ 7]{Grigorchuk-survey}. 
\end{rmq}

\subsection{Hereditarily minimal actions}

\begin{defi}
A compact $G$-space $X$ is \textbf{hereditarily minimal} if every finite index subgroup of $G$ acts minimally on $X$. 
\end{defi}

Recall that every minimal and proximal $G$-space is hereditarily minimal \cite[Lemma 3.2]{Glasner-proxflows}.

\begin{prop} \label{prop-fi-contains-env}
Let $\H$ be a URS of $G$ that is hereditarily minimal. Then for every $H,K \in \H$ we have $\mathrm{cl}(H) =  \mathrm{cl}(K) = \mathrm{cl}(\env(\H))$.

This holds in particular if $\H = S_G(X)$ with $X$ a hereditarily minimal compact  $G$-space. 
\end{prop}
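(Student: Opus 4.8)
The plan is to reduce the statement to a single coset-level claim: for every finite index normal subgroup $N$ of $G$, the subgroup $HN$ does not depend on $H \in \H$. Granting this, write $\pi \colon G \to G/N$ for the quotient map and recall from Proposition \ref{prop-coset-topology-bourbaki} that $\mathrm{cl}(H) = \bigcap_N HN$, the intersection running over all finite index normal subgroups $N$; if each $HN$ is independent of $H$, then so is $\mathrm{cl}(H)$, which gives the first equality $\mathrm{cl}(H) = \mathrm{cl}(K)$.

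To prove the coset-level claim, fix a finite index normal subgroup $N$ and consider the map $\H \to \sub(G/N)$, $H \mapsto \pi(H) = HN/N$, into the finite discrete space $\sub(G/N)$. By Lemma \ref{lem-quotient-urs-lsc} this map is lower semi-continuous, so for each subgroup $S \leq G/N$ the set $U_S = \{H \in \H : S \subseteq \pi(H)\}$ is open, being the finite intersection $\bigcap_{s \in S}\{H : s \in \pi(H)\}$ of the open sets $\{H : s \in \pi(H)\} = \{H : g \in HN\}$ for any lift $g$ of $s$. The key observation is that $U_S$ is $N$-invariant: for $n \in N$ we have $\pi(n) = 1$ in $G/N$, so $\pi(nHn^{-1}) = \pi(n)\pi(H)\pi(n)^{-1} = \pi(H)$. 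Since $\H$ is hereditarily minimal, $N$ acts minimally on $\H$, and hence the open $N$-invariant set $U_S$ is either empty or all of $\H$. Applying this with $S = \pi(H_0)$ for an arbitrary $H_0 \in \H$: as $H_0 \in U_{\pi(H_0)}$, this set is nonempty, so $U_{\pi(H_0)} = \H$, i.e. $\pi(H_0) \subseteq \pi(H)$ for every $H \in \H$. Letting $H_0$ vary, all the subgroups $\pi(H)$ coincide, which is exactly the claim that $HN$ is independent of $H$.

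For the second equality, note that the common value of $\pi(H)$ is a subgroup $\bar{L}_N \leq G/N$, and since $\env(\H)$ is generated by the members of $\H$, its image is $\pi(\env(\H)) = \langle \pi(H) : H \in \H\rangle = \bar{L}_N$. Hence $\env(\H) N = HN$ for every finite index normal $N$, and intersecting over all such $N$ gives $\mathrm{cl}(\env(\H)) = \bigcap_N \env(\H)N = \bigcap_N HN = \mathrm{cl}(H)$. The step I expect to be the crux is precisely the upgrade from lower semi-continuity to constancy: on $\sub(G)$ the map $H \mapsto HN$ is only lower semi-continuous in general (upper semi-continuity genuinely fails, cf. Remark \ref{rmq-p-adic-rationals}), so one cannot simply invoke continuity of an invariant map on a minimal system. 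The argument must instead feed the minimality of the finite index subgroup $N$ directly into the open invariant sets $U_S$, and this is exactly where hereditary minimality is used.

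Finally, for the ``in particular'' clause it suffices to check that $S_G(X)$ is hereditarily minimal whenever $X$ is. Applying Proposition \ref{prop-usc} to the stabilizer map (Proposition \ref{prop-stab-URS}) produces the highly proximal, hence irreducible, extension $\eta \colon E'_\varphi \to X$ with $p(E'_\varphi) = S_G(X)$. Given a finite index subgroup $G_0 \leq G$, the space $X$ is $G_0$-minimal by hypothesis; irreducibility of $\eta$ is purely topological, so $\eta$ remains irreducible as a map of $G_0$-spaces, and an irreducible extension is minimal if and only if its base is, whence $E'_\varphi$ is $G_0$-minimal. As $p$ is continuous and $G_0$-equivariant, its image $S_G(X)$ is $G_0$-minimal as well. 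Thus $S_G(X)$ is hereditarily minimal, and the main statement applies.
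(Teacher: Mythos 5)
Your proof is correct, but it runs on a different engine than the paper's. The paper's argument is dual to yours: for any finite index subgroup $L$ of $G$ containing $H$, the set $\sub(L)\cap\H$ is a \emph{closed} $L$-invariant subset of $\H$ containing $H$ (Lemma \ref{lem-containing-closed}), so hereditary minimality forces $K\leq L$ for every $K\in\H$; hence $H$ and $K$ have the same finite-index overgroups, giving $\cl(H)=\cl(K)$ at once, and this common subgroup is normal, contains $\env(\H)$, is profinitely closed, and is contained in $\cl(\env(\H))$, whence the second equality. You instead work one finite quotient at a time and exploit \emph{open} invariant sets: lower semi-continuity of $H\mapsto HN$ (Lemma \ref{lem-quotient-urs-lsc}) makes your sets $U_S$ open, their $N$-invariance plus minimality of the $N$-action on $\H$ makes each non-empty one equal to all of $\H$, and constancy of the image of $H$ in $G/N$ follows. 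Both routes feed hereditary minimality into an invariant-set argument, but via different lemmas and a different decomposition: the paper's is shorter and handles all finite-index overgroups directly without passing to quotients, while yours isolates the clean intermediate statement that the image of $H$ in every finite quotient $G/N$ is literally constant on $\H$ (the paper's proof also yields this, but only after one extra line, taking $L=HN$). For the ``in particular'' clause your argument is exactly the paper's intended one --- the paper compresses it into a single appeal to Propositions \ref{prop-usc} and \ref{prop-stab-URS}, and you spell out the transfer of minimality through the irreducible extension correctly.

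One peripheral inaccuracy, which does not affect the proof: you cite Remark \ref{rmq-p-adic-rationals} as showing that $H\mapsto HN$ fails to be upper semi-continuous, but that remark concerns the full closure $\cl(H)=\bigcap_N HN$, and in $G=\Z[1/p]$ there are no proper finite-index subgroups at all, so it is not an example for a fixed finite-index $N$. (Such examples do exist --- e.g.\ $G=\Z$, $N=2\Z$, $H_k=(2k+1)\Z\to\{0\}$ --- so your motivating point that only lower semi-continuity is available stands.)
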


\begin{proof}
	Let $L$ be  a finite index subgroup of $G$ such that $H \leq L$. Since $L$ acts minimally on $\H$, Lemma  \ref{lem-containing-closed} says that $K \leq L$. Consequently $\mathrm{cl}(H) =  \mathrm{cl}(K)$. Since $\H$ is $G$-invariant, it follows that this common subgroup is normal in $G$. Call it $N$.  We shall see that $N = \mathrm{cl}(\env(\H))$. Since $H \leq  \env(\H)$ for every $H \in \H$, the inclusion $N \leq  \mathrm{cl}(\env(\H))$ is clear. On the other hand $N$ contains $\env(\H)$ since $N$ contains all elements of $\H$. Since $N$ is closed in the profinite topology,  $N$ contains $\mathrm{cl}(\env(\H))$. Hence equality holds.
	
	As for the last claim, it follows from the fact that Propositions \ref{prop-usc} and \ref{prop-stab-URS} ensure that the stabilizer URS associated to a hereditarily minimal compact  $G$-space is itself hereditarily minimal.	
\end{proof}

We refer to \S \ref{subsec-identity} for the definition of a law.

\begin{thm} \label{thm-env-keeps-law}
	Suppose $G$ is a residually finite group. Let $\H$ be a URS of $G$ that is hereditarily minimal, and suppose that $H \in \H$ satisfies the law $w$. Then $\env(\H)$ also satisfies the law $w$.
\end{thm}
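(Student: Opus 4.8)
The plan is to combine two facts that have already been established upstream: that every subgroup in a hereditarily minimal URS has the same profinite closure, and that satisfying a law is a condition inherited under closure in a Hausdorff topological group. The proof should then be almost entirely formal. First I would record that since $G$ is residually finite, the intersection of all finite index normal subgroups is trivial, so by Proposition \ref{prop-coset-topology-bourbaki} the profinite topology $\tau_\N$ on $G$ is Hausdorff. This is precisely what is needed to invoke the closure machinery for laws.

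Next, since $H$ satisfies the law $w$ and $G$ equipped with $\tau_\N$ is a Hausdorff topological group, Lemma \ref{lem-closure-keeps-law} applies and gives that the profinite closure $\cl(H)$ again satisfies the law $w$. This is the one place where Hausdorffness (hence residual finiteness) is genuinely used: it guarantees that $\Sigma_w(G)$ is closed, so that the equational condition defining $w$ survives passage to the closure.

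Then I would feed in the hereditary minimality hypothesis through Proposition \ref{prop-fi-contains-env}, which yields the identification $\cl(H) = \cl(\env(\H))$. Consequently $\env(\H)$ is a subgroup of $\cl(H)$, and since $\cl(H)$ satisfies $w$ and the class of groups satisfying a fixed law is closed under taking subgroups, it follows that $\env(\H)$ satisfies $w$. This closes the argument.

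I do not expect a real obstacle here, because the substantive work has been done in Proposition \ref{prop-fi-contains-env}. The delicate step, already handled there, is that hereditary minimality forces every $K \in \H$ to lie in any finite index subgroup containing $H$ (via Lemma \ref{lem-containing-closed}), which is exactly what makes the common closure coincide with $\cl(\env(\H))$. Once that identification is available, the present statement reduces to two elementary permanence properties — laws pass to closures in Hausdorff groups (Lemma \ref{lem-closure-keeps-law}) and laws pass to subgroups — so the remaining verification is purely routine.
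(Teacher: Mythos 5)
Your proposal is correct and follows essentially the same route as the paper's own proof: Hausdorffness of the profinite topology from residual finiteness, Lemma \ref{lem-closure-keeps-law} to pass the law $w$ to $\cl(H)$, and Proposition \ref{prop-fi-contains-env} to see that $\env(\H) \leq \cl(H)$, whence $\env(\H)$ satisfies $w$ as a subgroup of a group satisfying $w$. No gaps; nothing further is needed.
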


\begin{proof}
$G$ is residually finite, so the profinite topology on $G$ is Hausdorff. Hence Lemma  \ref{lem-closure-keeps-law} says that $\mathrm{cl}(H)$ still satisfies $w$. Since $\mathrm{cl}(H)$ contains $\env(\H)$ by Proposition \ref{prop-fi-contains-env}, $\env(\H)$ also satisfies $w$.
\end{proof}

Without the hereditarily minimal assumption, it does not hold in general that a URS satisfying a law $w$ lives inside a normal subgroup of $G$ satisfying $w$, as the following example shows:

\begin{ex} \label{ex-prof-action-abelian-urs}
Let $(F_n)$ be a sequence of non-abelian finite groups. Suppose that for every $n$ there is an abelian subgroup $E_n$ of $F_n$ such that the only normal subgroup $N$ of $F_n$ containing $E_n$ is $N =F_n$. Let $\mathbb{G} = \prod_n F_n$, and let $G$ be a countable dense subgroup of $\mathbb{G}$ containing $\bigoplus_n F_n$. Consider the $G$-action on $X = \prod_n F_n / E_n$. This action is minimal and $G_x$ is abelian for every $x \in X$. In particular every $H \in \H := S_G(X)$ is abelian. On the other hand $\env(\H)$ contains the normal closure in $G$ of $\bigoplus_n E_n$. In particular $\env(\H)$  contains $\bigoplus_n F_n$, and hence $\env(\H)$ is not abelian. 

Examples as above can be found among finitely generated groups. For instance the groups constructed by B.H.\ Neumann in \cite[Ch.\ III]{Neum-37} satisfy these properties. Given a sequence $\mathbf{u} =  (u_n)_n$ of odd integers greater than $5$, B.H.\ Neumann constructs a subgroup $G_\mathbf{u}$ of the product $\prod_n \mathrm{Alt}(u_n)$. The subgroup $G_\mathbf{u}$ is generated by two elements, and $G_\mathbf{u}$ contains $\bigoplus_n \mathrm{Alt}(u_n)$ \cite[Ch.\ III (16)]{Neum-37}. Since $\mathrm{Alt}(u_n)$ is simple, $G_\mathbf{u}$ therefore falls into the above setting with $F_n =  \mathrm{Alt}(u_n)$ and $E_n$ any non-trivial abelian subgroup of $ \mathrm{Alt}(u_n)$. 
\end{ex}

\begin{rmq}
We note that in the above examples $G_\mathbf{u}$, we actually have an abelian URS $\H$ for which $\env(\H)$ satisfies no law at all (since a given non-trivial law cannot be satisfied by all finite groups, and hence neither by arbitrary large finite alternating groups). 
\end{rmq}

\subsection{PIF groups} \label{subsec-HJI}

 In this subsection we focus on the class of groups for which, for a faithful minimal compact $G$-space, profinite implies topologically free. 

\begin{defi}
We say that a group $G$ is \textbf{PIF} if for every faithful minimal compact $G$-space $X$, if the $G$-action on $X$ is profinite then it is topologically free. 
\end{defi}

This notion was studied notably by Grigorchuk. We will use the following proposition from \cite{Grigorchuk-survey}. Recall that a group $G$ is \textbf{just-infinite} (JI) if $G$ is infinite and $G/N$ is finite for every non-trivial normal subgroup $N$. Also $G$ is \textbf{hereditarily just-infinite} (HJI)  if every finite index subgroup of $G$ is  JI.

\begin{prop} \label{prop-HJI-tree-top-free}
Each one of the following conditions implies that $G$ is PIF: \begin{enumerate}
	\item \label{item-PIF} for every non-trivial subgroups $H_1,H_2 \leq G$ such that the normalizer $ N_G(H_i)$ of $H_i$ has finite index in $G$ for $i=1,2$, we have that $H_1 \cap H_2$ is non-trivial. 
	\item $G$ is hereditarily just-infinite. 
\end{enumerate}
\end{prop}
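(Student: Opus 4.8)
The plan is to deduce both implications from condition (\ref{item-PIF}): I would first check that a hereditarily just-infinite group satisfies (\ref{item-PIF}), and then prove that (\ref{item-PIF}) by itself forces $G$ to be PIF. For the reduction, suppose $G$ is HJI and let $H$ be a non-trivial subgroup with $L := N_G(H)$ of finite index in $G$. Then $L$ is just-infinite, being a finite index subgroup of an HJI group, and $H$ is a non-trivial normal subgroup of $L$ (it is contained in its own normaliser $L$ and normalised by it). Hence $[L:H] < \infty$, so $H$ has finite index in $G$. Consequently, given two non-trivial subgroups $H_1, H_2$ with finite index normalisers, both are themselves of finite index, so $H_1 \cap H_2$ is of finite index and in particular non-trivial since $G$ is infinite. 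Thus (\ref{item-PIF}) holds, and it remains to treat the core implication.

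For the core implication, let $X$ be a faithful minimal compact $G$-space on which the action is profinite, and suppose towards a contradiction that the action is not topologically free. Then there are a point $x \in X$ and a non-trivial $g \in G_x^0$, so $g$ acts trivially on some neighbourhood of $x$. Since a profinite $G$-space is totally disconnected, I would take this neighbourhood to be a clopen set $U$, and profiniteness (Proposition \ref{prop-caract-pro-N-space}) ensures that the $G$-orbit of $U$ is finite, say $\{U_1, \ldots, U_k\}$ with $U = U_1$. The union $\bigcup_i U_i$ is then a non-empty $G$-invariant clopen subset of $X$, so by minimality it equals $X$.

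Next I would introduce, for each $i$, the pointwise stabiliser $P_i = \{h \in G : h|_{U_i} = \mathrm{id}\}$. Writing $U_i = g_i U_1$ one checks that $P_i = g_i P_1 g_i^{-1}$, so each $P_i$ is a non-trivial conjugate of $P_1$, containing the corresponding conjugate of $g$; moreover $P_i$ is normalised by the setwise stabiliser of $U_i$, whose index in $G$ is $k$, so $N_G(P_i)$ is of finite index. Applying (\ref{item-PIF}) and using that $N_G(P_i \cap P_j) \supseteq N_G(P_i) \cap N_G(P_j)$ is again of finite index, a repeated application (induction on the number of factors) yields $\bigcap_{i=1}^k P_i \neq \{1\}$. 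But $\bigcap_i P_i$ is precisely the kernel of the action on $\bigcup_i U_i = X$, which is trivial by faithfulness, giving the desired contradiction.

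The hard part will be the passage from pairwise to $k$-fold non-trivial intersection, which is exactly where the precise shape of (\ref{item-PIF}) is used: the argument goes through only because the collection of non-trivial subgroups with finite index normaliser is stable under intersection, allowing the two-subgroup hypothesis to be iterated. The remaining points are routine verifications — that profiniteness gives finite orbits of clopen sets (already recorded in the proof of Proposition \ref{prop-caract-pro-N-space}), that total disconnectedness lets the germ-triviality of $g$ be witnessed on a clopen neighbourhood, and that each $P_i$ is indeed normalised by the setwise stabiliser of $U_i$.
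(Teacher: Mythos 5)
Your proof is correct and follows the same structure as the paper's: the paper likewise handles the HJI case by observing that every HJI group satisfies condition (\ref{item-PIF}) (your finite-index argument is exactly that observation), and for the core implication it simply cites \cite[Proposition 4.11]{Grigorchuk-survey}, remarking that the argument there is the same. Your spelled-out version --- clopen neighbourhood with finite $G$-orbit via Proposition \ref{prop-caract-pro-N-space}, pointwise stabilizers $P_i$ with finite-index normalizers, iterated application of (\ref{item-PIF}) using $N_G(P_i \cap P_j) \supseteq N_G(P_i) \cap N_G(P_j)$, and triviality of $\bigcap_i P_i$ by faithfulness --- is precisely the argument behind that citation, so the only difference is that you make it self-contained.
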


\begin{proof}
The first assertion is \cite[Proposition 4.11]{Grigorchuk-survey}  (the formulation there is not quite the same, but the argument is the same). The second assertion follows from the first one because every HJI group satisfies (\ref{item-PIF}).
\end{proof}

The first condition of the proposition is satisfied for instance by non-abelian free groups, and also by all Gromov-hyperbolic groups with no non-trivial finite normal subgroup, and more generally by any group $G$ admitting an  isometric action on a hyperbolic space $X$ with unbounded orbits such that the $G$-action on its limit set $\partial_XG$ is faithful. 

\begin{prop} \label{prop-PIF-unfaithful}
	Suppose $G$ is PIF, and let $\H$ be a non-trivial URS of $G$. Then there exists a non-trivial normal subgroup $N$ of $G$ such that $N \leq \cl(H) $ for every $H \in \H$. 
\end{prop}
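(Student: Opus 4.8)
The plan is to reduce the statement to the \textbf{PIF} hypothesis by realizing the profinite closure $\cl(\H)$ as a stabilizer URS of an explicit profinite action. Fix $H_0 \in \H$, let $\psi \colon G \to \widehat{G}$ be the map to the profinite completion, set $L = \overline{\psi(H_0)} \leq \widehat{G}$, and consider $X = \widehat{G}/L$ with its left translation $G$-action. Then $X$ is a minimal compact $G$-space (since $\psi(G)$ is dense in $\widehat{G}$), it is profinite by Proposition \ref{prop-caract-pro-N-space}, and Proposition \ref{prop-stab-urs-completion} gives $S_G(X) = \cl(\H)$. The key feature I will exploit is that the stabilizer of the base coset $L \in X$ is exactly $\psi^{-1}(L) = \cl(H_0)$.

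Next I introduce the candidate subgroup $N = \ker(G \acts X) = \bigcap_{x \in X} G_x$, which is normal in $G$, and argue that it is non-trivial through a dichotomy governed by \textbf{PIF}. If $N$ were trivial, the action $G \acts X$ would be faithful; being also minimal and profinite, the \textbf{PIF} hypothesis would force it to be topologically free, equivalently (as recorded in the preliminaries) $S_G(X) = \cl(\H)$ would be the trivial URS. But $\cl(\H)$ is the unique URS inside $\overline{\{\cl(H) : H \in \H\}}$ by Corollary \ref{cor-closure-URS}, so its triviality would place $\left\lbrace 1 \right\rbrace$ in this closure; choosing a net $(H_i)$ in $\H$ with $\cl(H_i) \to \left\lbrace 1 \right\rbrace$ and using $H_i \subseteq \cl(H_i)$ we would get $H_i \to \left\lbrace 1 \right\rbrace$, hence $\left\lbrace 1 \right\rbrace \in \H$ and, by minimality, $\H$ trivial, contradicting our hypothesis. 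Therefore $N \neq \left\lbrace 1 \right\rbrace$.

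It then remains to upgrade $N \leq \cl(H_0)$ to $N \leq \cl(H)$ for \emph{every} $H \in \H$. Since $N$ is the kernel, $N \leq G_x$ for all $x \in X$, and in particular $N \leq G_L = \cl(H_0)$, so the set $\Omega = \left\lbrace H \in \H : N \leq \cl(H) \right\rbrace$ is non-empty. It is $G$-invariant because $N$ is normal and $\cl$ is $G$-equivariant: $\cl(gHg^{-1}) = g\,\cl(H)\,g^{-1} \supseteq gNg^{-1} = N$. It is also closed, since by Proposition \ref{prop-Nclosure-usc} the restriction $\cl_{|\H}$ is upper semi-continuous, so for each $k \in N$ the set $\left\lbrace H \in \H : k \in \cl(H) \right\rbrace$ is closed and $\Omega = \bigcap_{k \in N} \left\lbrace H : k \in \cl(H) \right\rbrace$. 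A non-empty, closed, $G$-invariant subset of the minimal system $\H$ must equal $\H$, giving $N \leq \cl(H)$ for all $H \in \H$, as desired.

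The main obstacle is this last step rather than the dichotomy. The \textbf{PIF} hypothesis only constrains faithful actions, so in the surviving (non-faithful) case it hands us a non-trivial normal subgroup $N$ only as the global kernel of $X$, a priori controlling the closure of the single base point $H_0$. The substantive work is to promote "$N$ lies in one closure" to "$N$ lies in every closure occurring in $\H$", and the correct tool is the interplay of upper semi-continuity of $H \mapsto \cl(H)$ on the URS, normality of $N$, and minimality of $\H$, which together close the gap and, as a bonus, require no countability assumption on $G$.
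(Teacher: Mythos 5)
Your proof is correct and is essentially the paper's own argument: both realize $\cl(\H)$ as the stabilizer URS of the profinite $G$-space $\widehat{G}/\overline{\psi(H_0)}$ via Proposition \ref{prop-stab-urs-completion}, invoke PIF to extract a non-trivial normal subgroup $N$ inside $\cl(H_0)$, and then spread $N$ to all of $\H$ using upper semi-continuity of $\cl$ together with normality and minimality. The only differences are presentational: you run the PIF dichotomy as a contradiction (the paper argues contrapositively from the non-triviality of $\cl(\H)$), and you spell out two steps the paper leaves terse, namely why $\cl(\H)$ non-trivial follows from $\H$ non-trivial and why the closed $G$-invariant set $\left\lbrace H \in \H : N \leq \cl(H) \right\rbrace$ must equal $\H$.
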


\begin{proof}
	Note that since $\H$ is not the trivial URS, $\cl(\H)$ is not the trivial URS either. Let $H \in \H$, and $L = \overline{\psi(H)}$, where  $\psi: G \to \widehat{G}$ is the canonical map from $G$ to its profinite completion. By Proposition \ref{prop-stab-urs-completion}, the stabilizer URS associated to the left translation action of $G$ on $\widehat{G}/L$ is equal to  $\cl(\H)$, and hence is not trivial. This means that the action of $G$ on $\widehat{G}/L$ is not topologically free. Since this action is profinite and $G$ is PIF, the action cannot be faithful. So there  is a non-trivial normal subgroup $N$ of $G$ that is contained in the stabilizer in $G$ of the coset $L$, which is $\psi^{-1}(L) = \cl(H)$. Upper semi-continuity of $\cl$ on $\H$ then implies that $N \leq \cl(H')$ for every $H' \in \H$.
\end{proof}

\begin{defi}
We say that a subgroup $H$ of a group $G$ is \textbf{co-finitely dense for the profinite topology} if $\cl(H)$ is a finite index subgroup of $G$. 
\end{defi}

\begin{cor} \label{cor-HJI-quasidense}
Suppose $G$ is hereditarily just-infinite, and let $\H$ be a non-trivial URS of $G$. Then for every $H \in \H$, $H$ is co-finitely dense in $G$ for the profinite topology.
\end{cor}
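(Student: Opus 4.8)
The plan is to deduce this from Proposition \ref{prop-PIF-unfaithful} together with the hereditarily just-infinite hypothesis. Since $G$ is hereditarily just-infinite it is PIF by Proposition \ref{prop-HJI-tree-top-free}, so Proposition \ref{prop-PIF-unfaithful} applies to the non-trivial URS $\H$: there is a non-trivial normal subgroup $N$ of $G$ such that $N \leq \cl(H)$ for every $H \in \H$. The goal is then to upgrade ``$\cl(H)$ contains a non-trivial normal subgroup'' to ``$\cl(H)$ has finite index''. This is where the just-infiniteness enters in an essential way.

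First I would record that $\cl(H)$ is a closed subgroup of $G$ for the profinite topology, and that $N$ is a non-trivial normal subgroup of $G$. Since $G$ is just-infinite, every non-trivial normal subgroup has finite index, so $N$ has finite index in $G$. Because $N \leq \cl(H)$ and $N$ has finite index, $\cl(H)$ also has finite index in $G$. By the definition of co-finitely dense, this says precisely that $H$ is co-finitely dense in $G$ for the profinite topology. This holds for every $H \in \H$, which is exactly the statement.

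The argument is short because the two substantial inputs — that hereditarily just-infinite implies PIF (Proposition \ref{prop-HJI-tree-top-free}) and that a non-trivial URS of a PIF group has profinite closures containing a common non-trivial normal subgroup (Proposition \ref{prop-PIF-unfaithful}) — are already available. The only genuinely new observation needed is that a hereditarily just-infinite group is in particular just-infinite, so any non-trivial normal subgroup, and a fortiori any subgroup containing it such as $\cl(H)$, has finite index. I expect no real obstacle here; the main subtlety is simply to make sure one invokes just-infiniteness (finite index of non-trivial normal subgroups) rather than the full hereditary hypothesis, the latter having already been consumed in establishing that $G$ is PIF.
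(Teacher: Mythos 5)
Your proposal is correct and follows exactly the paper's own proof: invoke Proposition \ref{prop-HJI-tree-top-free} to get that $G$ is PIF, apply Proposition \ref{prop-PIF-unfaithful} to obtain a non-trivial normal subgroup $N \leq \cl(H)$ for all $H \in \H$, and then use just-infiniteness to conclude that $N$, hence $\cl(H)$, has finite index. Your remark that only plain just-infiniteness (not the hereditary hypothesis) is needed at the last step is a fair observation, but the route is the same.
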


\begin{proof}
Proposition \ref{prop-HJI-tree-top-free} says that $G$ is PIF. So Proposition \ref{prop-PIF-unfaithful} applies, and gives a non-trivial normal subgroup $N$ such that $N \leq \cl(H) $ for every $H \in \H$. By the assumption $N$ must have finite index, and the conclusion follows. 
\end{proof}

Recall that every HJI-group is either virtually simple or residually finite. Corollary \ref{cor-HJI-quasidense} is void for virtually simple groups, so the focus here is on residually finite HJI-groups. By Margulis normal subgroup theorem, every irreducible lattice $\Gamma$ in a connected semisimple Lie group $\mathbf{G}$ (with trivial center and no compact factor) of rank $ \geq 2$ is HJI. Under the assumption that every simple factor of the ambient Lie group $\mathbf{G}$ has rank $ \geq 2$, it is known that every non-trivial URS of $\Gamma$ is just the conjugacy class of a finite index subgroup \cite[Cor.\ F]{Bou-Houd}. The normal subgroup theorem of Bader--Shalom asserts that any irreducible cocompact lattice $\Gamma$ in a product $\mathbf{G_1} \times \mathbf{G_2}$, where $\mathbf{G_1}, \mathbf{G_2}$ are compactly generated topologically simple locally compact groups, is HJI \cite{Bader-Shalom}. In this setting the URSs of $\Gamma$ are not understood.

Following \cite{Cornulier-doublecoset}, we shall say that a group $G$ has \textbf{property (PF) }if for every subgroup $H$ of $G$, $H$ is co-finitely dense in $G$ for the profinite topology only if $H$ has finite index. Let $p$ be a prime number. Following \cite{EJ-PWD}, we say that a group $G$ is \textbf{weakly $p$-LERF} if for every subgroup $H$ of $G$, the closure $\cl_p(H)$ of $H$ for the pro-$p$ topology has finite index in $G$ only if $H$ has finite index. Note that applying the definition of weakly $p$-LERF to the trivial subgroup, we see that a just-infinite group that is weakly $p$-LERF is necessarily residually-$p$. Every group that is weakly $p$-LERF has property (PF).
Among their striking properties, the finitely generated groups obtained as the outputs of the process carried out in \cite{EJ-PWD} are HJI and weakly $p$-LERF.

\begin{cor} \label{cor-HJI-noURS}
Suppose $G$ is hereditarily just-infinite and has property (PF). Then $G$ has no continuous URS.
\end{cor}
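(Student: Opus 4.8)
The plan is to prove the contrapositive of "continuous" directly: I will show that \emph{every} URS of $G$ is finite, which gives the statement at once since a continuous URS is by definition an infinite one. Let $\H$ be a URS of $G$. If $\H$ is the trivial URS $\left\lbrace \left\lbrace 1 \right\rbrace \right\rbrace$ it is a singleton, hence finite, so I may assume $\H$ is non-trivial and aim to show it is a finite $G$-orbit.

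The first step is to combine the two standing hypotheses to pin down the subgroups occurring in $\H$. Since $G$ is hereditarily just-infinite and $\H$ is non-trivial, Corollary \ref{cor-HJI-quasidense} applies and tells us that every $H \in \H$ is co-finitely dense in $G$ for the profinite topology, i.e.\ $\cl(H)$ has finite index. Property (PF) is exactly the statement that this can only happen when $H$ itself has finite index in $G$. Thus every $H \in \H$ is a finite index subgroup of $G$.

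The second step is to convert finite index into finiteness of the orbit. Fix $H \in \H$. As $H \leq N_G(H)$, the normalizer $N_G(H)$ has finite index in $G$, and so $H$ admits only finitely many conjugates; in other words the $G$-orbit $H^G \subseteq \sub(G)$ is finite. Being finite in the Hausdorff space $\sub(G)$, the set $H^G$ is closed, and it is $G$-invariant by construction. Since $H^G$ is a non-empty closed $G$-invariant subset of the minimal set $\H$, minimality forces $\H = H^G$, which is a finite $G$-orbit. Hence $\H$ is finite, and $G$ admits no continuous URS.

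I do not anticipate a substantive obstacle: the argument is a clean two-step reduction, with Corollary \ref{cor-HJI-quasidense} supplying co-finite density from the hereditarily just-infinite hypothesis and property (PF) upgrading this to finite index. The only point deserving a moment's care is the passage from "every member of $\H$ has finite index" to "$\H$ is finite", which rests on the elementary fact that a finite index subgroup has finitely many conjugates together with the minimality of $\H$; once the finite orbit $H^G$ is seen to be closed and invariant, minimality closes the argument immediately.
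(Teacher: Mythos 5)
Your proof is correct and follows the same route as the paper's: apply Corollary \ref{cor-HJI-quasidense} to get that $\cl(H)$ has finite index for every $H \in \H$, use property (PF) to conclude $H$ itself has finite index, and deduce that $\H$ is a finite orbit. You merely spell out the final step (finite index $\Rightarrow$ finitely many conjugates $\Rightarrow$ $\H = H^G$ by minimality) in more detail than the paper does.
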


\begin{proof}
Let $\H$ be a URS of $G$. If $\H$ is trivial, then there is nothing to show. Otherwise, for every  $H \in \H$, $\cl(H)$ has finite index in $G$ by Corollary \ref{cor-HJI-quasidense}. Hence so does $H$ since $G$ has (PF). In particular $H$ has only finitely many conjugates, i.e.\ $\H$ is finite.
\end{proof}

\section{The Furstenberg URS} \label{sec-A_G}

\begin{defi}
 Given two closed subsets $\mathcal{X}_1,\mathcal{X}_2 \subset \sub(G)$, we write $\mathcal{X}_1 \preccurlyeq \mathcal{X}_2$ if there exist $H_1 \in \mathcal{X}_1$ and $H_2 \in \mathcal{X}_2$ such that $H_1 \leq H_2$. 
\end{defi}

One verifies that, when restricted to the set $\mathrm{URS}(G)$, the relation $\preccurlyeq$ is a partial order \cite[Cor.\ 2.15]{LBMB-subdyn}.

Recall that a compact $G$-space $X$ is strongly proximal if the orbit closure of every probability measure on $X$ in the space $\mathrm{Prob}(X)$ contains a Dirac measure. The Furstenberg boundary $\partial_F G$ of $G$ is the universal minimal and strongly proximal $G$-space \cite{Furst-1973}, \cite{Glasner-proxflows}. We denote by $\mathrm{Rad}(G)$ the amenable radical of the group $G$. It coincides with the kernel of the action of $G$ on $\partial_F G$ (a result that holds more generally for arbitrary locally compact groups \cite{Furman-min-sp}). 

\begin{defi}
The  stabilizer URS associated to the $G$-action on $\partial_F G$  is denoted $\mathcal{A}_G$, and is called the Furstenberg URS of $G$.
\end{defi}

A result of Frol{í}k implies that the map $x \mapsto G_x$ is continuous on $\partial_F G$, so that $\mathcal{A}_G$ is exactly the collection of point stabilizers for the action of $G$ on $\partial_F G$ (see \cite{Kennedy} and references there).

\begin{prop} \label{prop-AG}
	The following hold:
	\begin{enumerate}
		\item $\mathcal{A}_G$ is amenable, and $\mathcal{X} \preccurlyeq \mathcal{A}_G$ for every non-empty closed $G$-invariant subset $\mathcal{X}$ of $\sub(G)$ consisting of amenable subgroups.
		\item $\mathcal{A}_G$ is invariant under the action of $\mathrm{Aut}(G)$ on $\sub(G)$.
		\item $\mathrm{Rad}(G) \leq H$ for every $H \in \A_G$.
				\item If $N$ is an amenable normal subgroup of $G$, and if $\sub_{\geq N}(G)$ is the set of subgroups of $G$ containing $N$, then the natural map $\varphi: \sub(G/N) \rightarrow \sub_{\geq N}(G)$ induces a $G$-equivariant homeomorphism between $\mathcal{A}_{G/N}$ and $\mathcal{A}_{G}$.
		\item $\mathcal{A}_G$ is a singleton if and only if $\A_G = \left\lbrace  \mathrm{Rad}(G) \right\rbrace $. When this does not hold, $\mathcal{A}_G$ is continuous.
		\item  $\A_G = \left\lbrace  \mathrm{Rad}(G) \right\rbrace $ if and only if every amenable URS of $G$ lives inside the amenable radical: $H \leq  \mathrm{Rad}(G) $ for every amenable URS $\H$ and every $H \in \H$.
	\end{enumerate}
\end{prop}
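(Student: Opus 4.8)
The plan is to treat the six items in a dependency order, deriving most of them from two inputs: the classical fact (which I would cite, as it is not established above) that point stabilizers of a minimal strongly proximal action are amenable, and the identification of $\A_G$ with the full, continuously varying family of stabilizers $\{G_x : x \in \partial_F G\}$ coming from Frolík's continuity theorem recalled above. Throughout I would use one consequence of strong proximality: the \emph{unique} minimal closed $G$-invariant subset of $\prob(\partial_F G)$ is the set of Dirac masses $\{\delta_x : x \in \partial_F G\}$, which is $G$-equivariantly homeomorphic to $\partial_F G$. Indeed, any minimal subset contains some orbit closure $\overline{G\mu}$, which by strong proximality meets the Diracs, and $\overline{G\delta_x} = \{\delta_y : y \in \partial_F G\}$ by minimality of $\partial_F G$.

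For item (1), amenability of every $H \in \A_G$ is the cited fact; the substantive point is maximality. Given a nonempty closed $G$-invariant $\X \subseteq \sub(G)$ consisting of amenable subgroups, I would form $\Omega = \{(H,\mu) \in \X \times \prob(\partial_F G) : g\mu = \mu \ \text{for all } g \in H\}$. This is invariant under the diagonal action $g\cdot(H,\mu) = (gHg^{-1}, g_\ast\mu)$, and it is closed: if $(H_n,\mu_n) \to (H,\mu)$ then for each $g \in H$ one has $g \in H_n$ eventually (by the very definition of the topology on $\sub(G)$), so $g\mu_n = \mu_n$ passes to the weak-$\ast$ limit. It is nonempty since every amenable $H \in \X$ fixes a probability measure on $\partial_F G$. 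Choosing a minimal subset $\mathcal{M} \subseteq \Omega$ (Zorn's lemma) and projecting to the second coordinate yields, by continuity, a closed $G$-invariant set $p_2(\mathcal{M})$ that is a factor of the minimal system $\mathcal{M}$, hence minimal, hence equal to $\{\delta_x : x \in \partial_F G\}$ by the paragraph above. Any $(H,\delta_x) \in \mathcal{M}$ then satisfies $H \leq G_x$ with $H \in \X$ and $G_x \in \A_G$, giving $\X \preccurlyeq \A_G$.

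Items (2)--(4) are comparatively short. For (3), $\mathrm{Rad}(G)$ is the kernel of $G \acts \partial_F G$, i.e. $\bigcap_x G_x$, so it lies in every $G_x = H \in \A_G$. For (2), twisting the action on $\partial_F G$ by $\alpha \in \Aut(G)$ produces another minimal strongly proximal $G$-space whose point stabilizers are $\alpha^{-1}(G_x)$; the universal property of the Furstenberg boundary forces an isomorphism with $\partial_F G$, whence $\alpha^{-1}(\A_G) = \A_G$ for every $\alpha$. For (4), since $N$ is amenable and normal we have $N \leq \mathrm{Rad}(G)$, so $N$ acts trivially on $\partial_F G$ and the action factors through $G/N$; any minimal strongly proximal $G/N$-space pulls back to such a $G$-space and is therefore a factor of $\partial_F G$, so $\partial_F G$ has the universal property for $G/N$ and $\partial_F(G/N) \cong \partial_F G$ compatibly with $G \to G/N$. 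Under the standard $G$-equivariant homeomorphism $\varphi$, the $G/N$-stabilizer $G_x/N$ maps to $G_x$; since $\A_G \subseteq \sub_{\geq N}(G)$ by (3), $\varphi$ restricts to the asserted homeomorphism $\A_{G/N} \to \A_G$.

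Finally, items (5)--(6) combine the above. The stabilizer map $\partial_F G \to \A_G$ is a continuous $G$-equivariant surjection (Frolík), exhibiting $\A_G$ as a factor of $\partial_F G$; hence $\A_G$ is itself minimal and strongly proximal, in particular proximal, and a minimal proximal action on a finite discrete space is a single point, so a non-singleton $\A_G$ must be continuous. A singleton $\A_G = \{H\}$ has $H$ normal (being $G$-invariant) and amenable, hence $H \leq \mathrm{Rad}(G)$, while $\mathrm{Rad}(G) \leq H$ by (3), giving $\A_G = \{\mathrm{Rad}(G)\}$; the converse is trivial. For (6), if $\A_G = \{\mathrm{Rad}(G)\}$ then any amenable URS $\H$ satisfies $\H \preccurlyeq \A_G$ by (1), so some $H_1 \in \H$ lies in $\mathrm{Rad}(G)$; since $\sub(\mathrm{Rad}(G))$ is closed and $G$-invariant and $\H = \overline{H_1^{\,G}}$ by minimality, every element of $\H$ lies in $\mathrm{Rad}(G)$. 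Conversely $\A_G$ is itself an amenable URS, so the hypothesis gives $H \leq \mathrm{Rad}(G)$ for $H \in \A_G$, which with (3) forces $\A_G = \{\mathrm{Rad}(G)\}$. I expect the main obstacle to be the maximality step in (1): passing from an abstract amenable subgroup to a genuine fixed point on $\partial_F G$ is exactly where strong proximality, through the uniqueness of the Dirac system, is indispensable, while the identification $\partial_F(G/N) \cong \partial_F G$ in (4) and the cited amenability of boundary stabilizers are the remaining inputs to keep track of.
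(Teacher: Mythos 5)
Your write-up is necessarily a different route from the paper's, because the paper gives no argument at all for this proposition: its ``proof'' is the single line ``See \cite{LBMB-subdyn} and references there.'' What you have reconstructed is, in essence, the standard argument from that literature, and almost all of it is correct: the joint space $\Omega \subseteq \X \times \prob(\partial_F G)$ together with the uniqueness of the Dirac minimal set in $\prob(\partial_F G)$ correctly proves maximality in (1); the twisting argument for (2), the factoring of the boundary action through $G/N$ for (4), and the factor/proximality arguments for (5)--(6) are all sound. (One small citation to add: in (2), deducing an isomorphism between the twisted flow and $\partial_F G$ uses the uniqueness of the universal minimal strongly proximal flow, i.e.\ that the only $G$-endomorphism of $\partial_F G$ is the identity; this is classical and can be found in Glasner's book \cite{Glasner-proxflows}.)

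The one genuine flaw is the ``classical fact'' you propose to cite, namely that point stabilizers of \emph{any} minimal strongly proximal action are amenable. That statement is false. For a counterexample, take $G$ non-amenable and let $G \times G$ act on $\partial_F G$ through the first coordinate: this action is still minimal and strongly proximal (orbits of points and of measures are unchanged), yet the stabilizer of $x$ is $G_x \times G$, which is not amenable. Concretely, $\mathrm{SL}_3(\Z) \acts \mathbb{P}^2(\R)$ is minimal and strongly proximal with point stabilizers containing copies of $\mathrm{SL}_2(\Z)$. What is true --- and is exactly what you use, since you invoke the fact only for $\partial_F G$ itself --- is the theorem of Breuillard--Kalantar--Kennedy--Ozawa \cite{BKKO} that stabilizers of the $G$-action on the Furstenberg boundary are amenable; this is also the source of amenability of $\A_G$ in \cite{LBMB-subdyn}. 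The distinction is not cosmetic: amenability of $\A_G$ cannot be recovered from the rest of your argument (your construction in (1) yields maximality, not amenability of the stabilizers themselves), so this input really must enter as that specific theorem about $\partial_F G$ rather than as a general property of strongly proximal actions. With the citation corrected, your proof is complete.
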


\begin{proof}
See \cite{LBMB-subdyn} and references there. 
\end{proof}

\begin{lem} \label{lem-N-minimal-AG}
Let $N$ be a normal subgroup of $G$ such that $H \leq N$ for every $H \in \A_G$. Then $\A_N = \A_G$. In particular $N$ acts minimally on $\A_G$.
\end{lem}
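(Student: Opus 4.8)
The plan is to identify $\A_N$ and $\A_G$ as the largest amenable URSs of their respective groups and to play the two extremal properties against each other, exploiting that $\preccurlyeq$ is a partial order on $\mathrm{URS}(G)$ \cite[Cor.\ 2.15]{LBMB-subdyn}. The hypothesis that $H \leq N$ for every $H \in \A_G$ is used precisely to regard $\A_G$ as a subset of $\sub(N)$, so that the two URSs live in a common space and can be compared by $\preccurlyeq$.

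First I would establish $\A_G \preccurlyeq \A_N$. Since $\A_G$ is $G$-invariant it is in particular $N$-invariant; it is closed and non-empty, consists of amenable subgroups by Proposition \ref{prop-AG}(1) for $G$, and by the hypothesis it is contained in $\sub(N)$. Thus $\A_G$ is an admissible test set for the extremal property of $\A_N$, and Proposition \ref{prop-AG}(1) applied to the group $N$ yields $\A_G \preccurlyeq \A_N$.

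For the reverse comparison I would check that $\A_N$ is itself a URS of $G$ inside $\sub(G)$ consisting of amenable subgroups, so that Proposition \ref{prop-AG}(1) for $G$ gives $\A_N \preccurlyeq \A_G$. Here $\A_N$ is closed in $\sub(N)$, hence closed in $\sub(G)$, and amenable by Proposition \ref{prop-AG}(1) for $N$. The crucial point, and the step I expect to be the main obstacle, is to upgrade the $N$-invariance of $\A_N$ to full $G$-invariance and $G$-minimality. The $G$-invariance follows because for $g \in G$ conjugation by $g$ is an automorphism of $N$ (by normality), under which $\A_N$ is invariant by Proposition \ref{prop-AG}(2) for $N$. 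Then $G$-minimality is automatic: for $x \in \A_N$ one has $\overline{Nx} = \A_N$ by $N$-minimality, whence $\A_N \subseteq \overline{Gx} \subseteq \A_N$, so $\A_N$ is a genuine $G$-URS. With both $\A_G \preccurlyeq \A_N$ and $\A_N \preccurlyeq \A_G$ now established between two elements of $\mathrm{URS}(G)$, antisymmetry of $\preccurlyeq$ forces $\A_N = \A_G$. The final assertion is then immediate: $\A_N$ is $N$-minimal by definition, so $N$ acts minimally on $\A_G = \A_N$.
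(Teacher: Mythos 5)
Your proposal is correct and follows essentially the same route as the paper: both directions $\A_G \preccurlyeq \A_N$ and $\A_N \preccurlyeq \A_G$ are obtained from the maximality property of the Furstenberg URS (Proposition \ref{prop-AG}(1)) applied to $N$ and to $G$ respectively, with the $\mathrm{Aut}(N)$-invariance of $\A_N$ used to get $G$-invariance, and antisymmetry of $\preccurlyeq$ on $\mathrm{URS}(G)$ concluding. Your explicit verification that $N$-minimality upgrades to $G$-minimality of $\A_N$ is a detail the paper leaves implicit, and it is a welcome addition.
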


\begin{proof}
$\A_N$ being $\mathrm{Aut}(N)$-invariant, it is $G$-invariant. Hence $\A_N$ is an amenable URS of $G$. So $\A_N \preccurlyeq \A_G$. On the other hand $\A_G$ is a closed $N$-invariant subset of $\sub(N)$ consisting of amenable subgroups, so $\A_G \preccurlyeq \A_N$. Since $\preccurlyeq$ is an order in restriction to URSs, $\A_G = \A_N$.
\end{proof}

\begin{defi}
We denote by $\F$ the class of groups whose Furstenberg URS is a singleton. 
\end{defi}

\begin{lem} \label{lem-approx-furst-URS}
	Suppose $G = \bigcup_I G_i$ is the directed union of subgroups $G_i$ such that eventually $G_i$ is in $\F$ (resp. $\mathcal{A}_{G_i}$ is trivial). Then $G$ is in $\F$ (resp. $\mathcal{A}_{G}$ is trivial). 
\end{lem}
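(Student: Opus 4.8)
The plan is to reduce both statements to the single claim that $\A_G = \{\mathrm{Rad}(G)\}$, with the extra conclusion $\mathrm{Rad}(G) = \{1\}$ in the trivial case. First I would pass to a cofinal subset of the index set, so that $G_i \in \F$ (resp. $\A_{G_i}$ is trivial) for \emph{all} $i$; since the union is directed this does not change $G = \bigcup_i G_i$. For each $i$ I would then consider the restriction map $r_i \colon \sub(G) \to \sub(G_i)$, $H \mapsto H \cap G_i$, which is continuous and $G_i$-equivariant. The image $r_i(\A_G)$ is a non-empty closed $G_i$-invariant subset of $\sub(G_i)$ consisting of amenable subgroups, so by the maximality statement in Proposition \ref{prop-AG}(1) it satisfies $r_i(\A_G) \preccurlyeq \A_{G_i} = \{\mathrm{Rad}(G_i)\}$. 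Writing $R_i = \mathrm{Rad}(G_i)$, this produces at least one $H \in \A_G$ with $H \cap G_i \leq R_i$.

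The next step is to upgrade this to: every $H \in \A_G$ satisfies $H \cap G_i \leq R_i$ for all $i$. Here I would set $C_i = \{H \in \A_G : H \cap G_i \leq R_i\}$, which is closed and, by the previous step, non-empty. Using that for $i \leq j$ the subgroup $R_j \cap G_i$ is amenable and normal in $G_i$, hence contained in $R_i$, one checks that $(C_i)$ is decreasing along the directed order, so $C := \bigcap_i C_i$ is non-empty by compactness (the family has the finite intersection property). The key point is that although each $C_i$ is only $G_i$-invariant, the intersection $C$ is globally $G$-invariant: given $H \in C$ and $g \in G$, choosing for each $i$ an index $m \geq i$ with $g \in G_m$ gives $gHg^{-1} \cap G_m = g(H \cap G_m)g^{-1} \leq R_m$, and therefore $gHg^{-1} \cap G_i \leq R_m \cap G_i \leq R_i$. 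Since $C$ is closed, $G$-invariant and non-empty, minimality of $\A_G$ forces $C = \A_G$.

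Finally I would introduce the eventual local radical $R_\infty := \bigcup_i \bigcap_{j \geq i} R_j$. This is an increasing union, over the directed set, of subgroups each contained in some $R_i$, hence amenable, and a direct computation using normality of $R_j$ in $G_j$ shows that $R_\infty$ is normal in $G$; consequently $R_\infty \leq \mathrm{Rad}(G)$. Conversely, for $H \in \A_G$ and $1 \neq h \in H$, picking $i_0$ with $h \in G_{i_0}$ yields $h \in H \cap G_j \leq R_j$ for every $j \geq i_0$, so $h \in R_\infty$; thus $H \leq R_\infty \leq \mathrm{Rad}(G)$. Combined with $\mathrm{Rad}(G) \leq H$ from Proposition \ref{prop-AG}(3), this gives $H = \mathrm{Rad}(G)$ for every $H \in \A_G$, i.e. $\A_G = \{\mathrm{Rad}(G)\}$, so $G \in \F$ by Proposition \ref{prop-AG}(5). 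In the trivial case each $R_j$ is trivial, hence $R_\infty$ is trivial, forcing $\mathrm{Rad}(G) = \{1\}$ and $\A_G$ trivial. The main obstacle I anticipate is the passage from ``some $H$'' to ``every $H$'' in the second step: the restriction maps are only $G_i$-equivariant, so the per-$i$ conclusion is not $G$-invariant, and it is exactly the intersection over the directed family together with minimality of $\A_G$ that repairs this. A related trap is that one must \emph{not} expect $R_i \leq \mathrm{Rad}(G)$, which is false in general, and instead pass to the eventual intersection $R_\infty$, whose amenability and $G$-normality are what make the argument close.
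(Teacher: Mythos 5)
Your proof is correct. Its first half coincides with the paper's: both use the continuous, $G_i$-equivariant restriction maps $H \mapsto H \cap G_i$ together with the maximality of $\A_{G_i}$ among non-empty closed $G_i$-invariant amenable subsets of $\sub(G_i)$ (Proposition \ref{prop-AG}) to produce, for each $i$, some $H \in \A_G$ with $H \cap G_i \leq R_i$. Where you diverge is the limiting step. The paper takes the resulting net $(K_i)$ in $\A_G$, passes to a subnet along which $(K_i)$ converges to some $K \in \A_G$ and $(R_i)$ converges to some subgroup $R$, asserts that $R$ is normal and amenable (hence $R \leq \mathrm{Rad}(G)$), deduces $K \leq R$, and concludes from $\A_G \preccurlyeq \left\lbrace \mathrm{Rad}(G) \right\rbrace$ via the order properties of $\preccurlyeq$. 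You instead work with the closed sets $C_i = \left\lbrace H \in \A_G : H \cap G_i \leq R_i \right\rbrace$, observe that they are nested (via the observation $R_j \cap G_i \leq R_i$ for $i \leq j$), extract a non-empty intersection $C$ by compactness, prove that $C$ is $G$-invariant, and invoke minimality of $\A_G$ to force $C = \A_G$; your explicit ``eventual radical'' $R_\infty$ then plays the role of the paper's limit $R$. The two mechanisms are close in spirit: both hinge on producing a normal amenable subgroup of $G$ containing elements of $\A_G$, and the verification that your $R_\infty$ (or the paper's $R$) is normal and amenable rests on exactly the observation $R_j \cap G_i \leq R_i$, which the paper leaves implicit. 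What your version buys: it avoids nets and subnets altogether, it establishes the stronger pointwise statement that \emph{every} $H \in \A_G$ satisfies $H \cap G_i \leq R_i$ (the paper only produces one such element per index, which suffices for its $\preccurlyeq$ argument), and it makes all compactness and invariance verifications explicit. What the paper's version buys is brevity, at the cost of a couple of unproved assertions about the limit subgroup $R$ that your write-up supplies in full.
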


\begin{proof}
Write $R_i$ for the amenable radical of $G_i$, so that eventually $\mathcal{A}_{G_i} = \left\lbrace R_i\right\rbrace $. Consider $\varphi_i: \sub(G) \to \sub(G_i)$, $H \mapsto H \cap G_i$. This map is continuous and $G_i$-equivariant. Take $H \in \mathcal{A}_{G}$. The subset $\mathcal{X}_i := \overline{(H \cap G_i)^{G_i}}$ is a closed $G_i$-invariant subset of $\sub(G_i)$ consisting of amenable subgroups, so $\mathcal{X}_i \preccurlyeq \mathcal{A}_{G_i} = \left\lbrace R_i\right\rbrace $ by Proposition \ref{prop-AG}. Since  $\varphi_i(\A_G)$ is closed and $G_i$-invariant, $\mathcal{X}_i \subseteq \varphi_i(\A_G)$. We infer that there exists $K_i \in \A_G$ such that $K_i \cap G_i \leq R_i$. Upon passing to a subnet we may assume that $(K_i)$ converges to some $K \in \mathcal{A}_{G}$ and $(R_i)$ converges to some $R$. The subgroup $R$ is normal and amenable, so $R \leq \mathrm{Rad}(G)$. Since $G = \bigcup_I G_i$, $(K_i \cap G_i)$ also converges to  $K$, and the inclusion $K_i \cap G_i \leq R_i$ then implies $K \leq R$. So $\A_G \preccurlyeq \left\lbrace \mathrm{Rad}(G) \right\rbrace $, which means that $\A_G = \left\lbrace \mathrm{Rad}(G) \right\rbrace $ by Proposition \ref{prop-AG}. We also immediately obtain that in case $R_i$ is trivial eventually, then $\A_G$ is trivial.
\end{proof}

\subsection{Proofs of Proposition  \ref{prop-intro-pro-F-closure-AG} and Theorem \ref{thm-intro-solvable-case}} \label{subsec-proof-Thm1}

Recall that if $\C$ is a class of groups, we denote by $\N_G(\C)$ the normal subgroups of $G$ such that $G/N \in \C$. In the sequel we mainly use this notation with $\C = \F$. We have the following lemma:

\begin{lem} \label{lem-coset-furst-topology}
$\N_G(\F)$ is stable under taking finite intersections.
\end{lem}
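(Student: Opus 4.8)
The plan is to reduce the statement to a property of a single quotient and then analyse its Furstenberg URS. Let $N_1, N_2 \in \N_G(\F)$ and set $N = N_1 \cap N_2$. Writing $Q = G/N$ and $M_i = N_i/N \trianglelefteq Q$, the defining fact that $N = N_1 \cap N_2$ translates into $M_1 \cap M_2 = \{1\}$, while $Q/M_i \cong G/N_i$ lies in $\F$. So it suffices to prove the following: if a group $Q$ has two normal subgroups $M_1, M_2$ with $M_1 \cap M_2 = \{1\}$ and $Q/M_1, Q/M_2 \in \F$, then $Q \in \F$.

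First I would produce, for each $i$, a $Q$-equivariant factor map $\beta_i \colon \partial_F Q \to \partial_F(Q/M_i)$. Indeed, pulling back along the projection $\pi_i \colon Q \to Q/M_i$ turns $\partial_F(Q/M_i)$ into a minimal and strongly proximal $Q$-space, so it is a factor of the universal such space $\partial_F Q$. Because $Q/M_i \in \F$, we have $\mathcal{A}_{Q/M_i} = \{\mathrm{Rad}(Q/M_i)\}$, and by the Frolík continuity statement recalled above every point stabiliser of $\partial_F(Q/M_i)$ equals $\mathrm{Rad}(Q/M_i)$. Setting $R_i := \pi_i^{-1}(\mathrm{Rad}(Q/M_i))$, equivariance of $\beta_i$ then gives $Q_\xi \leq Q_{\beta_i(\xi)} = R_i$ for every $\xi \in \partial_F Q$. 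Since the point stabilisers of $\partial_F Q$ are exactly the members of $\mathcal{A}_Q$, this shows that every $H \in \mathcal{A}_Q$ satisfies $H \leq R_1 \cap R_2$.

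The main point, and the step that genuinely uses $M_1 \cap M_2 = \{1\}$, is to show that the normal subgroup $R_1 \cap R_2$ is amenable. I would argue in two layers. Projecting $R_1 \cap R_2$ by $\pi_2$ lands inside $\mathrm{Rad}(Q/M_2)$ (amenable), with kernel $(R_1 \cap R_2) \cap M_2 = R_1 \cap M_2$ (using $M_2 \leq R_2$); so it is enough to see that $R_1 \cap M_2$ is amenable. Now projecting $R_1 \cap M_2$ by $\pi_1$ has kernel $(R_1 \cap M_2) \cap M_1 = M_1 \cap M_2 = \{1\}$, hence is injective, with image inside $\mathrm{Rad}(Q/M_1)$; thus $R_1 \cap M_2$ embeds into an amenable group and is amenable. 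Being an extension of an amenable group by an amenable group, $R_1 \cap R_2$ is amenable.

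Finally I would conclude: $R_1 \cap R_2$ is a normal amenable subgroup of $Q$, hence contained in $\mathrm{Rad}(Q)$. Combining with Proposition \ref{prop-AG}, which gives $\mathrm{Rad}(Q) \leq H$ for every $H \in \mathcal{A}_Q$, each such $H$ satisfies $\mathrm{Rad}(Q) \leq H \leq R_1 \cap R_2 \leq \mathrm{Rad}(Q)$, so $\mathcal{A}_Q = \{\mathrm{Rad}(Q)\}$ and $Q \in \F$. Unwinding the reduction yields $G/(N_1 \cap N_2) \in \F$, that is $N_1 \cap N_2 \in \N_G(\F)$. The only delicate point is the amenability of $R_1 \cap R_2$; everything else is formal manipulation of factor maps and stabilisers.
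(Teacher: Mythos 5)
Your proof is correct, but it takes a genuinely different route from the paper's. The paper works upstairs in $G$: it forms the diagonal $G$-action on the product $\partial_F(G/N_1) \times \partial_F(G/N_2)$, invokes Glasner's results \cite[III.1]{Glasner-proxflows} that strong proximality passes to products and that such a product has a \emph{unique} minimal closed invariant subset $X$, identifies every point stabilizer of $X$ as being \emph{exactly} $M_1 \cap M_2$ (in your notation, the preimage in $G$ of $R_1 \cap R_2$), so that $G/(M_1 \cap M_2)$ admits a free minimal strongly proximal action and hence lies in $\F$; it then descends to $G/(N_1 \cap N_2)$ using the invariance of $\F$ under extensions by amenable normal subgroups (Proposition \ref{prop-AG}), the kernel being amenable because it embeds diagonally into $\mathrm{Rad}(G/N_1) \times \mathrm{Rad}(G/N_2)$. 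You instead work downstairs in $Q = G/(N_1 \cap N_2)$ and replace the product construction by two separate factor maps $\partial_F Q \to \partial_F(Q/M_i)$ obtained from the universal property of the Furstenberg boundary; this yields only the \emph{containment} of stabilizers in $R_1 \cap R_2$ rather than an exact identification, and the weight of the proof is then carried by your algebraic verification that $R_1 \cap R_2$ is amenable (which is where $M_1 \cap M_2 = \{1\}$ enters) together with the sandwich $\mathrm{Rad}(Q) \leq H \leq R_1 \cap R_2 \leq \mathrm{Rad}(Q)$ from Proposition \ref{prop-AG}. Your two-step extension argument for amenability is essentially an unwinding of the paper's one-line diagonal embedding, so the real difference is dynamical: you avoid entirely the facts about products of strongly proximal actions and unique minimal subsets, at the cost of a longer algebraic computation; the paper's product argument is heavier machinery but identifies the boundary action precisely. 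Both proofs rest on the same two essential inputs: Frol{í}k continuity (so that a group in $\F$ has \emph{all} boundary stabilizers equal to its amenable radical) and the listed properties of the amenable radical and of $\A_G$.
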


\begin{proof}
Let $N_1, N_2 \in \N_G(\F)$. Let $Q_i = G/N_i$, and let $R_i = \mathrm{Rad}(Q_i)$ be the amenable radical of $Q_i$. Let $\pi_i: G \to Q_i$ be the canonical projection, and $M_i := \pi_i^{-1}(R_i)$. Let also $X_i = \partial_F Q_i$. The subgroup $R_i$ acts trivially on $X_i$, and the assumption that $Q_i$ belongs to $\F$ means that the $Q_i / R_i$-action on $X_i$ is free.

We consider the $G$-action on the product $X_1 \times X_2$. This action remains strongly proximal \cite[III.1]{Glasner-proxflows}. It follows that  there exists a unique minimal closed $G$-invariant subset $X \subseteq X_1 \times X_2$, and the $G$-action on $X$ is strongly proximal \cite[III.1]{Glasner-proxflows}. The subgroup $M_1 \cap M_2$ of $G$ acts trivially on $X_1 \times X_2$, and hence on $X$. Moreover since the $Q_i / R_i$-action on $X_i$ is free, it follows that for the $G$-action on $X$, every point stabilizer is equal to $M_1 \cap M_2$. Equivalently, the $G$-action on $X$ factors through a free action of $G/M_1 \cap M_2$. In particular $G/M_1 \cap M_2$ is in $\F$. Since the group $M_1 \cap M_2/ N_1 \cap N_2$ is amenable (as it embeds in the amenable group $R_1 \times R_2$), and since being in $\F$ is invariant under forming an extension with amenable normal subgroup, it follows that $G/N_1 \cap N_2$ is in $\F$.
\end{proof}

As a consequence of the lemma, it follows that the family of cosets $g N$, $g \in G$, $N \in \N_G(\F)$, forms a basis for a group topology $\tau_{\N_G(\F)}$ on $G$, and that the closure of $H$ with respect to this topology is equal to $\mathrm{cl}_{\N_G(\F)}(H)$ (Proposition  \ref{prop-coset-topology-bourbaki}).

The proof of the following is the technical part of this section. 

\begin{prop} \label{prop-co-F-subgroup}
Let $G$ be a countable group. Then for every $N \in \N_G(\F)$,  there exists a normal subgroup $M$ of $G$ such that $N \leq M$ and $M/N \leq \mathrm{Rad}(G/N)$, and a comeager subset $\H_0 \subseteq \A_G$ such that $NH = M$ for every $H \in \H_0$.
\end{prop}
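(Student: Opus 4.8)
The plan is to produce the normal subgroup $M$ explicitly from the quotient $Q := G/N$, and then to show that the generic value of the lower semicontinuous map $H \mapsto HN$ on $\A_G$ equals $M$ by combining the semicontinuity machinery of Proposition \ref{prop-usc} with the strong proximality of $G \acts \A_G$.

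\emph{Construction of $M$ and the upper bound.} Let $\pi \colon G \to Q = G/N$ be the quotient map and $R = \mathrm{Rad}(Q)$; set $M := \pi^{-1}(R)$. Then $M$ is normal in $G$, $N \le M$, and $M/N = R = \mathrm{Rad}(G/N)$, so the asserted properties of $M$ hold. Since $N \in \N_G(\F)$ we have $Q \in \F$, i.e.\ $\A_Q = \{R\}$ is a singleton; as the stabilizer map on $\partial_F Q$ is continuous and $\A_Q$ is exactly its set of point stabilizers, every $y \in \partial_F Q$ has $Q$-stabilizer $R$, hence $G$-stabilizer $\pi^{-1}(R) = M$. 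The space $\partial_F Q$ is a minimal strongly proximal $G$-space through $\pi$, so by universality of the Furstenberg boundary there is a $G$-factor $p \colon \partial_F G \to \partial_F Q$. For $x \in \partial_F G$ one gets $G_x \le G_{p(x)} = M$, whence $HN \le M$ for every $H \in \A_G$.

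\emph{Reduction via Proposition \ref{prop-usc}.} The map $\Phi \colon \A_G \to \sub(G)$, $H \mapsto HN$, is $G$-equivariant and lower semicontinuous (Lemma \ref{lem-quotient-urs-lsc}), and $\A_G$, being a continuous factor of $\partial_F G$ under the stabilizer map, is a minimal strongly proximal $G$-space. Applying Proposition \ref{prop-usc} with $Y = G$ (discrete, hence second countable), the set $T_\Phi = \overline{\{HN : H \in \A_G\}}$ has a unique minimal $G$-invariant subset $S_\Phi'$, and $S_\Phi'$ equals the closure of $\{HN\}$ taken over the comeager set of continuity points of $\Phi$. The key observation is that it suffices to prove $M \in T_\Phi$: since $M$ is normal, $\{M\}$ is a $G$-invariant singleton, hence a minimal subsystem, and by uniqueness $S_\Phi' = \{M\}$; this forces $HN = M$ on the comeager set $\H_0$ of continuity points of $\Phi$, which is the desired conclusion. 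Note that the upper bound and this reduction are formal and use only minimality.

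\emph{The lower bound $M \in T_\Phi$ (main obstacle).} Since every $HN \le M$, it is enough to find, for each finite subset $\bar F \subseteq R = M/N$, a subgroup $H \in \A_G$ with $\bar F \subseteq \pi(H)$; the corresponding subgroups $HN$ then converge to $M$, giving $M \in T_\Phi$. Fix $y \in \partial_F Q$ with $G$-stabilizer $M$ and consider the fibre $F_y = p^{-1}(y)$, on which $M$ acts; standard boundary theory identifies $F_y$ as an $M$-boundary. Because $M/N = R$ is amenable, $N$ is co-amenable in $M$, so $N$ still acts minimally and strongly proximally on $F_y$. The crux is then to use the strong proximality of the $M$-action on $F_y$ together with the amenability of $R$ to produce a point $x \in F_y$ whose stabilizer surjects onto the prescribed finite part $\bar F$ of $R$, i.e.\ $\bar F \subseteq \pi(G_x)$; equivalently, that suitable points of $F_y$ have coinciding $N$-orbit and $M$-orbit. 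This is the step I expect to be the main difficulty: one must convert the amenability of the quotient $R$ into the existence of boundary points whose stabilizers project onto $R$, and this is precisely where strong proximality is indispensable, the amenable group $R$ being forced to exhibit such invariance along the fibration $p$.
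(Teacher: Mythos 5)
Your construction of $M$ is wrong, and the step you flag as ``the main difficulty'' is not merely unproven --- it is false in general. You set $M := \pi^{-1}(\mathrm{Rad}(G/N))$ and claim $M/N = \mathrm{Rad}(G/N)$; note that the proposition only asserts the \emph{inclusion} $M/N \leq \mathrm{Rad}(G/N)$, and this is not an accident. Take $G = F_2$ and $N = [F_2,F_2]$. Then $G/N \cong \Z^2$ is amenable, hence lies in $\F$ with $\mathrm{Rad}(G/N) = G/N$, so your $M$ would be all of $G$. But $F_2$ is linear, hence in $\F$, with trivial amenable radical, so $\A_{F_2} = \left\lbrace \left\lbrace 1 \right\rbrace \right\rbrace$; consequently $\left\lbrace HN : H \in \A_G \right\rbrace = \left\lbrace N \right\rbrace$, and the generic (indeed the only) value of $H \mapsto HN$ is $N$, not $G$. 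In particular your ``lower bound'' $M \in T_\Phi$ fails, and your fibre argument cannot be repaired: here $\partial_F(G/N)$ is a point, the fibre is all of $\partial_F G$, and every stabilizer $G_x$ is trivial, so $\pi(G_x)$ contains no non-trivial finite subset $\bar F$ of $R$. The generic value of $HN$ is some normal subgroup between $N$ and $\pi^{-1}(\mathrm{Rad}(G/N))$, and it can be as small as $N$; no formula for it in terms of $G/N$ alone can work, since it depends on $\A_G$.

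What survives of your argument is the formal part, which agrees with the paper: lower semi-continuity of $H \mapsto NH$ (Lemma \ref{lem-quotient-urs-lsc}), the comeager set $\H_0$ of continuity points, the unique minimal subsystem $S_{\varphi_N}$ of $\overline{\left\lbrace NH : H \in \A_G \right\rbrace}$ given by Proposition \ref{prop-usc}, and the observation that the conclusion amounts to $S_{\varphi_N}$ being a singleton. The paper does not guess the singleton in advance; it proves abstractly that $S_{\varphi_N}$ must be a point, as follows. Strong proximality of $\A_G$ passes to the highly proximal extension $E_{\varphi_N}$ and then to its factor $S_{\varphi_N}$, so the image $\pi_N(S_{\varphi_N})$ in $\sub(G/N)$ is an amenable, minimal and strongly proximal $G/N$-system; since $G/N \in \F$, it is contained in $\sub(R)$ where $R = \mathrm{Rad}(G/N)$, and $R$ acts trivially on it (an amenable normal subgroup acts trivially on any minimal strongly proximal space). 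Hence the envelope $E = \env(S_{\varphi_N})$ satisfies $E/N \leq R$ and therefore acts trivially on $S_{\varphi_N}$. On the other hand $E$ contains every $H \in \A_G$, so Lemma \ref{lem-N-minimal-AG} shows $E$ acts minimally on $\A_G$, hence minimally on $S_{\varphi_N}$ (minimality passes through the irreducible extension $E_{\varphi_N}$ and its factor). An action that is both trivial and minimal forces $S_{\varphi_N}$ to be one point, whose value is then the required $M$. This is the mechanism your sketch is missing: the amenability of $R$ enters through triviality of its action on a strongly proximal system, not through any surjectivity of stabilizers onto $R$.
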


\begin{proof}
	The map $\varphi_N : \A_G \to \sub(G)$, $H \mapsto NH$, is lower semi-continuous by Lemma  \ref{lem-quotient-urs-lsc}. Hence the set $\H_0 \subseteq \A_G$ of points where $\varphi_N$ is continuous is a comeager subset of $\A_G$, and
	
	\[ E_{\varphi_N} = \overline{ \left\{ \left( H, NH  \right)  \, : \, H  \in \H_0 \right\}} \, \text{ and } \,  
	S_{\varphi_N} = \overline{ \left\{ NH \, : \, H  \in \H_0 \right\}} \] satisfy the conclusion of Proposition \ref{prop-usc}. Note that  $S_{\varphi_N}$ is contained in the closed subset $\sub_{\geq N}(G)$ of $\sub(G)$ consisting of subgroups of $G$ containing $N$.

	Since the URS $\A_G$ is strongly proximal and strong proximality passes to highly proximal extensions and factors \cite[Lemma 5.2]{Glasner-compress}, we deduce that the $G$-action on  $S_{\varphi_N}$ is minimal and strongly proximal. The map $\pi_N : S_{\varphi_N} \to \sub(G/N)$, $K \to K/N$, is a $G$-equivariant homeomorphism onto its image (indeed, one easily verifies that modding out by $N$ defines a homeomorphism from  $\sub_{\geq N}(G)$ onto $\sub(G/N)$). Hence $\pi_N(S_{\varphi_N})$ is a strongly proximal URS of $G/N$. Moreover $\pi_N(S_{\varphi_N})$  consists of amenable subgroups. If we let $R$ be the amenable radical of $G/N$, it follows from the assumption that $G/N$ belongs to $\F$ that $\pi_n(S_{\varphi_N})$ is contained in $\sub(R)$. On the other hand $R$ must act trivially on $\pi_N(S_{\varphi_N})$ by minimality and strong proximality. 
	
	Consider the envelope $E = \env(S_{\varphi_N})$. Since $\pi_N(S_{\varphi_N}) \subseteq \sub(R)$, we have $E /N \leq R$. So by the previous paragraph and the fact that $\pi_N : S_{\varphi_N} \to \pi_N(S_{\varphi_N})$ is a $G$-equivariant homeomorphism, it follows that $E$ acts  trivially on $S_{\varphi_N}$. On the other hand $E$ contains $H$ for every $H \in \H_0$, and since $\H_0$ is dense in $\A_G$ this easily implies that $E$ contains $H$ for every $H \in \A_G$. Hence Lemma \ref{lem-N-minimal-AG} can be applied to $E$, and we infer that $E$ acts minimally on $\A_G$. Using Proposition \ref{prop-usc} and the fact that minimality passes to irreducible extensions, we deduce that $E$ acts minimally on $S_{\varphi_N}$. All together, this shows that $S_{\varphi_N}$ is a one-point space. The corresponding normal subgroup $M$ of $G$ verifies the conclusion. 
\end{proof}

We  deduce Proposition  \ref{prop-intro-pro-F-closure-AG} from the introduction, that we state again for the reader's convenience:

\begin{prop} \label{prop-pro-F-closure-AG}
	Let $G$ be a countable group, and let $\N$ be a countable subset of $\N_G(\F)$. Then there exists a normal subgroup $M$ of $G$ and a comeager subset $\H_0 \subseteq \A_G$ such that $\cl_\N(H) = M$ for every $H \in \H_0$.
\end{prop}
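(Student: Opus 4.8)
The plan is to reduce the statement to the single-subgroup case already established in Proposition~\ref{prop-co-F-subgroup}, and then to glue the resulting information over the countable family $\N$ by means of a Baire category argument. The starting point is the observation that, by definition, $\cl_\N(H) = \bigcap_{N \in \N} HN$, and that $HN = NH$ for each $N \in \N$ since such an $N$ is normal in $G$; thus $\cl_\N(H) = \bigcap_{N \in \N} NH$.

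Concretely, I would first apply Proposition~\ref{prop-co-F-subgroup} to each $N \in \N$ separately. This is legitimate because $\N \subseteq \N_G(\F)$, so every $N \in \N$ satisfies the hypothesis $N \in \N_G(\F)$. For each such $N$ this yields a normal subgroup $M_N$ of $G$ together with a comeager subset $\H_N \subseteq \A_G$ such that $NH = M_N$ for every $H \in \H_N$. I would then set $M := \bigcap_{N \in \N} M_N$, which is again a normal subgroup of $G$ as an intersection of normal subgroups, and $\H_0 := \bigcap_{N \in \N} \H_N \subseteq \A_G$. For any $H \in \H_0$ one then has $\cl_\N(H) = \bigcap_{N \in \N} NH = \bigcap_{N \in \N} M_N = M$, which is exactly the asserted conclusion.

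The only remaining point, and the one requiring some care, is to verify that $\H_0$ is still comeager. This is where the countability of $\N$ is essential: a countable intersection of comeager subsets of a Baire space is again comeager. It therefore suffices to check that $\A_G$ is a Baire space. Since $G$ is countable, $\sub(G)$ is second-countable, hence compact metrizable, and $\A_G$, being a closed (indeed minimal) subset of $\sub(G)$, is itself compact metrizable and thus Baire. I expect this Baire category step to be the main—and essentially the only—obstacle, the substantive dynamical content having already been carried out in Proposition~\ref{prop-co-F-subgroup}; the present argument is a straightforward passage from one normal subgroup to a countable family.
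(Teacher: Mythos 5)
Your proposal is correct and follows essentially the same route as the paper: apply Proposition~\ref{prop-co-F-subgroup} to each $N \in \N$, intersect the resulting subgroups $M_N$ and comeager sets $\H_N$, and use countability of $\N$ to keep the intersection comeager. Your explicit verification that $\A_G$ is a Baire space (compact metrizable since $G$ is countable) is a minor point the paper leaves implicit, but otherwise the two arguments coincide.
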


\begin{proof}
	We apply Proposition \ref{prop-co-F-subgroup} for every $N \in \N$. We obtain a normal subgroup $M_N$ of $G$ and a comeager subset  $\H_N$  of $\A_G$.	Set $\H_0 = \bigcap_\N \H_N$ and $M = \bigcap_\N M_N$. Since $\N$ is countable, $\H_0$ is a comeager subset of $\H$. By construction for every $H \in H_0$, \[cl_\N(H) = \bigcap_\N NH =  \bigcap_\N M_N = M. \]
\end{proof}

\begin{defi} \label{defi-laws-detect}
	We say that a set of laws $\mathbb{W}$ \textbf{detects amenability} in a group $G$ if for every subgroup $H$ of $G$, one has that  $H$ is amenable if and only if there exists $w \in \mathbb{W}$ such that $H$ virtually satisfies $w$.
\end{defi}

Note that if a set of laws detects amenability in a group $G$, it also detects amenability in any subgroup of $G$.

Recall that a group $G$ satisfies the Tits alternative if every subgroup of $G$ is either virtually solvable or contains a non-abelian free subgroup. Beyond the original result of Tits about linear groups \cite{Tits72}, many classes of groups are known to satisfy the Tits alternative. Examples include hyperbolic groups \cite{Gromov-hyp}, the outer-automorphism group of a finitely generated free group \cite{BFH-1,BFH-2} and of many free products \cite{Horbez}, groups acting properly on a finite-dimensional CAT(0) cube complex and with finite subgroups of bounded order \cite{Sageev-Wise}, the  group of polynomial automorphisms of $\mathbb{C}^2$ \cite{Lamy}, the Cremona group of birational transformations of the projective space $\mathbb{P}^2(\mathbb{C})$ \cite{Cantat,Urech}. 


\begin{prop} \label{prop-Tits-law-detects}
If a group $G$ satisfies the Tits alternative, then there is a set of laws that detects amenability in $G$.
\end{prop}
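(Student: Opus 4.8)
The plan is to take $\mathbb{W}$ to be the family of solvability laws $(w_\ell)_{\ell \geq 1}$ introduced in Example \ref{ex-law-solv}, and to exploit the fact that, for a group satisfying the Tits alternative, a subgroup is amenable precisely when it is virtually solvable. The entire content of the proof is the identification of ``virtually satisfies some $w_\ell$'' with ``virtually solvable''.

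First I would observe that the Tits alternative pins down the amenable subgroups of $G$ exactly. Any subgroup $H \leq G$ is either virtually solvable or contains a non-abelian free subgroup. In the latter case $H$ is non-amenable, since non-abelian free groups are non-amenable and amenability passes to subgroups. In the former case $H$ is amenable, because solvable groups are amenable and amenability is stable under passing to finite-index overgroups. Hence, for every subgroup $H$ of $G$, one has that $H$ is amenable if and only if $H$ is virtually solvable.

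Next I would match ``virtually solvable'' with ``virtually satisfies some $w_\ell$''. By Example \ref{ex-law-solv}, a group satisfies $w_\ell$ if and only if it is solvable of derived length at most $\ell$; therefore a group is solvable (of some length) if and only if it satisfies $w_\ell$ for some $\ell \geq 1$. Consequently $H$ is virtually solvable if and only if some finite-index subgroup of $H$ satisfies some $w_\ell$, which is exactly the assertion that $H$ virtually satisfies a law in $\mathbb{W}$. Combining this with the previous paragraph yields, for every subgroup $H \leq G$, the equivalence between $H$ being amenable and $H$ virtually satisfying a law in $\mathbb{W}$. This is precisely the statement that $\mathbb{W}$ detects amenability in $G$ in the sense of Definition \ref{defi-laws-detect}.

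There is no serious obstacle here: the argument is a direct combination of the Tits alternative with the explicit solvability laws of Example \ref{ex-law-solv}. The only points requiring care are the two classical stability properties of amenability used in the first step---closure under passage to subgroups and under finite-index overgroups---and the remark already noted after Definition \ref{defi-laws-detect} that a set of laws detecting amenability in $G$ automatically does so in every subgroup, which makes the subgroup-by-subgroup verification legitimate.
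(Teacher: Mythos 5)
Your proof is correct and follows exactly the paper's argument: take $\mathbb{W} = \left\lbrace w_\ell \right\rbrace_{\ell \geq 1}$ from Example \ref{ex-law-solv} and use the Tits alternative to identify amenable subgroups with virtually solvable ones, which are precisely those virtually satisfying some $w_\ell$. Your version merely spells out in more detail the standard stability properties of amenability that the paper leaves implicit.
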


\begin{proof}
For $\ell \geq 1$, let $w_\ell$ be the law from Example \ref{ex-law-solv}. So a group is solvable of length at most $\ell$ if and only if it satisfies $w_\ell$. Let $H$ be a subgroup of $G$. Since $G$ satisfies the Tits alternative, $H$ is amenable if and only if $H$ is virtually solvable, if and only if there is $\ell$ such that $H$ virtually satisfies $w_\ell$. So $\mathbb{W} = \left\lbrace w_\ell \right\rbrace_{\ell \geq 1}$ detects amenability in $G$.
\end{proof}

The following is an immediate consequence of the definition and Lemma \ref{lem-closure-keeps-law}.

\begin{lem} \label{lem-closure-remains-amen}
	Let $G$ be a group such that there is a set of laws that detects amenability in $G$, and let $(G,\tau)$ be a group topology on $G$ that is Hausdorff. Then for every amenable subgroup $H$ of $G$, the $\tau$-closure of $H$ remains amenable. 
\end{lem}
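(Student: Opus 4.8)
The plan is to reduce everything to the law-closure statement of Lemma \ref{lem-closure-keeps-law}, the only extra ingredient being that finite index is preserved when passing to topological closures. Write $\mathbb{W}$ for a set of laws detecting amenability in $G$, and denote by $\overline{E}$ the $\tau$-closure of a subset $E \subseteq G$.

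First I would use the detection hypothesis on $H$ itself: since $H$ is amenable and $\mathbb{W}$ detects amenability in $G$, there exist a law $w \in \mathbb{W}$ and a finite index subgroup $H_0 \leq H$ that satisfies $w$. As $(G,\tau)$ is Hausdorff, Lemma \ref{lem-closure-keeps-law} applies and shows that the closure $\overline{H_0}$ again satisfies $w$.

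Next I would check that $\overline{H_0}$ has finite index in $\overline{H}$. Choosing finitely many coset representatives $g_1,\ldots,g_n \in H$ with $H = \bigcup_{i=1}^n g_i H_0$, and using that in a topological group the closure of a finite union equals the union of the closures, while each left translation is a homeomorphism, one obtains $\overline{H} = \bigcup_{i=1}^n g_i\, \overline{H_0}$. Hence $\overline{H}$ is covered by at most $n$ left cosets of the subgroup $\overline{H_0}$, so that $[\overline{H} : \overline{H_0}] \leq n < \infty$.

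Putting these together, $\overline{H}$ is a subgroup of $G$ (the closure of a subgroup in a topological group is again a subgroup) that virtually satisfies the law $w \in \mathbb{W}$, witnessed by the finite index subgroup $\overline{H_0}$. Since $\mathbb{W}$ detects amenability in $G$, it follows that $\overline{H}$ is amenable, as desired. No step presents a genuine obstacle here, which matches the remark that the statement is an immediate consequence of the definition and Lemma \ref{lem-closure-keeps-law}; the only point demanding a moment of care is the finite-index assertion, which rests on the compatibility of the closure operation with finite unions of translates.
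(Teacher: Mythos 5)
Your proof is correct and is exactly the argument the paper intends: the paper offers no written proof, declaring the lemma ``an immediate consequence of the definition and Lemma \ref{lem-closure-keeps-law}'', and your write-up supplies precisely the expected details (extract the finite-index subgroup $H_0 \leq H$ satisfying a law $w \in \mathbb{W}$, apply Lemma \ref{lem-closure-keeps-law} to $\overline{H_0}$, and verify via the coset decomposition $\overline{H} = \bigcup_i g_i \overline{H_0}$ that the index stays finite, so that $\overline{H}$ virtually satisfies $w$ and is therefore amenable by Definition \ref{defi-laws-detect}). There is no gap; the finite-index preservation step you single out is indeed the only point requiring care, and you handle it correctly.
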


We note that in view of Proposition \ref{prop-Tits-law-detects}, the following result covers Theorem \ref{thm-intro-solvable-case} from the introduction. And this result applies to all the groups (and to all their subgroups) satisfying the Tits alternative mentioned above. 

\begin{thm} \label{thm-solvable-case-bis}
Let $G$ be a group such that there is a set of laws that detects amenability in $G$. Then $G$ is in $\F$ if and only if $G$ is residually-$\F$.
\end{thm}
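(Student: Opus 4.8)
The plan is to prove the non-trivial implication, namely that residually-$\F$ forces $G \in \F$, since the converse is immediate: if $G \in \F$ then $G/\{1\} \cong G \in \F$, so $\{1\} \in \N_G(\F)$ and $G$ is residually-$\F$. First I would dispose of the countable case, which is the heart of the matter, and then reduce the general statement to it.

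Assume $G$ is countable. Since $G$ is residually-$\F$, for each non-trivial $g \in G$ I would pick $N_g \in \N_G(\F)$ with $g \notin N_g$, and let $\N$ be the family obtained by closing $\{N_g : g \neq 1\}$ under finite intersections. By Lemma \ref{lem-coset-furst-topology} this family stays inside $\N_G(\F)$; it is countable, filtering, and satisfies $\bigcap_{N \in \N} N = \{1\}$, so $\tau_\N$ is Hausdorff by Proposition \ref{prop-coset-topology-bourbaki}. Applying Proposition \ref{prop-pro-F-closure-AG} to this $\N$ yields a normal subgroup $M$ of $G$ and a comeager subset $\H_0 \subseteq \A_G$ with $\cl_\N(H) = M$ for every $H \in \H_0$. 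Because $\N$ is filtering, $\cl_\N(H)$ is the $\tau_\N$-closure of $H$ (Proposition \ref{prop-coset-topology-bourbaki}), and since every member of $\A_G$ is amenable (Proposition \ref{prop-AG}), Lemma \ref{lem-closure-remains-amen} shows that $M$ is amenable. Hence $M \leq \mathrm{Rad}(G)$. This is the step where both hypotheses are genuinely used: residual-$\F$-ness supplies the Hausdorff topology needed for the closure to be well behaved, while law-detection guarantees the closure remains amenable.

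To conclude the countable case I would observe that $H \subseteq \cl_\N(H) = M$ for every $H \in \H_0$ (as $H \subseteq HN$ for all $N$), so $\H_0$ lies in the closed set $\sub(\mathrm{Rad}(G))$; since $\H_0$ is comeager, hence dense, in $\A_G$, every $H \in \A_G$ satisfies $H \leq \mathrm{Rad}(G)$. Combined with the reverse inclusion $\mathrm{Rad}(G) \leq H$ from Proposition \ref{prop-AG}, this forces $H = \mathrm{Rad}(G)$ for all $H \in \A_G$, that is $\A_G = \{\mathrm{Rad}(G)\}$, which is exactly $G \in \F$.

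For arbitrary $G$ I would write $G = \bigcup_i G_i$ as the directed union of its finitely generated subgroups and invoke Lemma \ref{lem-approx-furst-URS}, so that it suffices to show each $G_i \in \F$. Each $G_i$ is countable, and detection of amenability by laws is inherited by subgroups (the remark after Definition \ref{defi-laws-detect}), so the countable case applies as soon as $G_i$ is known to be residually-$\F$. For $1 \neq g \in G_i$ one takes $N \in \N_G(\F)$ with $g \notin N$ and notes that $G_i/(G_i \cap N)$ embeds into $G/N \in \F$. I expect the main obstacle to be precisely this final point: confirming that such a subgroup of an $\F$-group is itself in $\F$, equivalently that residual-$\F$-ness descends to finitely generated subgroups. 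In concrete applications this difficulty evaporates because one in fact has residual finiteness — or residually-$\C$ for a subgroup-closed subclass $\C \subseteq \F$ — so that the quotients $G_i/(G_i \cap N)$ are visibly in $\F$; pinning down the weakest hypothesis on $\F$ under which this descent holds in general is where I would focus the remaining work.
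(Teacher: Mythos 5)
Your proof is, step for step, the paper's own: the same trivial direction, the same countable case (a countable filtering $\N \subseteq \N_G(\F)$ with trivial intersection via Lemma \ref{lem-coset-furst-topology}, then Proposition \ref{prop-pro-F-closure-AG}, then Hausdorffness of $\tau_\N$ together with Lemma \ref{lem-closure-remains-amen} to see that $M$ is amenable, hence $M \leq \mathrm{Rad}(G)$ and $\A_G = \{\mathrm{Rad}(G)\}$ by Proposition \ref{prop-AG}), and the same reduction of the general case to the countable case via directed unions and Lemma \ref{lem-approx-furst-URS}. Your spelled-out endgame (comeager implies dense, so all of $\A_G$ lies in $\sub(\mathrm{Rad}(G))$, forcing $\A_G = \{\mathrm{Rad}(G)\}$) is exactly what the paper's terse appeal to Proposition \ref{prop-AG} amounts to.

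The one point where you stop short --- whether residual-$\F$-ness descends to the finitely generated (or countable) subgroups $G_i$ --- is precisely the point the paper does not address either: its proof simply states that, writing $G$ as the directed limit of its countable subgroups and invoking Lemma \ref{lem-approx-furst-URS}, ``one sees that it suffices to prove the result when $G$ is countable''. That sentence uses exactly the assertion you isolate, namely that $G_i/(G_i \cap N)$, which embeds in $G/N \in \F$, is again in $\F$. Moreover your caution is justified, because neither membership in $\F$ nor residual-$\F$-ness passes to subgroups in general. Indeed, let $G_0$ be one of the finitely generated \emph{virtually simple} groups of \cite{LB-C*simple}, so that $\mathrm{Rad}(G_0)$ is trivial but $G_0 \notin \F$. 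The free product $G_0 \ast \Z$ is acylindrically hyperbolic (equivalently here, a Powers group, hence $C^\ast$-simple), so it belongs to $\F$, and in particular is trivially residually-$\F$; yet its subgroup $G_0$ is not in $\F$, and is not even residually-$\F$: every non-trivial normal subgroup of $G_0$ contains its simple finite-index subgroup $S$ (any normal subgroup meeting $S$ trivially centralizes $S$, hence is a finite normal subgroup, hence trivial), while the quotient by the trivial subgroup is $G_0 \notin \F$. Such examples do not contradict the theorem itself (one does not expect them to satisfy the law-detection hypothesis), but they show that the descent used in the reduction is false as a statement about arbitrary groups, so a correct proof of the uncountable case would have to exploit the law-detection hypothesis at this step --- something neither your sketch nor the paper does. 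In effect, both your proposal and the paper completely prove the theorem for countable $G$, which is all that is used in the applications (in Corollary \ref{cor-linear-F} one has residual \emph{finiteness}, i.e.\ residually-$\C$ for a subgroup-closed subclass $\C \subseteq \F$, exactly the escape route you describe), while the general case rests on the unjustified descent you flagged.
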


\begin{proof}
Only one direction is non-trivial. Writing $G$ as the directed limit of its countable subgroups and invoking Lemma \ref{lem-approx-furst-URS}, one sees that it suffices to prove the result when $G$ is countable. Under this assumption, since $G$ is residually-$\F$, one can find $\N \subseteq \N_G(\F)$ such that $\N$ is countable and $\bigcap_\N N = \left\lbrace 1\right\rbrace $. Since $\N_G(\F)$ is stable under taking finite intersections by Lemma \ref{lem-coset-furst-topology}, we can replace $\N$ by the collection of finite intersections of elements of $\N$, so that we may assume that $\N$ is filtering. So for a subgroup $H$ of $G$, $\cl_\N(H) $ equals the closure of $H$ in the topology $\tau_\N$ (Proposition  \ref{prop-coset-topology-bourbaki}). 
	
	Proposition \ref{prop-pro-F-closure-AG} provides a normal subgroup $M$ of $G$ such that $\cl_\N(H) = M$ for every $H$ in a comeager subset of $\A_G$. The topology $\tau_\N$ is Hausdorff since $\bigcap_\N N = \left\lbrace 1\right\rbrace $, so it follows from Lemma \ref{lem-closure-remains-amen} that the closure of an amenable subgroup of $G$ remains amenable. This shows $M$ is amenable, and it follows that $M \leq \mathrm{Rad}(G)$. By Proposition \ref{prop-AG} this means that $\A_G = \left\lbrace \mathrm{Rad}(G) \right\rbrace $. 
\end{proof}

\begin{rmq}
When the group $G$ is residually finite, there is a shorter way to obtain the conclusion of Theorem  \ref{thm-solvable-case-bis}. Indeed, since the $G$-space $\partial_F G$ is proximal, it is hereditarily minimal \cite[Lemma 3.2]{Glasner-proxflows}. Moreover it follows from the conclusion of  Proposition \ref{prop-usc} that being hereditarily minimal is inherited from a $G$-space to its stabilizer URS. Hence $\A_G$ is a hereditarily minimal URS. Hence Proposition \ref{prop-fi-contains-env} and Theorem \ref{thm-env-keeps-law} apply, and the conclusion follows as above. 
\end{rmq}

\begin{cor} \label{cor-Csimple}
	Let $G$ be a group such that there is a set of laws that detects amenability in $G$, and suppose $\mathrm{Rad}(G)$ is trivial. If $G$ is residually-$\F$, then $G$ is $C^\ast$-simple. 
\end{cor}

\begin{proof}
The result follows from Theorem  \ref{thm-solvable-case-bis} and the main result of \cite{KK}, which asserts that $G$ is in $\F$ if and only if $G/ \mathrm{Rad}(G)$ is $C^\ast$-simple. 
\end{proof}

\subsection{Linear groups}

We deduce Corollary \ref{cor-linear-F} from the introduction, which asserts that linear groups belong to $\F$.

\begin{proof}[Proof of Corollary \ref{cor-linear-F}]
Writing $G$ as the directed limit of its finitely generated subgroups and invoking Lemma \ref{lem-approx-furst-URS}, one sees that without loss of generality we can assume that $G$ is a finitely generated linear group. By Malcev's theorem, the group $G$ is residually finite. Also by the Tits alternative \cite{Tits72}, every amenable subgroup of $G$  is virtually solvable (we are using again that $G$ is finitely generated to have this version of the Tits alternative). Hence all the assumptions of Theorem  \ref{thm-solvable-case-bis} are verified. The conclusion follows.
\end{proof}

\bibliographystyle{amsalpha}
\bibliography{urssandcosettopologies}

\providecommand{\bysame}{\leavevmode\hbox to3em{\hrulefill}\thinspace}
\providecommand{\MR}{\relax\ifhmode\unskip\space\fi MR }
\providecommand{\MRhref}[2]{%
  \href{http://www.ams.org/mathscinet-getitem?mr=#1}{#2}
}
\providecommand{\href}[2]{#2}
\begin{thebibliography}{BKKO17}

\bibitem[AG77]{AG-distal}
J.~Auslander and S.~Glasner, \emph{Distal and highly proximal extensions of
  minimal flows}, Indiana Univ. Math. J. \textbf{26} (1977), no.~4, 731--749.
  \MR{442906}

\bibitem[BFH00]{BFH-1}
Mladen Bestvina, Mark Feighn, and Michael Handel, \emph{The {T}its alternative
  for {${\rm Out}(F_n)$}. {I}. {D}ynamics of exponentially-growing
  automorphisms}, Ann. of Math. (2) \textbf{151} (2000), no.~2, 517--623.
  \MR{1765705}

\bibitem[BFH05]{BFH-2}
\bysame, \emph{The {T}its alternative for {${\rm Out}(F_n)$}. {II}. {A}
  {K}olchin type theorem}, Ann. of Math. (2) \textbf{161} (2005), no.~1, 1--59.
  \MR{2150382}

\bibitem[BH21]{Bou-Houd}
R.~Boutonnet and C.~Houdayer, \emph{Stationary characters on lattices of
  semisimple {L}ie groups}, Publ. Math. Inst. Hautes \'{E}tudes Sci.
  \textbf{133} (2021), 1--46.

\bibitem[BKKO17]{BKKO}
Emmanuel Breuillard, Mehrdad Kalantar, Matthew Kennedy, and Narutaka Ozawa,
  \emph{{$C^*$}-simplicity and the unique trace property for discrete groups},
  Publ. Math. Inst. Hautes \'{E}tudes Sci. \textbf{126} (2017), 35--71.
  \MR{3735864}

\bibitem[Bou71]{Bourbaki-top-gen}
N.~Bourbaki, \emph{\'{E}l\'{e}ments de math\'{e}matique. {T}opologie
  g\'{e}n\'{e}rale. {C}hapitres 1 \`a 4}, Hermann, Paris, 1971. \MR{358652}

\bibitem[BS06]{Bader-Shalom}
Uri Bader and Yehuda Shalom, \emph{Factor and normal subgroup theorems for
  lattices in products of groups}, Invent. Math. \textbf{163} (2006), no.~2,
  415--454. \MR{2207022}

\bibitem[Can11]{Cantat}
Serge Cantat, \emph{Sur les groupes de transformations birationnelles des
  surfaces}, Ann. of Math. (2) \textbf{174} (2011), no.~1, 299--340.
  \MR{2811600}

\bibitem[Cor06]{Cornulier-doublecoset}
Yves Cornulier, \emph{Finitely presented wreath products and double coset
  decompositions}, Geom. Dedicata \textbf{122} (2006), 89--108. \MR{2295543}

\bibitem[dlH07]{dlHarpe}
Pierre de~la Harpe, \emph{On simplicity of reduced {$C^\ast$}-algebras of
  groups}, Bull. Lond. Math. Soc. \textbf{39} (2007), no.~1, 1--26.
  \MR{2303514}

\bibitem[EJZ13]{EJ-PWD}
Mikhail Ershov and Andrei Jaikin-Zapirain, \emph{Groups of positive weighted
  deficiency and their applications}, J. Reine Angew. Math. \textbf{677}
  (2013), 71--134. \MR{3039774}

\bibitem[Ele18]{Elek}
G\'{a}bor Elek, \emph{Uniformly recurrent subgroups and simple
  {$C^*$}-algebras}, J. Funct. Anal. \textbf{274} (2018), no.~6, 1657--1689.
  \MR{3758545}

\bibitem[FG23]{Fra-Gel}
Mikolaj Fraczyk and Tsachik Gelander, \emph{Infinite volume and infinite
  injectivity radius}, Ann. of Math. (2) \textbf{197} (2023), no.~1, 389--421.
  \MR{4515682}

\bibitem[Fur73]{Furst-1973}
Harry Furstenberg, \emph{Boundary theory and stochastic processes on
  homogeneous spaces}, Harmonic analysis on homogeneous spaces ({P}roc.
  {S}ympos. {P}ure {M}ath., {V}ol. {XXVI}, {W}illiams {C}oll., {W}illiamstown,
  {M}ass., 1972), Proc. Sympos. Pure Math., Vol. XXVI, Amer. Math. Soc.,
  Providence, RI, 1973, pp.~193--229. \MR{352328}

\bibitem[Fur03]{Furman-min-sp}
Alex Furman, \emph{On minimal strongly proximal actions of locally compact
  groups}, Israel J. Math. \textbf{136} (2003), 173--187. \MR{1998109}

\bibitem[Gla75]{Glasner-compress}
Shmuel Glasner, \emph{Compressibility properties in topological dynamics},
  Amer. J. Math. \textbf{97} (1975), 148--171. \MR{365537}

\bibitem[Gla76]{Glasner-proxflows}
\bysame, \emph{Proximal flows}, Lecture Notes in Mathematics, Vol. 517,
  Springer-Verlag, Berlin-New York, 1976. \MR{0474243}

\bibitem[Gri11]{Grigorchuk-survey}
R.~I. Grigorchuk, \emph{Some problems of the dynamics of group actions on
  rooted trees}, Tr. Mat. Inst. Steklova \textbf{273} (2011), no.~Sovremennye
  Problemy Matematiki, 72--191. \MR{2893544}

\bibitem[Gro87]{Gromov-hyp}
M.~Gromov, \emph{Hyperbolic groups}, Essays in group theory, Math. Sci. Res.
  Inst. Publ., vol.~8, Springer, New York, 1987, pp.~75--263. \MR{919829}

\bibitem[GW15]{GW-urs}
Eli Glasner and Benjamin Weiss, \emph{Uniformly recurrent subgroups}, Recent
  trends in ergodic theory and dynamical systems, Contemp. Math., vol. 631,
  Amer. Math. Soc., Providence, RI, 2015, pp.~63--75. \MR{3330338}

\bibitem[Hal49]{Hall-coset-rep}
Marshall Hall, Jr., \emph{Coset representations in free groups}, Trans. Amer.
  Math. Soc. \textbf{67} (1949), 421--432. \MR{32642}

\bibitem[Hor14]{Horbez}
Camille Horbez, \emph{The {T}its alternative for the automorphism group of a
  free product}, arXiv:1408.0546.

\bibitem[Ken20]{Kennedy}
Matthew Kennedy, \emph{An intrinsic characterization of {$C^*$}-simplicity},
  Ann. Sci. \'{E}c. Norm. Sup\'{e}r. (4) \textbf{53} (2020), no.~5, 1105--1119.
  \MR{4174855}

\bibitem[KK17]{KK}
Mehrdad Kalantar and Matthew Kennedy, \emph{Boundaries of reduced
  {$C^*$}-algebras of discrete groups}, J. Reine Angew. Math. \textbf{727}
  (2017), 247--267. \MR{3652252}

\bibitem[Kur28]{Kuratowski1928}
Casimir Kuratowski, \emph{Sur les d\'ecompositions semi-continues d'espaces
  m\'etriques compacts}, Fundamenta Mathematicae \textbf{11} (1928), no.~1,
  169--185 (fre).

\bibitem[Lam01]{Lamy}
St\'{e}phane Lamy, \emph{L'alternative de {T}its pour {${\rm Aut}[{\Bbb
  C}^2]$}}, J. Algebra \textbf{239} (2001), no.~2, 413--437. \MR{1832900}

\bibitem[LB17]{LB-C*simple}
Adrien Le~Boudec, \emph{{$C^*$}-simplicity and the amenable radical}, Invent.
  Math. \textbf{209} (2017), no.~1, 159--174. \MR{3660307}

\bibitem[LBMB18]{LBMB-subdyn}
Adrien Le~Boudec and Nicol\'{a}s Matte~Bon, \emph{Subgroup dynamics and
  {$C^*$}-simplicity of groups of homeomorphisms}, Ann. Sci. \'{E}c. Norm.
  Sup\'{e}r. (4) \textbf{51} (2018), no.~3, 557--602. \MR{3831032}

\bibitem[LBMB22]{LBMB-growth-solv}
A.~Le~Boudec and N.~Matte~Bon, \emph{Growth of actions of solvable groups},
  arXiv:2205.11924 (2022).

\bibitem[MB17]{MatteBon-bounded-aut}
Nicol\'{a}s Matte~Bon, \emph{Full groups of bounded automaton groups}, J.
  Fractal Geom. \textbf{4} (2017), no.~4, 425--458. \MR{3735459}

\bibitem[MBT20]{MB-Tsank}
Nicol\'{a}s Matte~Bon and Todor Tsankov, \emph{Realizing uniformly recurrent
  subgroups}, Ergodic Theory Dynam. Systems \textbf{40} (2020), no.~2,
  478--489. \MR{4048302}

\bibitem[Nek20]{Nek-subst}
Volodymyr Nekrashevych, \emph{Substitutional subshifts and growth of groups},
  arXiv:2008.04983.

\bibitem[Neu37]{Neum-37}
B.~H. Neumann, \emph{Some remarks on infinite groups}, J. Lond. Math. Soc.
  \textbf{12} (1937), 120--127 (English).

\bibitem[Rau20]{Raum-Bourbaki}
Sven Raum, \emph{{$C^\ast$}-simplicity [after {B}reuillard, {H}aagerup,
  {K}alantar, {K}ennedy and {O}zawa]}, Ast\'{e}risque (2020), no.~422,
  S\'{e}minaire Bourbaki. Vol. 2018/2019. Expos\'{e}s 1151--1165, Exp. No.
  1156, 225--252. \MR{4224636}

\bibitem[SW05]{Sageev-Wise}
Michah Sageev and Daniel~T. Wise, \emph{The {T}its alternative for {${\rm
  CAT}(0)$} cubical complexes}, Bull. London Math. Soc. \textbf{37} (2005),
  no.~5, 706--710. \MR{2164832}

\bibitem[Tit72]{Tits72}
J.~Tits, \emph{Free subgroups in linear groups}, J. Algebra \textbf{20} (1972),
  250--270. \MR{286898}

\bibitem[Ure21]{Urech}
Christian Urech, \emph{Subgroups of elliptic elements of the {C}remona group},
  J. Reine Angew. Math. \textbf{770} (2021), 27--57. \MR{4193461}

\bibitem[Wil98]{Wilson-book}
John~S. Wilson, \emph{Profinite groups}, London Mathematical Society
  Monographs. New Series, vol.~19, The Clarendon Press, Oxford University
  Press, New York, 1998. \MR{1691054}

\end{thebibliography}

\end{document}